\newtheorem{theorem}{Theorem}[section]
\newtheorem{lemme}{Lemma}[section]
\newtheorem{proposition}{Proposition}[section]
\newtheorem{Rem}{Remark}[section]
\newtheorem{hyp}{Assumption}[section]
\newtheorem{cor}{Corollary}[section]
\newtheorem{defi}{Definition}[section]
\newcommand*\samethanks[1][\value{footnote}]{\footnotemark[#1]}
\newcommand{\ind}[1]{\mathds 1_{#1}}
\newcommand{\Gfrac}[2]{\genfrac{}{}{0pt}{2}{#1}{#2}}
\newcommand{\E}{\mathbb E }
\newcommand{\bb}{\mathbb}
\newcommand{\smax}{\vee}
\newcommand{\smin}{\wedge}
\newcommand{\cali}{\mathcal}
\newcommand{\ut}{\partial_t u}
\newcommand{\ux}{\partial_\nu u}
\newcommand{\utt}{\partial^2_{tt} u}
\newcommand{\uxx}{\partial^2_{\nu\nu} u}
\newcommand{\utx}{\partial^2_{t\nu} u}
\newcommand{\gt}{\partial_t g}
\newcommand{\gnu}{\partial_\nu g}
\newcommand{\gx}{\partial_\nu g}
\begin{document}

\title{\textbf{Thinning and Multilevel Monte Carlo for 
Piecewise Deterministic (Markov) Processes.\\
Application to a 
 stochastic Morris-Lecar model.}}
\author{\textbf{Vincent Lemaire}\thanks{Laboratoire de Probabilités, Statistique et Mod\'elisation (LPSM), UMR CNRS 8001, Sorbonne Universit\'e-Campus Pierre et Marie Curie, Case 158, 4 place Jussieu, F-75252 Paris Cedex 5, France} \footnote{vincent.lemaire@upmc.fr}\hspace{1cm}\textbf{Michèle Thieullen}\samethanks[1] \footnote{michele.thieullen@upmc.fr}  \hspace{1cm}\textbf{Nicolas Thomas}\samethanks[1] \footnote{nicolas.thomas@upmc.fr}}

\date{}

\maketitle

\begin{abstract}
\noindent
In the first part of this paper we study approximations of trajectories of Piecewise Deterministic Processes (PDP) when the flow is not explicit by the thinning method. We also establish a strong error estimate for PDPs as well as a weak error expansion for Piecewise Deterministic Markov Processes (PDMP). These estimates are the building blocks of the Multilevel Monte Carlo (MLMC) method which we study in the second part. The coupling required by MLMC is based on the thinning procedure. In the third part we apply these results to a 2-dimensional Morris-Lecar model with stochastic ion channels. In the range of our simulations the MLMC estimator outperforms the classical Monte Carlo one.
\end{abstract}

\smallskip \noindent \textbf{Keywords:} Piecewise Deterministic (Markov) Processes, Multilevel Monte Carlo, Thinning, Strong error estimate, Weak error expansion, Morris-Lecar model.

\smallskip \noindent \textbf{Mathematics Subject Classification:} 65C05, 65C20, 60G55, 60J25, 68U20

\section{Introduction}

In this paper we are interested in the approximation of the trajectories of PDPs. We establish strong error estimates for a PDP and a weak error expansion for a PDMP. Then we study  the application of the Multilevel Monte Carlo (MLMC) method in order to approximate expectations of functional of PDMPs. Our motivation comes from Neuroscience where the whole class of stochastic conductance-based neuron models can be interpreted as PDMPs. The response of a neuron to a stimulus, called neural coding, is considered as a relevant information to understand the functional properties of such excitable cells. Thus many quantities of interest such as mean first spike latency, mean interspike intervals and mean firing rate can be modelled as expectations of functionals of PDMPs.

\bigskip

\noindent PDPs have been introduced by Davis in \cite{davarticle} as a general class of stochastic processes characterized by a deterministic evolution between two successive random times. In the case where the deterministic evolution part follows a family of Ordinary Differential Equations (ODEs) the corresponding PDP enjoys the Markov property and is called a PDMP. The distribution of a PDMP is thus determined by three parameters called  the characteristics of the PDMP: a family of vector fields, a jump rate (intensity function) and a transition measure. 

\noindent We consider first a general PDP $(x_t)$ which is not necessarily Markov on a finite time interval $[0,T]$ for which the flow is not explicitly solvable. Approximating its flows by the classical Euler scheme and using our previous work \cite{thinPDMP}, we build a thinning algorithm which provides us with an exact simulation of an approximation of $(x_t)$ that we denote $(\overline x_t)$. The process $(\overline x_t)$ is a PDP constructed by thinning of a homogeneous Poisson process which enjoys explicitly solvable flows.  
 
\noindent Actually this thinning construction provides a whole family of approximations indexed by the time step $h>0$ of the Euler scheme. We prove that for any real valued smooth function $F$ the following strong estimate holds

\begin{equation}\label{hyp_strong}
\exists\, \,  V_1>0, V_2>0,
\hspace{0.5cm}  
\E[ |F(\overline x_T) - F(x_T)|^2 ] \leq V_1 h + V_2 h^2.
\end{equation}
Moreover if $(x_t)$ is a PDMP the following weak error expansion holds
\begin{equation}\label{hyp_weak}
\exists\, \,  c_1>0,
\hspace{0.5cm}  
\E[ F(\overline x_T)]  - \E[ F(x_T) ] = c_1 h +o(h^{2}).
\end{equation}

\noindent The estimate \eqref{hyp_strong} is mainly based on the construction of the couple $(x_t, \overline x_t)$ and on the fact that the Euler scheme is of order 1 this is why it is valid for a general PDP and its Euler scheme. On the contrary, the estimate \eqref{hyp_weak} relies on properties which are specific to PDMPs such as the Feynman-Kac formula. 

\bigskip \noindent The MLMC method relies simultaneously on estimates \eqref{hyp_strong} and \eqref{hyp_weak} that is why we study its application to the PDMP framework instead of the more general PDP one.
MLMC extends the classical Monte Carlo (MC) method which is a very general approach to estimate expectations using stochastic simulations. The complexity (i.e the number of operations necessary in the simulation) associated to a MC estimation can be prohibitive especially when the complexity of an individual random sample is very high. 
MLMC relies on repeated independent random samplings taken on different levels of accuracy which differs from the classical MC method.
MLMC can then greatly reduces the complexity of the classical MC by performing most simulations with low accuracy  but with low complexity and only few simulations with high accuracy at high complexity. MLMC have been introduced by S. Heinrich in \cite{Hei01} and developed by M. Giles in \cite{Gi08}.
The MLMC estimator has been efficiently used in various fields of numerical probability such as SDEs \cite{Gi08}, Markov chains \cite{AnHi12}, \cite{AnHiSu14}, \cite{GlRhot14}, Lévy processes \cite{kyp2014}, jump diffusions \cite{Xia}, \cite{Der11}, \cite{DH11} or nested Monte Carlo \cite{LP}, \cite{Giorgi}. See \cite{giles_2015} for more references. To the best of our knowledge, application of MLMC to PDMPs has not been considered. 

\bigskip
\noindent For the sake of clarity, we describe here the general improvement of MLMC. We are interested in the estimation of $\E[X]$ where $X$ is a real valued square integrable random variable on a probability space $\left( \Omega,\cali F, \bb P \right)$. 
When $X$ can be simulated exactly the classical MC estimator $(1/N)\sum_{k=1}^N X^k$ with $X^k, k \geq 1$ independent random variables identically distributed as $X$, provides an unbiased estimator. The associated L$^2$ - error satisfies $\parallel Y-\E[X] \parallel_2^2 = \text{Var}(Y)=\frac{1}{N} \text{Var}(X)$. If we quantify the precision by the L$^2$ - error, then a user-prescribed precision $\epsilon^2 > 0$ is achieved for $N =O(\epsilon^{-2})$ so that in this case the global complexity is of order $O(\epsilon^{-2})$.

\noindent Assume now that $X$ cannot be simulated exactly (or cannot be simulated at a reasonable cost) and that we can build a family of real valued random variables $(X_h, h > 0)$ on $\left( \Omega,\cali F, \bb P \right)$ which converges weakly and strongly to $X$ as $h \rightarrow 0$ in the following sense

\begin{equation}\label{hyp_weak_general}
\exists\, \,  c_1>0, \alpha >0,
\hspace{0.5cm}  
\E[ X_h]  - \E[ X ] = c_1 h^\alpha +o(h^{2\alpha}),
\end{equation}
and
\begin{equation}\label{hyp_strong_general}
\exists\, \,  V_1>0, \beta >0,
\hspace{0.5cm}  
\E[ |X_h - X|^2 ] \leq V_1 h^\beta.
\end{equation}

\noindent Assume moreover that for $h > 0$ the random variable $X_h$ can be simulated at a reasonable complexity (the complexity increases as $h \rightarrow 0$). The classical MC estimator now consists in a sequence of random variables
\begin{equation}\label{generic_MC_estimator}
Y = \frac{1}{N} \sum_{k=1}^N X^k_h,
\end{equation}

\noindent where $X^k_h, k\geq 1$ are independent random variables identically distributed as $X_h$. The bias and the variance of the estimator \eqref{generic_MC_estimator} are respectively given by $\E[Y] - \E[X]=\E[X_h] - \E[X]\simeq c_1 h^\alpha$ and $\text{Var}(Y)=\frac{1}{N} \text{Var}(X_h)$. From the strong estimate \eqref{hyp_strong_general} we have that $\text{Var}(X_h) \rightarrow \text{Var}(X)$ as $h \rightarrow 0$ so that $\text{Var}(X_h)$ is asymptotically a constant independent of $h$. If as above we quantify the precision by the L$^2$ - error and use that $\parallel Y-\E[X] \parallel_2^2 = (\E[Y] - \E[X])^2 + \text{Var}(Y)$, we obtain that the estimator \eqref{generic_MC_estimator} achieves a user-prescribed precision $\epsilon^2 > 0$ for $h = O(\epsilon^{1/\alpha})$ and $N =O(\epsilon^{-2})$ so that the global complexity of the estimator is now $O(\epsilon^{-2- \frac{1}{\alpha}})$. 

\noindent 
\noindent The MLMC method takes advantage of the estimate \eqref{hyp_strong_general} in order to reduce the global complexity. Let us fix $L \geq 2$ and consider for $l \in \{1,\ldots,L\}$ a geometrically decreasing sequence $(h_l,1 \leq l \leq L)$ where $h_l = h^* M^{-(l-1)}$ for fixed $h^* >0$ and $M>1$. The indexes $l$ are called the levels of the MLMC and the complexity of $X_{h_l}$ increases as the level increases. Thanks to the weak expansion \eqref{hyp_weak_general}, the quantity $\E [X_{h_L}]$ approximates $\E [X]$.
Using the linearity of the expectation the quantity $\E [X_{h_L}]$ can be decomposed over the levels $l \in \{1,\ldots,L \}$ as follows 
\begin{equation}\label{expectation_over_levels}
\E [X_{h_L}] = \E [X_{h^*}] + \sum_{l=2}^L \E [X_{h_l} - X_{h_{l-1}}].
\end{equation} 

\noindent For each level $l \in \{1,\ldots,L\}$, a classical MC estimator is used to approximate $\E [X_{h_l} - X_{h_{l-1}}]$ and $\E [X_{h^*}]$. At each level, a number $N_l \geq 1$ of samples are required and the key point is that the random variables $X_{h_l}$ and $X_{h_{l-1}}$ are assumed to be correlated in order to make the variance of $X_{h_l} - X_{h_{l-1}}$ small. Considering at each level $l=2,\ldots,L$ independent couples $(X_{h_l},X_{h_{l-1}})$ of correlated random variables, the MLMC estimator then reads
\begin{equation}\label{generic_MLMC_estimator}
Y = \frac{1}{N_1} \sum_{k=1}^{N_1} X_{h^*}^{k} + \sum_{l=2}^L \frac{1}{N_l} \sum_{k=1}^{N_l} (X_{h_l}^{k} - X_{h_{l-1}}^{k}),
\end{equation}

\noindent where $(X_{h^*}^{k}, k \geq 1)$ is a sequence of independent and identically distributed random variables distributed as $X_{h^*}$ and $\left( (X^{k}_{h_l},X^{k}_{h_{l-1}}), k \geq 1\right)$ for $l=2,\ldots,L$ are independent sequences of independent copies of $(X_{h_l},X_{h_{l-1}})$ and independent of $(X_{h^*}^{k})$. It is known, see \cite{Gi08} or \cite{LP}, that given a precision $\epsilon > 0$ and provided that the family $(X_h,h > 0)$ satisfies the strong and weak error estimates \eqref{hyp_strong_general} and \eqref{hyp_weak_general}, the multilevel estimator \eqref{generic_MLMC_estimator} achieves a precision $\parallel Y-\E[X] \parallel_2^2 =\epsilon^2$ with a global complexity of order $O(\epsilon^{-2})$ if $\beta > 1$, $O(\epsilon^{-2}(\log(\epsilon))^2)$ if $\beta = 1$ and $O(\epsilon^{-2 - (1-\beta)/\alpha})$ if $\beta < 1$. This complexity result shows the importance of the parameter $\beta$. Finally, let us mention that in the case $\beta > 1$ it possible to build an unbiased multilevel estimator, see \cite{RhGl15}.

\bigskip \noindent Estimates \eqref{hyp_strong} and \eqref{hyp_weak} suggest to investigate the use of the MLMC method in the PDMP framework with $\beta=1$ and $\alpha =1$. Letting $X = F(x_T)$ and $X_h = F(\overline x_T)$ for $h > 0$ and $F$ a smooth function, we define a MLMC estimator of $\E [F(x_T)]$ just as in \eqref{generic_MLMC_estimator} (noted $Y^{\text{MLMC}}$ in the paper) where the processes involved at the level $l$ are correlated by thinning. Since these processes are constructed using two different time steps, the probability of accepting a proposed jump time differs from one process to the other. Moreover the discrete components of the post-jump locations may also be different. This results in the presence of the term $V_1 h$ in the estimate \eqref{hyp_strong}.
In order to improve the convergence rate (to increase the parameter $\beta$) in \eqref{hyp_strong}, we show that for a given PDMP $(x_t)$ we have the following auxiliary representation
\begin{equation}\label{aux_representation}
\E [F(x_T)] = \E [F(\tilde x_T) \tilde R_T].
\end{equation}

\noindent The PDMP $(\tilde x_t)$ and its Euler scheme are such that their discrete components jump at the same times and in the same state. 
$(\tilde R_t)$ is a process which depends on $(\tilde x_t, t \in [0,T])$. The representation \eqref{aux_representation} is inspired by the change of probability introduced in \cite{Xia} and is actually valid for a general PDP (Proposition \ref{DECOMP2}) so that $\E [F(\overline x_T)] = \E [F(\underline{\tilde x}_T) \underline{\tilde R}_T]$ where $(\underline{\tilde x}_t)$ is the Euler scheme corresponding to $(\tilde x_t)$ and $(\underline{\tilde R}_t)$ is a process which depends on $(\underline{\tilde x}_t, t \in [0,T])$. Letting $X=F(\tilde x_T) \tilde R_T$ and $X_h=F(\underline{\tilde x}_T) \underline{\tilde R}_T$ we define a second MLMC estimator (noted $\tilde Y^{\text{MLMC}}$) where now the discrete components of the Euler schemes $(\underline{\tilde x}_t)$ involved at the level $l$ always jump in the same states and at the same times. To sum up, the first MLMC estimator we consider ($ Y^{\text{MLMC}}$) derives from \eqref{expectation_over_levels} where the corrective term at level $l$ is $\E[F(\overline x_T^{h_l}) - F(\overline x_T^{h_{l-1}})]$ whereas the corrective term of the second estimator ($\tilde Y^{\text{MLMC}}$) is $\E[F(\underline{\tilde x}_T^{h_l})\underline{\tilde R}_T^{h_l} - F(\underline{\tilde x}_T^{h_{l-1}})\underline{\tilde R}_T^{h_{l-1}}]$. For readability, we no longer write the dependence of the approximations on the time step.
For the processes $(F(\underline{\tilde x}_t) \underline{\tilde R}_t)$ and $(F(\tilde x_t) \tilde R_t$) we show the following strong estimate  
\begin{equation*}
\exists\, \,  \tilde V_1>0, 
\hspace{0.5cm}  
\E[ |F(\underline{\tilde x}_T) \underline{\tilde R}_T - F(\tilde x_T) \tilde R_T|^2 ] \leq \tilde V_1 h^2,
\end{equation*}

\noindent so that we end up with $\beta=2$ and the complexity goes from a $O(\epsilon^{-2}(\log(\epsilon))^2)$ to a $O(\epsilon^{-2})$.

\bigskip \noindent As an application we consider the PDMP version of the 2-dimensional Morris-Lecar model, see \cite{wainfluid}, which takes into account the precise description of the ionic channels and in which the flows are not explicit. Let us mention \cite{benaim2012} for the application of quantitative bounds for the long time behavior of PDMPs to a stochastic 3-dimensional Morris-Lecar model. The original deterministic Morris-Lecar model has been introduced in \cite{lecar} to account for various oscillating states in the barnacle giant muscle fiber. Because of its low dimension, this model is among the favourite conductance-based models in computational Neuroscience. Furthermore, this model is particularly interesting because it reproduces some of the main features of excitable cells response such as the shape, amplitude and threshold of the action potential, the refractory period. 
We compare the classical MC and the MLMC estimators on the 2-dimensional stochastic Morris-Lecar model to estimate the mean value of the membrane potential at fixed time. It turns out that in the range of our simulations the MLMC estimator outperforms the MC one. It suggests that MLMC estimators can be used successfully in the framework of PDMPs. 

\noindent  As mentioned above, the quantities of interest such as mean first spike latency, mean interspike intervals and mean firing rate can be modelled as expectations of path-dependent functional of PDMPs. This setting can then be considered as a natural extension of this work. 

\bigskip \noindent The paper is organised as follows. In section 2, we construct a general PDP by thinning and we give a representation of its distribution in term of the thinning data (Proposition 1). In section 3, we establish strong error estimates (Theorems 1-2). In section 4, we establish a weak error expansion (Theorem 3). In section 5, we compare the efficiency of the classical and the multilevel Monte Carlo estimators on the 2-dimensional stochastic Morris-Lecar model.

\section{Piecewise Deterministic Process by thinning}\label{section_PDP}
\subsection{Construction}\label{construction}

In this section we introduce the setting and recall some results on the thinning method from our previous paper \cite{thinPDMP}.
Let $E := \Theta \times \bb R^d$ where $\Theta$ is a finite or countable set and $d \geq 1$. A piecewise deterministic process (PDP) is defined from the following characteristics 
\begin{itemize}
\item a family of functions $\left( \Phi_\theta \right)_{\theta \in \Theta}$ such that $\Phi_\theta : \bb R_+ \times \bb R^d \rightarrow \bb R^d$ for all $\theta \in \Theta$,

\item a measurable function $\lambda: E \rightarrow ]0,+\infty[$,

\item a transition measure $Q : E \times \cali B(E) \rightarrow [0,1]$.

\end{itemize}

\noindent We denote by $x = (\theta,\nu) $ a generic element of $E$. We only consider PDPs with continuous $\nu$-component so that for  $A \in \cali B(\Theta)$ and $B \in \cali B(\bb R^d)$, we write
\begin{equation}\label{Q_extension}
Q(x, A \times B) = Q(x, A) \delta_\nu(B).
\end{equation}

\noindent If we write $x=(\theta_x,\nu_x)$, then it holds that 
\begin{equation*} 
Q((\theta_x,\Phi_{\theta_x}(t,\nu_x)),d\theta d\nu)=Q((\theta_x,\Phi_{\theta_x}(t,\nu_x)),d\theta)\delta_{\Phi_{\theta_x}(t,\nu_x)}(d\nu).
\end{equation*}

\noindent Our results do not depend on the dimension of the variable in $\bb R^d$ so we restrict ourself to $\bb R$ ($d=1$) for the readability. We work under the following assumption

\begin{hyp}\label{hyp_lambda1}
There exists $\lambda^* < +\infty$ such that, for all $x \in E$, $ \lambda(x) \leq \lambda^*$.
\end{hyp}

\noindent In \cite{thinPDMP} we considered a general upper bound $\lambda^*$. In the present paper $\lambda^*$ is constant (see Assumption \ref{hyp_lambda1}). Let $\left( \Omega,\cali F,\bb P \right)$ be a probability space on which we define 

\begin{enumerate}
\item an homogeneous Poisson process $(N^*_t,t \geq 0)$ with intensity $\lambda^*$ (given in Assumption \ref{hyp_lambda1}) whose successive jump times are denoted $\left(T^*_k, k \geq 1 \right)$. We set $T^*_0 =0$.

\item two sequences of iid random variables with uniform distribution on $[0,1]$, $(U_k, k \geq 1)$ and $(V_k, k \geq 1)$ independent of each other and independent of $\left( T^*_k, k \geq 1 \right)$.
\end{enumerate}

\noindent  Given $T>0$ we construct iteratively the sequence of jump times and post-jump locations $(T_n,(\theta_n,\nu_n), n \geq 0)$ of the $E$-valued PDP $(x_t, t \in [0,T])$
 that we want to obtain in the end using its characteristics $\left( \Phi, \lambda, Q \right)$. 
Let  $(\theta_0,\nu_0) \in E$ be fixed and let $T_0 = 0$. We construct $T_1$ by thinning of $(T^*_k)$, that is
\begin{equation}\label{T1}
T_1 := T^*_{\tau_1},
\end{equation} 

\noindent where
\begin{equation}\label{tau1}
\tau_1 := \inf \left\{ k > 0: U_k \lambda^* \leq \lambda( \theta_0, \Phi_{\theta_0}( T^*_k,\nu_0))\right\}.
\end{equation}

\noindent We denote by $|\Theta|$ the cardinal of $\Theta$ (which may be infinite) and we set $\Theta = \{k_1,\ldots,k_{|\Theta|}\}$. For $j \in \{ 1,\ldots,|\Theta| \}$ we introduce the functions $a_j$ defined on $E$ by
\begin{equation}\label{aj}
a_j(x) := \sum_{i=1}^j Q(x,\{k_i\}),
\hspace{0.5cm}
\forall x \in E.
\end{equation} 

\noindent By convention, we set $a_0 := 0$.  We also introduce the function $H$ defined by
\begin{equation*}
H(x,u) := \sum_{i=1}^{|\Theta|} k_i \ind{ a_{i-1}(x) < u \leq a_i(x)},
\hspace{0.5cm}
\forall x \in E, \forall u \in [0,1].
\end{equation*}

\noindent For all $x \in E$, $H(x,.)$ is the inverse of the cumulative distribution function of $Q(x,.)$ (see for example \cite{dev}). Then, we construct $(\theta_1,\nu_1)$ from the uniform random variable $V_1$ and the function $H$ as follows
\begin{eqnarray}
(\theta_1,\nu_1)&=& 
\left( H\left( (\theta_{0},\Phi_{\theta_0}( T^*_{\tau_1},\nu_0)), V_1 \right), \phi_{\theta_0}( T^*_{\tau_1},\nu_0) \right),\nonumber\\
&=&\left( H\left( (\theta_{0},\Phi_{\theta_0}( T_{1},\nu_0)), V_1 \right), \phi_{\theta_0}( T_1,\nu_0) \right)\nonumber.
\end{eqnarray}

\noindent Thus, the distribution of $(\theta_1,\nu_1)$ given $\left( \tau_1, ( T^*_k)_{k \leq \tau_1} \right)$ is $Q((\theta_{0},\Phi_{\theta_0}( T^*_{\tau_1},\nu_0)),.)$ or in view of (\ref{Q_extension}),
\begin{equation*}
\sum_{k \in \Theta} Q \left( (\theta_{0},\Phi_{\theta_0}( T^*_{\tau_1},\nu_0)), \{k\} \right) \delta_{\left( k,\phi_{\theta_0}( T^*_{\tau_1},\nu_0) \right)}.
\end{equation*}

\noindent For $n>1$, assume that $\left(\tau_{n-1}, (T^*_k)_{k \leq \tau_{n-1}}, (\theta_{n-1},\nu_{n-1}) \right)$ is constructed. Then, we construct $T_n$ by thinning of $(T^*_k)$ conditionally to $\left(\tau_{n-1}, ( T^*_k)_{k \leq \tau_{n-1}}, (\theta_{n-1},\nu_{n-1}) \right)$ , that is
\begin{equation*}
T_n :=  T^*_{\tau_n},
\end{equation*} 

\noindent where
\begin{equation*}
\tau_n := \inf \left\{ k > \tau_{n-1}: U_k \lambda^* \leq \lambda(\theta_{n-1}, \Phi_{\theta_{n-1}}(T^*_k-T^*_{\tau_{n-1}},\nu_{n-1}) ) \right\}.
\end{equation*}

\noindent Then, we construct $(\theta_n,\nu_n)$ using the uniform random variable $V_n$ and the function $H$ as follows
\begin{eqnarray*}
(\theta_n,\nu_n)&:=& 
\left( H\left( (\theta_{n-1}, \Phi_{\theta_{n-1}}( T^*_{\tau_n} -  T^*_{\tau_{n-1}},\nu_{n-1})), V_n \right), \Phi_{\theta_{n-1}}( T^*_{\tau_n} -  T^*_{\tau_{n-1}},\nu_{n-1}) \right)\\
&=&\left( H\left( (\theta_{n-1}, \Phi_{\theta_{n-1}}( T_{n} -  T_{n-1},\nu_{n-1})), V_n \right), \Phi_{\theta_{n-1}}( T_n -  T_{n-1},\nu_{n-1}) \right)\nonumber.
\end{eqnarray*}

\noindent We define the PDP $x_t$ for all $t \in [0,T]$ from the process $(T_n,(\theta_n,\nu_n))$ by
\begin{equation}\label{PDMP}
x_t 
:= 
\left( \theta_{n}, \Phi_{\theta_{n}}(t-T_{n},\nu_{n}) \right),
\hspace{0.5cm}
t \in [T_n,T_{n+1}[.
\end{equation}

\noindent Thus, $x_{T_n}=(\theta_n,\nu_n)$ and $x^-_{T_n} = (\theta_{n-1},\nu_n)$. We also define the counting process associated to the jump times $N_t := \sum_{n \geq 1} \ind{T_n \leq t}$.

\subsection{Approximation of a PDP}
In applications we may not know explicitly the functions $\Phi_\theta$. In this case, we use a numerical scheme $\overline \Phi_\theta$ approximating $\Phi_\theta$. In this paper, we consider schemes such that there exits positive constants $C_1$ and $C_2$ independent of $h$ and $\theta$ such that 
\begin{equation}\label{error_flow}
\sup_{t\in{[0,T]}} | \Phi_\theta(t,\nu_1)-\overline \Phi_\theta(t,\nu_2) | \leq e^{C_1 T} |\nu_1-\nu_2| + C_2 h , 
\hspace{1cm}
\forall \theta \in \Theta, \forall (\nu_1, \nu_2) \in \bb R^2.
\end{equation}

\noindent To the family $(\overline \Phi_\theta)$ we can associate a PDP constructed as above that we denote $(\overline x_t)$. We emphasize that there is a positive probability that $(x_t)$ and $(\overline x_t)$ jump at different times and/or in different states even if they are both constructed from the same data $(N^*_t)$, $(U_k)$ and $(V_k)$. However if the characteristics $(\Phi,\tilde \lambda, \tilde Q)$ of a PDP $(\tilde x_t)$  are such that $\tilde \lambda$ and $\tilde Q$ depend only on $\theta$, that is $\tilde \lambda(x) = \tilde \lambda(\theta)$ and $\tilde Q(x,.) = \tilde Q(\theta,.)$ for all $x=(\theta,\nu) \in E$, then its embedded Markov chain $(\tilde T_n, (\tilde \theta_n,\tilde \nu_n), n\geq 0)$ is such that $(\tilde \theta_n, n\geq 0)$ is an autonomous Markov chain with kernel $\tilde Q$ and $(\tilde T_n, n\geq 0)$ is a counting process with intensity $\tilde \lambda_t = \sum_{n \geq 0} \tilde \lambda(\tilde \theta_n) \ind{\tilde T_n \leq t < \tilde T_{n+1}}$. In particular, both $(\tilde \theta_n)$ and $(\tilde \tau_n)$ do not depend on $\Phi$. The particular form of the characteristics $\tilde \lambda$ and $\tilde Q$ implies that the PDP $(\tilde x_t)$ and its approximation $(\underline{\tilde x_t})$ are correlated via the same process $(\tilde \tau_n,\tilde \theta_n)$. In other words, these processes always jump exactly at the same times and their $\theta$-component always jump in the same states. Such processes $(\tilde x_t)$ are easier theoretically as well as numerically than the general case. They will be useful for us in the sequel.

\bigskip \noindent The following lemma (which is important for several proofs below) gives a direct consequence of the estimate \eqref{error_flow}.
\begin{lemme}\label{strong_error_lemme2}
Let $(\Phi_\theta)$ and $(\overline \Phi_\theta)$ satisfying \eqref{error_flow}. Let $(t_n, n \geq 0)$ be an increasing sequence of non-negative real numbers with $t_0=0$ and let $(\alpha_n, n \geq 0)$ be a sequence of $\Theta$-valued components. For a given $\nu \in \bb R$ let us define iteratively the sequences $(\beta_n, n \geq 0)$ and $(\overline \beta_n, n \geq 0)$ as follows
\begin{equation*}
\left\{
\begin{array}{ll}
\beta_n = \Phi_{\alpha_{n-1}}(t_n-t_{n-1},\beta_{n-1}),\\
\beta_0=\nu,
\end{array}
\right.
\hspace{0.5cm}
\text{and}
\hspace{0.5cm}
\left\{
\begin{array}{ll}
\overline \beta_n = \overline \Phi_{\alpha_{n-1}}(t_n-t_{n-1},\overline \beta_{n-1}),\\
\overline \beta_0=\nu.
\end{array}
\right.
\end{equation*}

\noindent Then, for all $n \geq 1$ we have
\begin{equation*}
|\overline \beta_n - \beta_n|
\leq
e^{C_1 t_n} n C_2 h, 
\end{equation*}

\noindent where $C_1$ and $C_2$ are positive constants independent of $h$.
\end{lemme}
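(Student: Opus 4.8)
The plan is to prove the estimate by induction on $n$, exploiting the recursive definition of $\beta_n$ and $\overline\beta_n$ together with the flow estimate \eqref{error_flow}. The key observation is that the two sequences are built by applying, at each step, the flow $\Phi_{\alpha_{n-1}}$ (respectively its approximation $\overline\Phi_{\alpha_{n-1}}$) over the same time increment $t_n - t_{n-1}$ but starting from the two possibly different values $\beta_{n-1}$ and $\overline\beta_{n-1}$. This is exactly the situation handled by \eqref{error_flow}, which controls the deviation between exact and approximate flows both through an error propagated from the initial condition (with a Lipschitz-type factor $e^{C_1 T}$) and through a fresh one-step error of size $C_2 h$.

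First I would establish the base case. For $n=0$ both sequences equal $\nu$, so $|\overline\beta_0 - \beta_0| = 0$, consistent with the claimed bound. The genuinely useful base is $n=1$: here $\beta_1 = \Phi_{\alpha_0}(t_1, \nu)$ and $\overline\beta_1 = \overline\Phi_{\alpha_0}(t_1, \nu)$ start from the \emph{same} initial value $\nu$, so \eqref{error_flow} with $\nu_1 = \nu_2 = \nu$ gives immediately $|\overline\beta_1 - \beta_1| \leq C_2 h \leq e^{C_1 t_1} C_2 h$, which is the estimate for $n=1$.

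For the induction step, I would assume $|\overline\beta_{n-1} - \beta_{n-1}| \leq e^{C_1 t_{n-1}} (n-1) C_2 h$ and apply \eqref{error_flow} to the flow $\Phi_{\alpha_{n-1}}$ over the time increment $\tau := t_n - t_{n-1}$, with $\nu_1 = \beta_{n-1}$ and $\nu_2 = \overline\beta_{n-1}$. This yields
\begin{equation*}
|\overline\beta_n - \beta_n| = |\Phi_{\alpha_{n-1}}(\tau, \beta_{n-1}) - \overline\Phi_{\alpha_{n-1}}(\tau, \overline\beta_{n-1})| \leq e^{C_1 T} |\beta_{n-1} - \overline\beta_{n-1}| + C_2 h.
\end{equation*}
Here one point of care is that \eqref{error_flow} is stated with the factor $e^{C_1 T}$ (the supremum being taken over $t \in [0,T]$), so the Lipschitz constant for one step is $e^{C_1 T}$ rather than $e^{C_1 \tau}$; I would use that the increment $\tau$ lies in $[0,T]$ so the estimate applies with the uniform constant. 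Substituting the induction hypothesis gives
\begin{equation*}
|\overline\beta_n - \beta_n| \leq e^{C_1 T} e^{C_1 t_{n-1}} (n-1) C_2 h + C_2 h.
\end{equation*}

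The main obstacle is reconciling the bookkeeping of the exponential factors so that the final bound reads cleanly as $e^{C_1 t_n} n C_2 h$. The cleanest route is to observe that since $t_{n-1} \leq t_n \leq T$, one has both $e^{C_1 t_{n-1}} \leq e^{C_1 t_n}$ and $1 \leq e^{C_1 t_n}$, so the two terms above are each dominated by a multiple of $e^{C_1 t_n} C_2 h$; combining the coefficients $(n-1) + 1 = n$ then delivers $|\overline\beta_n - \beta_n| \leq e^{C_1 t_n} n C_2 h$, completing the induction. (Strictly, the exponential factor one obtains is of the form $e^{C_1 T}$-flavoured rather than $e^{C_1 t_n}$; I would either absorb the discrepancy into the statement's implicit convention that $C_1, C_2$ are constants independent of $h$, or restate the bound with $t_n$ replaced by the global horizon. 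This reconciliation of the per-step constant $e^{C_1 T}$ against the accumulated factor $e^{C_1 t_n}$ is the only subtle accounting step; everything else is a routine linear recursion.)
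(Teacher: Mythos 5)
Your induction step does not close, and the two repairs you sketch in the final parenthetical do not repair it. Using \eqref{error_flow} with the uniform factor $e^{C_1 T}$ at every step, the recursion $|\overline\beta_n - \beta_n| \leq e^{C_1 T}|\overline\beta_{n-1}-\beta_{n-1}| + C_2 h$ compounds multiplicatively: it yields $|\overline\beta_n - \beta_n| \leq C_2 h \sum_{k=0}^{n-1} e^{C_1 T k}$, which grows like $e^{C_1 T n}$, not like $n\,e^{C_1 t_n}$. Concretely, the inequality you need to finish the step, namely $e^{C_1 T} e^{C_1 t_{n-1}}(n-1)C_2h + C_2 h \leq e^{C_1 t_n} n C_2 h$, would require $T + t_{n-1}\leq t_n$, which fails whenever the increment $t_n-t_{n-1}$ is smaller than $T$ (the generic situation). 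The discrepancy is a factor of order $e^{C_1 T(n-1)}$ that grows with $n$, so it cannot be absorbed into constants independent of $h$ (they would then depend on $n$), and restating the bound with $T$ in place of $t_n$ does not help either, since $e^{C_1 T}\cdot e^{C_1 T}(n-1) + 1 \leq e^{C_1 T}\, n$ is still false. This matters downstream: in the proofs of Theorems \ref{thm_strong_cv} and \ref{thm_strong_cv_tilde} the lemma is applied with $n$ equal to the unbounded, random number of accepted jumps, and the linear-in-$n$ bound is precisely what lets the argument close with $\E[(N^*_T)^2]<+\infty$.

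The missing idea is to use the flow estimate in its time-increment-dependent form: for a step of length $t_k-t_{k-1}$ one has $|\overline\beta_k - \beta_k| \leq e^{C_1(t_k - t_{k-1})}|\overline\beta_{k-1}-\beta_{k-1}| + C_2 h$. This is the natural Gronwall/Euler-polygon bound of which \eqref{error_flow} is the supremum over $t\in[0,T]$; you correctly noticed that the displayed statement only shows the $e^{C_1 T}$ version, but the sharper per-step form is what the lemma actually requires, and it is what the paper invokes. With it, multiply by $e^{-C_1 t_k}$, note that $C_2 h\, e^{-C_1 t_k}\leq C_2 h$, and set $u_k = e^{-C_1 t_k}|\overline\beta_k - \beta_k|$; then $u_k \leq u_{k-1} + C_2 h$, the sum telescopes to $u_n \leq n C_2 h$ since $u_0=0$, and hence $|\overline\beta_n - \beta_n| \leq e^{C_1 t_n}\, n\, C_2 h$. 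The exponential factors then accumulate additively in the exponent to exactly $e^{C_1 t_n}$ instead of compounding to $e^{C_1 T n}$.
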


\begin{proof}[Proof of Lemma \ref{strong_error_lemme2}]
Let $n \geq 1$. 
From the estimate \eqref{error_flow}, we have for all $k \leq n$
\begin{equation*}
\begin{split}
\left| \overline \beta_k -  \beta_k \right|
&\leq
e^{C_1(t_k - t_{k-1})}|\overline \beta_{k-1} - \beta_{k-1}| + C_2 h,
\end{split}
\end{equation*}

\noindent and therefore
\begin{equation*}
\begin{split}
e^{- C_1 t_k} \left| \overline \beta_k -  \beta_k \right|
&\leq
e^{-C_1 t_{k-1}}|\overline \beta_{k-1} - \beta_{k-1}| + C_2 h.
\end{split}
\end{equation*}

\noindent By summing up these inequalities for $1 \leq k \leq n$ and since $\beta_0 = \overline \beta_0$ we obtain
\begin{equation*}
\left| \overline \beta_n -  \beta_n \right|
\leq 
e^{C_1 t_n} n C_2 h.
\end{equation*}
\end{proof}

\subsection{Application to the construction of a PDMP and its associated Euler scheme}\label{section_application}

In this section we define a PDMP and its associated Euler scheme from the construction of the section \ref{construction}. For all $\theta \in \Theta$, we consider a family of vector fields $(f_\theta, \theta \in \Theta)$ satisfying

\begin{hyp}\label{hyp_f_lip}
For all $\theta \in \Theta$, the function $f_\theta : \bb R \rightarrow \bb R$ is bounded and Lipschitz with constant $L$ independent of $\theta$.
\end{hyp}

\noindent If we choose $\Phi_\theta = \phi_\theta$ in the above construction where for all $x = (\theta,\nu) \in E$, we denote by $(\phi_\theta(t,\nu), t \geq 0)$ the unique solution of the ordinary differential equation (ODE)
\begin{equation}\label{ODE}
\left\{
\begin{array}{l}
\frac{d y(t)}{dt} = f_\theta \left( y(t) \right), \\
y(0) = \nu,
\end{array}
\right.
\end{equation}

\noindent
then the corresponding PDP is Markov since $\phi$ satisfies the semi-group property which reads $\phi_\theta(t+s,\nu) = \phi_\theta(t,\phi_\theta(s,\nu))$ for all $t,s \geq 0$ and for all $(\theta,\nu) \in E$. In this case, the process $(x_t)$ is a piecewise deterministic Markov process (see \cite{dav} or \cite{jacobsen}).

\bigskip \noindent Let $h > 0$. We approximate the solution of \eqref{ODE} by the Euler scheme with time step $h$. First, we define the Euler subdivision of $[0,+\infty[$ with time step $h$, noted $(\overline t_i, i \geq 0)$, by $\overline t_i := ih$. 

\noindent Then, for all $x = (\theta,\nu) \in E$, we define the sequence $(\overline y_i(x), i \geq 0)$, the classical Euler scheme, iteratively by
\begin{equation*}
\left\{
\begin{array}{l}
\overline y_{i+1}(x) = \overline y_i(x) + h f_\theta(\overline y_i(x)), \\
\overline y_0(x) = \nu,
\end{array}
\right.
\end{equation*}

\noindent to emphasize its dependence on the initial condition. Finally, for all $x = (\theta,\nu) \in E$, we set
\begin{equation}\label{continuous_euler_scheme}
\overline \phi_\theta(t,\nu)
:=
\overline y_i(x) + (t - \overline t_i) f_\theta(\overline y_i(x)),
\hspace{0.5cm}
\forall t \in [\overline t_i,\overline t_{i+1}].
\end{equation}

\noindent We construct the approximating process $(\overline x_t)$ as follows. Its continuous component starts from $\nu_0$ at time 0 and follows the flow $\overline \phi_{\theta_0}(t,\nu_0)$ until the first jump time $\overline T_1$ that we construct by \eqref{T1} and \eqref{tau1} of section \ref{construction} where we replace $\Phi_{\theta_0}( T^*_k,\nu_0)$ by $\overline \phi_{\theta_0}( T^*_k,\nu_0)$. At time $\overline T_1$ the continuous component of $\overline x_{\overline T_1}$ is equal to $\overline \phi_{\theta_0}( \overline T_1,\nu_0):=\overline \nu_1$ since there is no jump in the continuous component. The discrete component jumps to $\overline \theta_1$. We iterate this procedure with the new flow $\overline \phi_{\overline \theta_1}(t- \overline T_1,\overline \nu_1)$ until the next jump time $\overline T_2$ given by \eqref{T1} and \eqref{tau1} with $\overline \phi_{\overline \theta_1}( T^*_k-\overline T_1, \overline \nu_1)$ and so on. We proceed by iteration to construct $(\overline x_t)$ on $[0,T]$. 

\noindent Consequently, the discretisation grid for $(\overline x_t)$ on the interval $[0,T]$ 
is random and is formed by the points $\overline T_n + kh$ for $n = 0,\ldots, \overline N_T$ and $k = 0, \ldots, \lfloor (\overline T_{n+1} \smin T - \overline T_n)/h \rfloor$. This differs from the SDE case where the classical grid is fixed. 

\bigskip \noindent By classical results of numerical analysis (see \cite{ODE} for example), the continuous Euler scheme \eqref{continuous_euler_scheme} (also called Euler polygon) satisfies estimate \eqref{error_flow}. If we choose $\Phi_\theta = \overline \phi_\theta$ in the above construction then the corresponding PDP $(\overline x_t)$ is not Markov since the functions $\overline \phi_\theta(.,\nu)$ do not satisfy the semi-group property (see \cite{jacobsen}). 
\color{black}

\subsection{Thinning representation for the marginal distribution of a PDP}\label{section_distrib_representation}

\noindent The sequence $(T_n,(\theta_n,\nu_n), n\geq 0)$ is an $\bb R_+ \times E$-valued Markov chain with respect to its natural filtration $\cali F_n$ and with kernel $K$ defined by
\begin{equation}\label{PDMPkernel}
K\Big{(}(t,\theta,\nu),dudjdz\Big{)} 
:= 
\ind{u \geq t}\, \lambda(\theta,\Phi_\theta(u-t,\nu))e^{-\int_{0}^{u-t}\lambda(\theta,\Phi_\theta(s,\nu))ds} Q((\theta,\Phi_\theta(u-t,\nu)),djdz) du\,  .
\end{equation}

\noindent For $n \geq 0$, the law of the random variable $T_n-T_{n-1}$ given $\cali F_{n-1}$ admits the density given for $t \geq 0$ by 
\begin{equation}\label{interjumpdensity}
\lambda(\theta_{n-1},\Phi_{\theta_{n-1}}(t,\nu_{n-1})) e^{- \int_0^t \lambda(\theta_{n-1},\Phi(s,\nu_{n-1}))ds}.
\end{equation}

\noindent Classically the marginal distribution of $x_t$ is expressed using \eqref{PDMP}, the intensity $\lambda$ via (\ref{interjumpdensity}) and the kernel $K$ (see \eqref{PDMPkernel}). Indeed for fixed $x_0=x \in E$ and for any bounded measurable function $g$ we can write,
\begin{align}
\E \left[ g(x_t) \right] 
&= 
\sum_{n \geq 0} \E \left[ g( \theta_n, \Phi_{\theta_n}(t-T_n,\nu_n) ) \ind{N_t =n} \right] \nonumber\\
&= 
\sum_{n \geq 0} \E \left[ g( \theta_n,\Phi_{\theta_n}(t-T_n,\nu_n) ) \ind{T_n \leq t} \E [ \ind{T_{n+1}>t} | \cali F_n ] \right] \nonumber\\
&=
\sum_{n \geq 0} \E \left[ g(\theta_n, \Phi_{\theta_n}(t-T_n,\nu_n) ) \ind{T_n \leq t} e^{-\int_0^{t-T_n} \lambda(\theta_n, \Phi_{\theta_n}(u,\nu_n) ) du } \right] \nonumber
\\&= 
\sum_{n \geq 0} \int_0^t \int_E g(\theta, \Phi_\theta(t-s,\nu) ) e^{-\int_0^{t-s} \lambda( \theta, \Phi_\theta(u,\nu) ) du } K^n ((0,x),ds d\theta d\nu) \nonumber
\end{align}  

\noindent where $K^0 := \delta$ and $K^n = K \circ \ldots \circ K$ $n$ times, that is 
\begin{equation*}
\int_0^t \int_E K^n \left( (0,x),dsdy \right) 
= 
\int_0^t \int_E \int_{(\bb R_+ \times E)^{n-1}} K((0,x),dt_1 dy_1) \ldots K((t_{n-1},y_{n-1}),ds dy).
\end{equation*}

\noindent However since we have constructed $(x_t)$ by thinning, we would prefer to express the distribution of $x_t$ using the upper bound $\lambda^*$, the Poisson process $( N^*_t, t \geq 0)$ and the sequences $(U_k, k\in {\mathbb N})$, $(V_k, k\in {\mathbb N})$.

\begin{proposition}\label{marginaldistribution2}
Let $(x_t, t \in [0,T])$ be a PDP with characteristics $(\Phi,\lambda,Q)$ constructed in section \ref{construction} and let $n \in \bb N$. Then
\begin{align*}
\E[g(x_t)\ind{\{N_t=n\}}]
&=  
\sum_{1\leq p_1<p_2....<p_n\leq m}  \sum_{\theta\in \Theta} \E[Q(x^-_{T^*_{p_{n-1}}},\theta)\, g( \theta, \Phi_{\theta}(t-T^*_{p_n},\nu_n))\ind{\{\tau_i=p_i, 1\leq i\leq n, N^*_t=m\}}\\
&
\prod_{q=p_n+1}^{m}(1-\frac{\lambda(\theta,\Phi_{\theta}(T_{q}^*-T_{p_n}^*,\nu_n))}{\lambda^*})].
\end{align*}
\end{proposition}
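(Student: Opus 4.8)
The plan is to decompose the event $\{N_t = n\}$ according to the total number $N^*_t = m$ of points of the dominating Poisson process in $[0,t]$, and then according to which of these $m$ points are accepted by the thinning procedure. Since the $\tau_i$ are deterministic functionals of the data $((T^*_k),(U_k),(V_k))$, fixing their values partitions $\Omega$; on $\{N^*_t = m\}$ one has the disjoint decomposition
$$
\{N_t = n\} \cap \{N^*_t = m\} = \bigsqcup_{1 \leq p_1 < \cdots < p_n \leq m} \{\tau_i = p_i,\ 1 \leq i \leq n\} \cap \{\tau_{n+1} > m\} \cap \{N^*_t = m\},
$$
because $T_n = T^*_{p_n} \leq T^*_m \leq t$ holds automatically, while $T_{n+1} > t$ is equivalent to $\tau_{n+1} > m$. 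On each such event $x_t = (\theta_n, \Phi_{\theta_n}(t - T^*_{p_n},\nu_n))$ by \eqref{PDMP}, which produces the argument of $g$ in the statement. First I would write $\E[g(x_t)\ind{N_t=n}] = \sum_{m \geq n}\E[g(x_t)\ind{\{N_t=n,\,N^*_t=m\}}]$ and reduce the proof to evaluating one term of the partition above, the sum over $m$ being reconstituted at the end.

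The computation then integrates out two independent blocks of variables by successive conditioning. First I would rewrite $\ind{\tau_{n+1}>m} = \prod_{q=p_n+1}^m \ind{U_q\lambda^* > \lambda(\theta_n,\Phi_{\theta_n}(T^*_q - T^*_{p_n},\nu_n))}$, the event that none of the Poisson points of index $p_n+1,\ldots,m$ is accepted. Conditioning on the $\sigma$-field $\mathcal G_n$ generated by $(T^*_k)_{k\geq 1}$, $(U_k)_{k \leq p_n}$ and $(V_k)_{k\leq n}$, the quantities $\theta_n$, $\nu_n$, the indicator $\ind{\{\tau_i=p_i,\,i\leq n\}}$ and $\ind{\{N^*_t=m\}}$ are all $\mathcal G_n$-measurable, whereas $(U_q)_{q>p_n}$ are i.i.d.\ uniform and independent of $\mathcal G_n$. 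Since $0 \leq \lambda \leq \lambda^*$ by Assumption \ref{hyp_lambda1}, each factor has conditional expectation $1 - \lambda(\theta_n,\Phi_{\theta_n}(T^*_q - T^*_{p_n},\nu_n))/\lambda^*$, and the tower property produces exactly the product appearing in the statement.

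Second I would integrate out $V_n$. On $\{\tau_i=p_i,\,i\leq n\}$ the pre-jump state $x^-_{T_n}=(\theta_{n-1},\nu_n)$ is measurable with respect to $\mathcal H_n := \sigma\big((T^*_k),(U_k)_{k\leq p_n},(V_k)_{k\leq n-1}\big)$, and $\theta_n = H(x^-_{T_n},V_n)$ with $V_n$ uniform and independent of $\mathcal H_n$; since $H(x,\cdot)$ is the quantile function of $Q(x,\cdot)$, the conditional law of $\theta_n$ given $\mathcal H_n$ is $Q(x^-_{T_n},\cdot)$. Applying the tower property once more to the remaining $\theta_n$-dependent factors (the evaluation of $g$ and the product obtained above) replaces $\theta_n$ by a summation variable $\theta$ weighted by $Q(x^-_{T_n},\theta)$, that is the $Q$-factor of the statement evaluated at the pre-jump state $x^-_{T_n}=x^-_{T^*_{p_n}}$. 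Collecting the two steps and summing over $m\geq n$ and over $1 \leq p_1<\cdots<p_n\leq m$ yields the claimed identity.

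The main obstacle is the bookkeeping of measurability and independence at each conditioning step, in particular checking that on $\{\tau_i=p_i,\,i\leq n\}$ the quantities $\theta_n,\nu_n$ and the acceptance indicator depend only on the data indexed up to $p_n$ (together with $V_1,\ldots,V_{n-1}$), so that the blocks $(U_q)_{q>p_n}$ and $V_n$ are genuinely independent of the factors they multiply. Once this is in place the two tower-property computations are routine, using only that the $U_k$ are uniform and that $\lambda\leq\lambda^*$.
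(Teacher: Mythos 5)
Your proof is correct and follows essentially the same route as the paper's: the same decomposition over $\{N^*_t=m\}$ and the accepted indices $1\leq p_1<\cdots<p_n\leq m$, followed by the same two conditioning steps (integrating out the rejected $U_q$, $p_n<q\leq m$, against their uniform law to produce the product $\prod_q(1-\lambda/\lambda^*)$, and integrating out $V_n$ via the quantile representation $\theta_n=H(x^-_{T_n},V_n)$ to produce the $Q$-weighted sum over $\theta$), merely performed in the opposite order, which is equally valid. Note that your computation yields the kernel evaluated at the pre-jump state $x^-_{T^*_{p_n}}=(\theta_{n-1},\nu_n)$ of the $n$-th jump, which is the correct argument; the subscript $p_{n-1}$ in the statement and in the paper's own proof appears to be an index typo (compare the factor $Q(\tilde x^-_{T^*_{\tilde\tau_n}},\tilde\theta_n)$ in Proposition \ref{DECOMP2}).
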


\bigskip \noindent The following proposition and its corollaries will be useful in section \ref{section_strg_estimate}. In their statements $(x_t, t \in [0,T])$ and $(\tilde x_t, t \in [0,T])$ are PDPs constructed in section \ref{construction} using the same data $( N^*_t)$, $(U_k)$, $(V_k)$ and the same initial point $x \in E$ but with different sets of characteristics.

\noindent The following results are inspired by the change of probability introduced in \cite{Xia} where the authors are interested in the application of the MLMC to jump-diffusion SDEs with state-dependent intensity. In our case, we need a change of probability which guarantees not only that the processes jump at the same times but also in the same states. 

\begin{proposition}\label{DECOMP2} Let us denote by $(\Phi,\lambda,Q)$ ( resp. $(\Phi,\tilde \lambda,\tilde Q)$) the characteristics of $(x_t)$ (resp. $(\tilde x_t)$). Let us assume that $\tilde\lambda$ and $\tilde Q$ depend only on $\theta$, that $\tilde Q$ is always positive and $0< \tilde \lambda(\theta) < \lambda^*$ for all $\theta \in \Theta$. 
For all integer $n$, let us define on the event $\{ \tilde N_t = n \},$ 
\begin{equation*}
\tilde Z_n=\frac{Q(\tilde x^-_{T^*_{\tilde \tau_n}},\tilde\theta_n)}{\tilde Q(\tilde\theta_{n-1},\tilde\theta_n)} \left( \left(1-\frac{\tilde\lambda(\tilde\theta_n)}{\lambda^*} \right)^{N_t^*-\tilde\tau_n} \right)^{-1} \prod_{q=\tilde\tau_n+1}^{N_t^*} \left( 1-\frac{\lambda(\tilde\theta_n,\Phi_{\tilde\theta_n}(T_{q}^*-T_{\tilde \tau_n}^*,\tilde\nu_n))}{\lambda^*} \right),
\end{equation*}
the product being equal to $1$ if $\tilde \tau_n=N_t^*$ and for all $1\leq \ell\leq n-1,$
\begin{align*}
\tilde Z_\ell=&\frac{Q(\tilde x^-_{T^*_{\tilde \tau_\ell}},\tilde\theta_\ell)}{\tilde Q(\tilde\theta_{\ell-1},\tilde\theta_\ell)}\, \Bigg(\frac{\tilde\lambda(\tilde\theta_\ell)}{\lambda^*}\left(1-\frac{\tilde\lambda(\tilde\theta_\ell)}{\lambda^*} \right)^{\tilde\tau_{\ell+1}-\tilde\tau_\ell-1}\Bigg)^{-1}\nonumber\\
&\frac{\lambda(\tilde\theta_\ell,\Phi_{\tilde\theta_\ell}(T_{\tilde\tau_{\ell+1}}^*-T_{\tilde\tau_\ell}^*,\tilde\nu_\ell))}{\lambda^*}\prod_{q=\tilde\tau_\ell +1}^{\tilde\tau_{\ell+1}-1} \left(1-\frac{\lambda(\tilde\theta_\ell,\Phi_{\tilde\theta_\ell}(T_{q}^*-T_{\tilde\tau_\ell}^*,\tilde\nu_\ell))}{\lambda^*} \right),\\
\tilde Z_0=&\left( \frac{\tilde \lambda(\tilde \theta_0)}{\lambda^*} \left( 1-\frac{\tilde \lambda(\tilde \theta_0)}{\lambda^*} \right)^{\tilde \tau_1 -1} \right)^{-1}
\frac{\lambda(\tilde \theta_{0},\Phi_{\tilde \theta_{0}}(T^*_{\tilde \tau_1},\tilde \nu_{0}))}{\lambda^*}  \prod_{q=1}^{\tilde \tau_1 -1} \left( 1-\frac{\lambda(\tilde \theta_{0},\Phi_{\tilde \theta_{0}}(T^*_q,\tilde \nu_{0}))}{\lambda^*} \right),\\
\tilde R_n=&\tilde Z_n\, \, \, \prod_{\ell=0}^{n-1} \, \tilde Z_\ell. 
\end{align*}

\noindent Then, for all $n \geq 0$ we have
\begin{eqnarray*}
\E[g(\tilde x_t)\, \tilde R_n\, \ind{\{\tilde N_t=n\}}]=\E[g(x_t)\, \ind{\{N_t=n\}}].
\end{eqnarray*}
\end{proposition}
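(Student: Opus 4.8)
The plan is to read $\tilde R_n$ as the Radon--Nikodym derivative between the law of the discrete skeleton of $(x_t)$ and that of $(\tilde x_t)$, evaluated along the trajectory of $(\tilde x_t)$, and to establish the identity by a direct computation: I would condition on the driving Poisson process, expand the left-hand side over the discrete skeleton using the thinning construction of Section~\ref{construction}, and let the reweighting by $\tilde R_n$ swap the $\tilde\lambda,\tilde Q$ factors for the $\lambda,Q$ factors, recognising the outcome as $\E[g(x_t)\ind{\{N_t=n\}}]$ (equivalently, the expansion underlying Proposition~\ref{marginaldistribution2}).

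First I would condition on $\{N^*_t=m\}$ together with the Poisson jump times $(T^*_1,\dots,T^*_m)$, and sum over admissible configurations $\{\tilde\tau_i=p_i,\ \tilde\theta_i=\theta_i,\ 1\le i\le n\}$ with $1\le p_1<\dots<p_n\le m$ and $\theta_0$ fixed. Because $\tilde\lambda$ and $\tilde Q$ depend only on $\theta$, the acceptance/rejection mechanism of $(\tilde x_t)$ is autonomous and the conditional probability of such a configuration factorises: each intermediate segment $\ell$ contributes a geometric factor $\frac{\tilde\lambda(\theta_\ell)}{\lambda^*}\bigl(1-\frac{\tilde\lambda(\theta_\ell)}{\lambda^*}\bigr)^{p_{\ell+1}-p_\ell-1}$ coming from the $U_k$ and a transition factor $\tilde Q(\theta_{\ell-1},\theta_\ell)$ coming from $V_\ell$; the initial segment is analogous but carries no transition factor (the state $\theta_0$ is fixed); and the final, jumpless segment contributes only the rejection factor $\bigl(1-\frac{\tilde\lambda(\theta_n)}{\lambda^*}\bigr)^{m-p_n}$. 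The three distinct shapes of $\tilde Z_0$, of the intermediate $\tilde Z_\ell$, and of the terminal $\tilde Z_n$ in the statement are dictated precisely by this boundary asymmetry.

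Next I would multiply by $\tilde R_n=\tilde Z_n\prod_{\ell=0}^{n-1}\tilde Z_\ell$ and read off the cancellation. The inverse geometric factors and the $\tilde Q$ in the denominators of the $\tilde Z_\ell$ are exactly the factors produced by the $\tilde$-construction, so they cancel, and what survives are the $\lambda$-based rejection products $\prod_q\bigl(1-\frac{\lambda(\theta_\ell,\Phi_{\theta_\ell}(\cdot))}{\lambda^*}\bigr)$, the $\lambda$-based acceptance factors, and the transition factors $Q(\tilde x^-_{T^*_{p_\ell}},\theta_\ell)$. The observation that makes everything fit together is that $(x_t)$ and $(\tilde x_t)$ share the same flow family $\Phi$: once the discrete skeleton $(p_i,\theta_i)$ is frozen, the continuous component is determined identically for both processes, so on this event $\tilde\nu_n=\nu_n$ and $g(\tilde x_t)=g(\theta_n,\Phi_{\theta_n}(t-T^*_{p_n},\nu_n))$, exactly the integrand of Proposition~\ref{marginaldistribution2}.

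Summing over the configurations and taking the expectation over $(T^*_k)$ and $m$, I would then recognise the fully expanded form of $\E[g(x_t)\ind{\{N_t=n\}}]$, with the case $n=0$ (where $\tilde R_0$ collapses to the pure rejection weight $\prod_{q=1}^{m}\bigl(1-\frac{\lambda(\tilde\theta_0,\Phi_{\tilde\theta_0}(T^*_q,\tilde\nu_0))}{\lambda^*}\bigr)$) checked separately. I expect the main obstacle to be bookkeeping rather than conceptual: one must track the telescoping of acceptance versus rejection factors across the three kinds of segments, keep the indices of the $Q/\tilde Q$ ratios aligned, and verify that the compact form of Proposition~\ref{marginaldistribution2} is reproduced after re-summation over the intermediate states $\theta_1,\dots,\theta_{n-1}$. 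A cleaner alternative would be an induction on $n$, peeling off one inter-jump interval at a time via the Markov kernel $K$ of \eqref{PDMPkernel}.
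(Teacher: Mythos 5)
Your proposal is correct and follows essentially the same route as the paper: condition on the thinning data, expand over the skeleton configurations $\{\tilde\tau_i=p_i,\ \tilde\theta_i=\theta_i\}$, let the inverse geometric and $\tilde Q$ factors in $\tilde R_n$ cancel against the autonomous law of the $\tilde{}$-skeleton so that only the $\lambda$- and $Q$-based factors survive, and identify the result with the iterated expansion of $\E[g(x_t)\ind{\{N_t=n\}}]$ from Proposition~\ref{marginaldistribution2} (the paper organises this as a backward peeling via \eqref{pourR} and a comparison with \eqref{iteration1}, which is the same computation you describe, including the key observation that the shared flow $\Phi$ makes the continuous components coincide once the skeleton is frozen).
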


\begin{cor}\label{cor2_prop2}
Under the assumptions of Proposition \ref{DECOMP2}, setting $\tilde R_t = \tilde R_{\tilde N_t}$, we have
\begin{equation*}
\E[g(\tilde x_t) \tilde R_t ]=\E[g(x_t)].
\end{equation*}
\end{cor}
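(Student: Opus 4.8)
The plan is to deduce the corollary by summing the identity of Proposition \ref{DECOMP2} over all levels $n \geq 0$. First I would unfold the definition $\tilde R_t = \tilde R_{\tilde N_t}$: since the events $\{\tilde N_t = n\}$, $n \geq 0$, form a partition of $\Omega$, one can write
\[
g(\tilde x_t)\, \tilde R_t = \sum_{n \geq 0} g(\tilde x_t)\, \tilde R_n\, \ind{\{\tilde N_t = n\}}.
\]
Taking expectations, the whole statement reduces to interchanging the expectation with this (a priori infinite) sum, after which Proposition \ref{DECOMP2} can be applied term by term.

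The only genuine point to justify is therefore the interchange of $\E$ and $\sum_n$, and I would handle it as follows. Each $\tilde R_n$ is nonnegative, being a finite product of the strictly positive factors $\tilde\lambda(\cdot)/\lambda^*$ and $\lambda(\cdot)/\lambda^*$, of the ratios $Q(\cdot)/\tilde Q(\cdot)$ with $\tilde Q$ positive by assumption, and of factors of the form $1 - \lambda/\lambda^* \in [0,1]$ coming from Assumption \ref{hyp_lambda1}. Applying Proposition \ref{DECOMP2} with $|g|$ in place of $g$ and summing over $n$ gives, by Tonelli's theorem applied to these nonnegative terms,
\[
\sum_{n \geq 0} \E[\, |g(\tilde x_t)|\, \tilde R_n\, \ind{\{\tilde N_t = n\}}\,] = \sum_{n \geq 0} \E[\, |g(x_t)|\, \ind{\{N_t = n\}}\,] = \E[\, |g(x_t)|\,],
\]
where the last equality again uses that $\{N_t = n\}$, $n \geq 0$, partitions the probability space. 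For a bounded test function $g$ this quantity is finite, which establishes absolute summability and hence legitimises the interchange of expectation and sum for $g$ itself.

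Once the interchange is granted the conclusion is immediate:
\[
\E[g(\tilde x_t)\, \tilde R_t] = \sum_{n \geq 0} \E[g(\tilde x_t)\, \tilde R_n\, \ind{\{\tilde N_t = n\}}] = \sum_{n \geq 0} \E[g(x_t)\, \ind{\{N_t = n\}}] = \E[g(x_t)],
\]
the middle equality being exactly Proposition \ref{DECOMP2} at each level $n$ and the final one again the partition property. I expect the main (indeed the only) obstacle to be this justification of the term-by-term summation; everything else is bookkeeping. In writing it carefully one should make explicit the integrability hypothesis on $g$ (boundedness, as assumed throughout for the test functions) that renders $\E[|g(x_t)|]$ finite and thereby controls the whole series.
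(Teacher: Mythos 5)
Your proposal is correct and is exactly the argument the paper intends: the corollary is obtained by summing the identity of Proposition \ref{DECOMP2} over $n\geq 0$ along the partition $\{\tilde N_t=n\}$ (the paper treats this as immediate and gives no separate proof). Your additional justification of the sum--expectation interchange via Tonelli, using the nonnegativity of $\tilde R_n$ and the boundedness of $g$, is a valid and careful supplement rather than a deviation.
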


\begin{Rem}
Proposition \ref{DECOMP2} looks like a Girsanov theorem (see \cite{palmowski2002}) however we do not use the martingale theory here.
\end{Rem}

\begin{Rem}\label{generalisation_change_characteristics}
We have chosen to state Proposition \ref{DECOMP2} with a PDP $(\tilde x_t)$ whose intensity and transition measure only depend on $\theta$ for readability purposes. Actually the arguments of the proof are valid for non homogeneous intensity and transition measure of the form $\tilde \lambda (x,t)$ and $\tilde Q((x,t),dy)$ for $x=(\theta,\nu) \in E$. A possible choice of such characteristics is $\tilde \lambda (x,t) = \lambda(\theta, \tilde \Phi_\theta(t,\nu))$ and  $\tilde Q((x,t),dy) = Q( (\theta,\tilde \Phi_\theta(t,\nu)), dy )$ for $\tilde \Phi$ a given function. This remark will be implemented in section \ref{section_numerical_results}. 
\end{Rem}

\begin{cor}\label{cor1_propr2} Let $(\Phi,\lambda,Q)$ (resp. $(\tilde \Phi, \lambda, Q)$) be the set of characteristics of $(x_t)$ (resp. $(\tilde x_t))$. We assume that $Q$ is always positive and that $0<  \lambda(x) < \lambda^*$ for all $x \in E$.
Let $(\mu_n)$ be the sequence defined by $\mu_0 = \nu$ and $\mu_n = \tilde \Phi_{\theta_{n-1}}(T_n-T_{n-1},\mu_{n-1})$ for $n \geq 1$.  For all integer $n$, let us define on the event $\{ N_t = n \}$, 
\begin{align*}
\tilde Z_n
&=
\frac{Q \big( (\theta_{n-1},\mu_n),\theta_n \big)}{Q \big( (\theta_{n-1},\nu_n),\theta_n \big)}
\left( \prod_{q=\tau_n+1}^{N_t^*} 1-\frac{\lambda \big( \theta_n, \Phi_{\theta_n}(T^*_q - T^*_{\tau_n},\nu_n) \big)}{\lambda^*} \right)^{-1} 
\\&
\prod_{q=\tau_n+1}^{N_t^*} \left(1-\frac{\lambda \big( \theta_n, \tilde \Phi_{\theta_n}(T^*_q - T^*_{\tau_n},\mu_n) \big)}{\lambda^*} \right),
\end{align*}
the products being equal to $1$ if $ \tau_n=N_t^*$ and for all $1\leq \ell\leq n-1,$
\begin{align*}
\tilde Z_\ell=&
\frac{Q \big( (\theta_{\ell-1},\mu_\ell),\theta_\ell \big)}{ Q \big( (\theta_{\ell-1},\nu_\ell),\theta_\ell \big)}\, 
\Bigg( \frac{\lambda \big( \theta_\ell, \Phi_{\theta_\ell}(T^*_{\tau_{\ell+1}} - T^*_{\tau_\ell},\nu_\ell) \big)}{\lambda^*}\prod_{q=\tau_\ell +1}^{\tau_{\ell+1}-1} \left( 1-\frac{\lambda \big( \theta_\ell, \Phi_{\theta_\ell}(T^*_{q} - T^*_{\tau_\ell},\nu_\ell) \big)}{\lambda^*} \right) \Bigg)^{-1} \\
&\frac{\lambda \big( \theta_\ell, \tilde \Phi_{\theta_\ell}(T^*_{\tau_{\ell+1}} - T^*_{\tau_\ell},\mu_\ell) \big)}{\lambda^*}\prod_{q=\tau_\ell +1}^{\tau_{\ell+1}-1} \left( 1-\frac{\lambda \big( \theta_\ell, \tilde \Phi_{\theta_\ell}(T^*_{q} - T^*_{\tau_\ell},\mu_\ell) \big)}{\lambda^*} \right),
\nonumber\\
\tilde Z_0=& 
\Bigg(\frac{\lambda \big( \theta_0, \Phi_{\theta_0}(T^*_{\tau_{1}},\nu_0) \big)}{\lambda^*} \prod_{q=1}^{\tau_{1}-1} \left( 1-\frac{\lambda \big( \theta_0, \Phi_{\theta_0}(T^*_{q},\nu_0) \big)}{\lambda^*} \right) \Bigg)^{-1} \nonumber\\
& \frac{\lambda \big( \theta_0, \tilde \Phi_{\theta_0}(T^*_{\tau_{1}},\mu_0) \big)}{\lambda^*} \prod_{q=1}^{\tau_{1}-1} \left( 1-\frac{\lambda \big( \theta_0, \tilde \Phi_{\theta_0}(T^*_{q},\mu_0) \big)}{\lambda^*} \right),
\nonumber\\
\tilde R_n=&\tilde Z_n\, \, \, \prod_{\ell=0}^{n-1} \, \tilde Z_\ell. \nonumber
\end{align*}

\noindent Then, for all $n \geq 0$ we have
\begin{eqnarray*}
\E[g\left(\theta_n, \tilde \Phi_{\theta_n}(t-T_n,\mu_n) \right)\, \tilde R_n\, \ind{\{ N_t=n\}}]=\E[g(\tilde x_t)\, \ind{\{\tilde N_t=n\}}].
\end{eqnarray*}
\end{cor}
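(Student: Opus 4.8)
The plan is to mirror the proof of Proposition \ref{DECOMP2} and to realise the program announced in Remark \ref{generalisation_change_characteristics}, but with the roles of the flow and of the pair (intensity, transition) interchanged: here $\lambda$ and $Q$ are common to $(x_t)$ and $(\tilde x_t)$, only the flow changes ($\Phi$ versus $\tilde\Phi$), and the weight $\tilde R_n$ must transport expectations computed along the jump structure of $(x_t)$ onto expectations of $(\tilde x_t)$. The crucial fact I would exploit throughout is that both processes are built from the \emph{same} data $(N^*_t),(U_k),(V_k)$ and the same initial point. First I would fix $n\geq 0$, condition on $\{N^*_t=m\}$ for $m\geq n$ together with the Poisson jump times $(T^*_k)_{k\leq m}$, so that the only remaining randomness is carried by the independent sequences $(U_k)$ and $(V_k)$.

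Next I would expand the left-hand side along the thinning construction of $(x_t)$, exactly in the spirit of Proposition \ref{marginaldistribution2} but with the more complicated integrand at hand. I decompose $\ind{N_t=n}$ over the acceptance pattern $1\leq p_1<\dots<p_n\leq m$ (the event $\{\tau_i=p_i,\,1\leq i\leq n\}$) and over the discrete trajectory $\theta_1,\dots,\theta_n\in\Theta$, with $\theta_0$ fixed. Integrating out the $(U_k)$ produces, on each inter-jump block, an acceptance factor $\lambda(\theta_\ell,\Phi_{\theta_\ell}(\cdot,\nu_\ell))/\lambda^*$ and rejection factors $1-\lambda(\theta_\ell,\Phi_{\theta_\ell}(\cdot,\nu_\ell))/\lambda^*$, while integrating out the $(V_k)$ produces the transition weights $Q((\theta_{\ell-1},\nu_\ell),\theta_\ell)$; here $\nu_\ell=\Phi_{\theta_{\ell-1}}(T_\ell-T_{\ell-1},\nu_{\ell-1})$ is the $\Phi$-flow along $x$'s jump times, whereas $\mu_\ell=\tilde\Phi_{\theta_{\ell-1}}(T_\ell-T_{\ell-1},\mu_{\ell-1})$ is the $\tilde\Phi$-flow along the same times.

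The heart of the argument is the algebraic cancellation built into $\tilde R_n=\tilde Z_n\prod_{\ell=0}^{n-1}\tilde Z_\ell$. Block by block, $\tilde Z_0$ handles the initial interval $[0,T_1)$ (rejections up to $\tau_1-1$ and the acceptance at $\tau_1$, no $Q$-ratio since $\theta_0$ is fixed), each $\tilde Z_\ell$ for $1\leq\ell\leq n-1$ handles the transition to $\theta_\ell$ at $T_\ell$ together with the block $[T_\ell,T_{\ell+1})$, and $\tilde Z_n$ handles the transition to $\theta_n$ and the terminal block $[T_n,t]$ (only rejections $q=\tau_n+1,\dots,m$). In every factor the denominator cancels exactly the $(\Phi,\nu_\ell)$ acceptance/rejection weights and the $Q((\theta_{\ell-1},\nu_\ell),\theta_\ell)$ transition weight, while the numerator substitutes the corresponding $(\tilde\Phi,\mu_\ell)$ quantities and $Q((\theta_{\ell-1},\mu_\ell),\theta_\ell)$. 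Since the integrand $g(\theta_n,\tilde\Phi_{\theta_n}(t-T_n,\mu_n))$ already carries the $\tilde\Phi,\mu$ continuous component, after multiplication by $\tilde R_n$ every occurrence of $(\Phi,\nu_\ell)$ in the summand is replaced by $(\tilde\Phi,\mu_\ell)$. This is precisely where the standing assumptions $Q>0$ and $0<\lambda<\lambda^*$ enter: all cancelled denominators are strictly positive, so $\tilde R_n$ is well-defined and finite almost surely.

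Finally I would recognise the reweighted left-hand side as the thinning expansion of the right-hand side. Applying the same construction to $(\tilde x_t)$ yields $\E[g(\tilde x_t)\ind{\tilde N_t=n}]$ as a sum over the same patterns $p_1<\dots<p_n$ and discrete states, with continuous component $\tilde\nu_\ell=\tilde\Phi_{\tilde\theta_{\ell-1}}(\tilde T_\ell-\tilde T_{\ell-1},\tilde\nu_{\ell-1})$. On $\{\tau_i=p_i\}$ one has $T_\ell=T^*_{p_\ell}=\tilde T_\ell$, so once the summation indices and the discrete states are identified, the recursion defining $\mu_\ell$ coincides term for term with that defining $\tilde\nu_\ell$ (both starting from $\nu$); the two expansions therefore agree summand by summand, and summing over $m\geq n$ gives the claim. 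I expect the main obstacle to be purely organisational: keeping the block indexing consistent (which $\tilde Z$ accounts for which jump and which inter-jump interval), correctly treating the empty-product edge cases ($\tau_n=N^*_t$, the initial and terminal blocks, and $n=0$), and checking that the conditional integration over $(U_k)$ and $(V_k)$ genuinely factorises block by block as claimed.
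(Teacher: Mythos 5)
Your proposal is correct and follows essentially the same route as the paper, which treats this corollary as "proved analogously" to Proposition \ref{DECOMP2}: decompose over the acceptance pattern $\{\tau_i=p_i\}$ and the discrete states, condition block by block to integrate out the $(U_k)$ and $(V_k)$, let the denominators of the $\tilde Z_\ell$ cancel the $(\Phi,\nu_\ell)$ acceptance/rejection and $Q((\theta_{\ell-1},\nu_\ell),\cdot)$ weights while the numerators substitute the $(\tilde\Phi,\mu_\ell)$ versions, and identify the resulting sum term by term with the thinning expansion of $\E[g(\tilde x_t)\ind{\{\tilde N_t=n\}}]$ via $\mu_\ell=\tilde\nu_\ell$ on matched patterns. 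The only difference is presentational: the paper conditions iteratively (first $V_n$, then the block of $U_q$'s, and so on, as in \eqref{pourR} and \eqref{iteration1}), whereas you condition on the Poisson data once and factorise, which amounts to the same computation.
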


\begin{proof}[Proof of Proposition \ref{marginaldistribution2}] It holds that $\{N_t=n, \tau_i=p_i,\, 1\leq i\leq n\}\subset \{N_t^*\geq p_n\}$. Then
$$
\E[g(x_t)\ind{\{N_t=n\}}]=\sum_{1\leq p_1<p_2<...<p_{n}\leq m}\, \, \E[g(x_t)\ind{\{N_t=n, \tau_i=p_i,\, 1\leq i\leq n, N_t^*=m\}}].
$$
\noindent The set $\{N_t=n, \tau_i=p_i,\, 1\leq i\leq n, N_t^*=m\}$ is equivalent to the following

\noindent - $N_t^*=m$,

\noindent - among the times $T^*_\ell, 1\leq \ell\leq m$ exactly $n$ are accepted by the thinning method they are the $T^*_{p_i}, 1\leq i\leq n$, all the others are rejected. 

\noindent We proceed by induction starting from the fact that all the $T^*_q, \, p_n+1\leq q\leq m$ are rejected which corresponds to the event 
\begin{equation*}
\forall\quad  \, p_n+1\leq q\leq m,\quad \quad U_q> \frac{\lambda(\theta_n,\Phi_{\theta_n}(T_q^*-T_{p_n}^*,\nu_n))}{\lambda^*}.
\end{equation*}
The random variable  $\ind{\{\tau_i=p_i,\,  1\leq i\leq n\}}$ depends on $(\theta_\ell, \nu_\ell, 1\leq \ell \leq n-1, T^*_i, 1\leq i\leq p_n, U_j, 1\leq j\leq p_n )$ where by construction $\nu_\ell=\phi_{\theta_{\ell-1}}(T^*_{p_\ell}-T^*_{p_{\ell-1}},\nu_{\ell-1})$, $\theta_\ell=H((\theta_{\ell-1},\nu_\ell),V_\ell)$ which implies that $(\theta_\ell, \nu_\ell, 1\leq \ell \leq n-1)$ depend on $(T^*_i, 1\leq i\leq p_{n-1}, U_j, 1\leq j\leq p_{n-1}, V_k, 1\leq k\leq n-1)$. Thus $V_n$ is independent of all the other random variables of thinning that are present in $g(x_t)\ind{\{N_t=n, \tau_i=p_i,\, 1\leq i\leq n, \, N_t^*=m\}}$. The conditional expectation of $g(x_t)\ind{\{N_t=n, \tau_i=p_i,\, 1\leq i\leq n, N_t^*=m\}}$ w.r.t. the vector $(T^*_i, 1\leq i\leq m+1, U_j, 1\leq j\leq m, V_k, 1\leq k\leq n-1)$ is therefore an expectation indexed by this vector as parameters. Since the law of $H(x,V_n)$ is $Q(x,\cdot)$ for all $x \in E$ we obtain for $p_1<p_2<...<p_n\leq m$,
\begin{align}
&\E[g(x_t)\ind{\{N_t=n, \tau_i=p_i,\, 1\leq i\leq n, \, N_t^*=m\}}]\nonumber\\
&= \E[\sum_{\theta\in \Theta} Q(x^-_{T^*_{p_{n-1}}},\theta)\, g( \theta, \Phi_{\theta}(t-T^*_{p_n},\nu_n))\nonumber\\
&F(\theta,U_j,1\leq j\leq m,\, T^*_\ell,1\leq \ell\leq m+1, V_k, 1\leq k\leq n-1)],\label{lastgen 2}
\end{align}

\noindent with 
\begin{eqnarray*}
&&F(\theta,U_j,1\leq j\leq m,\, T^*_\ell,1\leq \ell\leq m+1, V_k, 1\leq k\leq n-1)\nonumber\\
&=&\ind{\{N_t^*=m, \tau_i=p_i,\,  1\leq i\leq n\}}
\prod_{q=p_n+1}^{m}\, \ind{U_{q}> \frac{\lambda(\theta,\Phi_{\theta}(T_{q}^*-T_{p_n}^*,\nu_n))}{\lambda^*}}.
\end{eqnarray*}

\noindent In (\ref{lastgen 2}) the random variables $(U_{q}, \, p_n+1\leq q\leq m)$ are independent of the vector $(T^*_i, 1\leq i\leq m+1, U_j, 1\leq j\leq p_n, V_k, 1\leq k\leq n-1)$. Conditioning by this vector we obtain
\begin{eqnarray}
&&\E[g(x_t)\ind{\{N_t=n, \tau_i=p_i,\, 1\leq i\leq n, \, N_t^*=m\}}] \nonumber\\
&=&\sum_{\theta\in \Theta}\, \E[Q(x^-_{T^*_{p_{n-1}}},\theta)\, g( \theta, \Phi_{\theta}(t-T^*_{p_n},\nu_n))\ind{\{N_t^*=m, \tau_i=p_i,\,  1\leq i\leq n\}}\nonumber\\
&&\prod_{q=p_n+1}^{m}(1-\frac{\lambda(\theta,\Phi_{\theta}(T_{q}^*-T_{p_n}^*,\nu_n))}{\lambda^*})].\nonumber
\end{eqnarray}

\noindent We can iterate on the latter form by first conditioning $V_{n-1}$ by all the other r.v. and then conditioning $(U_{q}, \, p_{n-1}+1\leq q \leq p_n)$ by all the remaining ones and so on. However the terms that appear do not have the same structure since the $U_{q}$ correspond to a rejection for $p_{n-1}+1 \leq q \leq p_n-1$ whereas $U_{p_{n}}$ corresponds to an acceptation. So that the next step yields  
\begin{eqnarray}
&&\E[g(x_t)\ind{\{N_t=n, \tau_i=p_i,\, 1\leq i\leq n, \, N_t^*=m\}}] \nonumber\\
&=&\sum_{\alpha\in \Theta}\sum_{\theta\in \Theta}\, \E[Q(x^-_{T^*_{p_{n-2}}},\alpha)Q\left( (\alpha,\nu_n),\theta \right)\, g( \theta, \Phi_{\theta}(t-T^*_{p_n},\nu_n))\ind{\{N_t^*=m, \tau_i=p_i,\,  1\leq i\leq n-1\}}\nonumber\\
&&\frac{\lambda(\alpha,\Phi_{\alpha}(T_{p_n}^*-T_{p_{n-1}}^*,\nu_{n-1}))}{\lambda^*}\prod_{q=p_{n-1}+1}^{p_n-1}(1-\frac{\lambda(\alpha,\Phi_{\alpha}(T_{q}^*-T_{p_{n-1}}^*,\nu_{n-1}))}{\lambda^*})\nonumber\\
&&\prod_{q=p_n+1}^{m}(1-\frac{\lambda(\theta,\Phi_{\theta}(T_{q}^*-T_{p_n}^*,\nu_n))}{\lambda^*})],\label{iteration1}
\end{eqnarray}

\noindent where we write $\nu_n$ for simplicity keeping in mind that  $\nu_n=\Phi_{\theta_{n-1}}(T^*_{p_n}-T^*_{p_{n-1}},\nu_{n-1})=\Phi_{\theta_{n-1}}(T^*_{p_n}-T^*_{p_{n-1}},\Phi_{\theta_{n-2}}(T^*_{p_{n-1}}-T^*_{p_{n-2}},\nu_{n-2}))=\Phi_{\alpha}(T^*_{p_n}-T^*_{p_{n-1}},\Phi_{\theta_{n-2}}(T^*_{p_{n-1}}-T^*_{p_{n-2}},\nu_{n-2}))$.

\noindent Moreover the previous arguments apply to $\E(g(x_t) f(\theta_i,\nu_i, 1\leq i\leq n-1, \theta_n,\nu_n, T^*_k, 1\leq k\leq m)\,  \ind{\{N_t=n, \tau_i=p_i,\, 1\leq i\leq n, \, N_t^*=m\}})$ and provide
\begin{eqnarray}
&&\E[g(x_t) f(\theta_i,\nu_i, 1\leq i\leq n-1, \theta_n,\nu_n, T^*_k, 1\leq k\leq m)\,  \ind{\{N_t=n, \tau_i=p_i,\, 1\leq i\leq n, \, N_t^*=m\}}]\nonumber\\
&&=\sum_{\theta\in \Theta}  \E[Q(x^-_{T^*_{p_{n-1}}},\theta) g( \theta, \Phi_{\theta}(t-T^*_{p_n},\nu_n))f(\theta_i,\nu_i, 1\leq i\leq n-1, \theta, \nu_n, T^*_k, 1\leq k\leq m)\nonumber\\
&& \ind{\{N_t^*=m, \tau_i=p_i,\, 1\leq i\leq n\}} \, \prod_{q=p_n+1}^{m}(1-\frac{\lambda(\theta,\Phi_{\theta}(T_{q}^*-T_{p_n}^*,\nu_n))}{\lambda^*})]\label{pourR}.
\end{eqnarray}
\end{proof}

\noindent We prove below Proposition \ref{DECOMP2}. The other statements can be proved analogously.

\begin{proof}[Proof of Proposition \ref{DECOMP2}] By assumption the (jump) characteristics $(\tilde \lambda, \tilde Q)$ of $(\tilde x_t)$ 
depend only on $\theta$. Let $p_1<p_2<...<p_n\leq m$. Applying the same arguments as in (\ref{pourR}) to $(\tilde x_t)$ and using the definitions of $\tilde Z_\ell, \, 0\leq \ell\leq n$ and $\tilde R_n$ we obtain, 
\begin{eqnarray*}
&&\E[g(\tilde x_t)\, \tilde R_n\, \ind{\{\tilde N_t=n, \tilde\tau_i=p_i, 1\leq i\leq n, N_t^*=m\}}]\\
&&=\sum_{\theta\in \Theta}  \E[\tilde Q(\tilde\theta_{n-1},\theta)\, g( \theta, \Phi_{\theta}(t-T^*_{p_n},{\tilde\nu}_n))\, \tilde Z_n\, \prod_{\ell=0}^{n-1} \, \tilde Z_\ell\,
 \ind{\{N_t^*=m, {\tilde\tau}_i=p_i,\, 1\leq i\leq n\}}] \, \, (1-\frac{\tilde\lambda(\theta)}{\lambda^*})^{m-p_n}\\
&&=\sum_{\theta\in \Theta}\, \E[\tilde Q(\tilde\theta_{n-1},\theta)\, g( \theta, \Phi_{\theta}(t-T^*_{p_n},\tilde\nu_n)) \, \prod_{\ell=0}^{n-1} \, \tilde Z_\ell\,\,  \ind{\{N_t^*=m, \tilde\tau_i=p_i,\,  1\leq i\leq n\}}\, (1-\frac{\tilde\lambda(\theta)}{\lambda^*})^{m-p_n}\\
&&\Big((1-\frac{\tilde\lambda(\theta)}{\lambda^*})^{m-p_n}\Big)^{-1}\frac{Q(\tilde x^-_{ T^*_{p_{n-1}}},\theta)}{\tilde Q(\tilde\theta_{n-1},\theta)}\prod_{q=p_n+1}^{m}(1-\frac{\lambda(\theta,\Phi_{\theta}(T_{q}^*-T_{p_n}^*,\tilde\nu_n))}{\lambda^*})]\\
&&=\sum_{\theta\in \Theta}\, \E[Q(\tilde x^-_{T^*_{p_{n-1}}},\theta)\, g( \theta, \Phi_{\theta}(t-T^*_{p_n},\tilde\nu_n)) \,\tilde Z_{n-1}\,  \prod_{\ell=0}^{n-2} \, \tilde Z_\ell\,\,  \ind{\{N_t^*=m, \tilde\tau_i=p_i,\,  1\leq i\leq n\}}\\
&&\, \prod_{q=p_n+1}^{m}(1-\frac{\lambda(\theta,\Phi_{\theta}(T_{q}^*-T_{p_n}^*,\tilde\nu_n))}{\lambda^*})].
\end{eqnarray*}

\noindent We iterate the previous argument based on the use of (\ref{pourR}) and we use the definition of $\tilde Z_{n-1}$ to obtain
\color{black}
\begin{eqnarray*}
&&\E[g(\tilde x_t) \tilde R_n \ind{\{\tilde N_t=n, \tilde\tau_i=p_i,\, 1\leq i\leq n, \, N_t^*=m\}}] \nonumber\\
&=&\sum_{\alpha\in \Theta}\sum_{\theta\in \Theta}\, \E[Q(\tilde x^-_{T^*_{p_{n-2}}},\alpha) Q( (\alpha,\tilde \nu_n),\theta)\, g( \theta, \Phi_{\theta}(t-T^*_{p_n},\tilde\nu_n)) \nonumber\\
&&\prod_{\ell=0}^{n-2} \, \tilde Z_\ell\, \,  \ind{\{N_t^*=m, \tilde\tau_i=p_i,\,  1\leq i\leq n-1\}}
\prod_{q=p_n+1}^{m}(1-\frac{\lambda(\theta,\Phi_{\theta}(T_{q}^*-T_{p_n}^*,\tilde\nu_n))}{\lambda^*})\\
&&\frac{\lambda(\alpha,\Phi_{\alpha}(T_{p_n}^*-T_{p_{n-1}}^*,\tilde\nu_{n-1}))}{\lambda^*}\prod_{q=p_{n-1}+1}^{p_{n}-1}(1-\frac{\lambda(\alpha,\Phi_{\alpha}(T_{q}^*-T_{p_{n-1}}^*,\tilde\nu_{n-1}))}{\lambda^*})],
\end{eqnarray*}
where for short  $\tilde\nu_{n}=\phi_{\alpha}(T^*_{p_n}-T^*_{p_{n-1}},\tilde\nu_{n-1})$ and $\tilde\nu_{n-1}=\phi_{\tilde\theta_{n-2}}(T^*_{p_{n-1}}-T^*_{p_{n-2}},\tilde\nu_{n-2})$. Comparing the latter expression to  \eqref{iteration1} and using an induction we conclude that 
\begin{equation*}
\E[g(\tilde x_t) \tilde R_n \ind{\{\tilde N_t=n, \tilde\tau_i=p_i,\, 1\leq i\leq n, \, N_t^*=m\}}]=\E[g(x_t)\,\ind{\{N_t=n, \tau_i=p_i, 1\leq i\leq n, N_t^*=m\}}].
\end{equation*}
It remains to sum up on $p_i, 1\leq i\leq n$ and $m$.
\end{proof}

\section{Strong error estimates}\label{section_strg_estimate}

In this section we are interested in strong error estimates. Below, we state the main assumptions and theorems of this section, the proofs are given in sections \ref{strong_error_x}, \ref{strong_error_tilde_x} respectively.

\begin{hyp}\label{hyp_lambda_lip}
For all $\theta \in \Theta$ and for all $A \in \cali B(\Theta)$, the functions $\nu \mapsto \lambda(\theta,\nu)$ and $\nu \mapsto Q( (\theta,\nu),A)$ are Lipschitz with constants $L_\lambda>0$, $L_Q>0$ respectively independent of $\theta$.
\end{hyp}

\begin{theorem}\label{thm_strong_cv}
Let $\Phi_\theta$ and $\overline \Phi_\theta$ satisfying \eqref{error_flow} and let $(x_t, t \in [0,T])$ and $(\overline x_t, t \in [0,T])$ be the corresponding PDPs constructed in section \ref{construction} with $x_0 = \overline x_0 = x$ for some $x \in E$. Assume that $\Theta$ is finite and that $\lambda$ and $Q$ satisfy Assumption \ref{hyp_lambda_lip}. Then, for all bounded functions $F: E \rightarrow \bb R$ such that for all $\theta \in \Theta$ the function $\nu \mapsto F(\theta,\nu)$ is $L_F$-Lipschitz where $L_F$ is positive and independent of $\theta$, there exists constants $V_1 > 0$ and $V_2 > 0$ independent of the time step $h$ such that  
\begin{equation*}
\E \left[ |F(\overline x_T) - F(x_T)|^2 \right] \leq V_1 h + V_2 h^2.
\end{equation*}
\end{theorem}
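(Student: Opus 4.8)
The plan is to couple the two processes through their common thinning data and to split the expectation according to whether $(x_t)$ and $(\overline x_t)$ share ``the same history'' on $[0,T]$. Since both are built from the same Poisson process $(N^*_t)$ and the same sequences $(U_k)$, $(V_k)$, the only sources of discrepancy are the acceptance/rejection tests, which compare $U_k\lambda^*$ to $\lambda$ evaluated along $\Phi$ versus $\overline\Phi$, and the post-jump discrete states, obtained by feeding the same $V_k$ into $H$ at two nearby continuous positions. I would therefore introduce the coincidence event $A$ on which, at every proposed time $T^*_k\le T$, the two processes make the same accept/reject decision and, whenever they accept, jump to the same discrete state. On $A$ one has $\tau_n=\overline\tau_n$, $T_n=\overline T_n$ and $\theta_n=\overline\theta_n$ for all $n$ with $T_n\le T$, so the two trajectories differ only through their continuous components. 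I would then write
\[
\E[|F(\overline x_T)-F(x_T)|^2]=\E[|F(\overline x_T)-F(x_T)|^2\ind A]+\E[|F(\overline x_T)-F(x_T)|^2\ind{A^c}],
\]
expecting the first term to produce the $V_2h^2$ contribution and the second the $V_1h$ contribution.

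On $A$ the discrete components coincide, and the continuous components are generated by iterating $\Phi$ and $\overline\Phi$ along the common grid $(T_n)$ with common labels $(\theta_n)$, so Lemma \ref{strong_error_lemme2} applies verbatim and bounds the continuous discrepancy at each jump by $e^{C_1T}N_T C_2h$; applying \eqref{error_flow} once more on the last inter-jump interval controls the continuous gap at time $T$ by $C N_T h$. Since $F(\theta,\cdot)$ is $L_F$-Lipschitz, $|F(\overline x_T)-F(x_T)|\ind A\le L_F C N_T h$, whence $\E[|F(\overline x_T)-F(x_T)|^2\ind A]\le L_F^2C^2h^2\,\E[N_T^2]$. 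Because $N_T\le N^*_T$ and $N^*_T$ is Poisson$(\lambda^*T)$ under Assumption \ref{hyp_lambda1}, $\E[N_T^2]<\infty$ and this yields the $V_2h^2$ term.

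On $A^c$ the boundedness of $F$ gives $|F(\overline x_T)-F(x_T)|^2\ind{A^c}\le 4\|F\|_\infty^2\ind{A^c}$, so it remains to show $\bb P(A^c)=O(h)$. I would decompose $A^c$ over the index $k$ of the first proposed time $T^*_k\le T$ at which a discrepancy occurs. Conditionally on no earlier discrepancy, the two processes are in the same discrete state $\theta$ at nearby continuous positions $\nu,\overline\nu$, and since at most $k$ jumps have been produced, Lemma \ref{strong_error_lemme2} together with \eqref{error_flow} gives $|\nu-\overline\nu|\le C k h$. A discrepancy at step $k$ is of two types. As $U_k$ is uniform and independent of the past, the conditional probability of an accept/reject mismatch equals $|\lambda(\theta,\nu)-\lambda(\theta,\overline\nu)|/\lambda^*\le(L_\lambda/\lambda^*)|\nu-\overline\nu|$ by Assumption \ref{hyp_lambda_lip}. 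Conditionally on both accepting, the probability that $H((\theta,\nu),V_k)\ne H((\theta,\overline\nu),V_k)$ is at most $\sum_i|a_i(\theta,\nu)-a_i(\theta,\overline\nu)|$, which by finiteness of $\Theta$ and the Lipschitz property of $\nu\mapsto Q((\theta,\nu),A)$ is bounded by $C_\Theta L_Q|\nu-\overline\nu|$. Both contributions are $O(kh)$, so the conditional probability of a first discrepancy at step $k$ is at most $Ckh$. Summing over the relevant indices $k\le N^*_T$,
\[
\bb P(A^c)\le Ch\,\E\Big[\sum_{k=1}^{N^*_T}k\Big]=Ch\,\E\Big[\frac12 N^*_T(N^*_T+1)\Big],
\]
which is $O(h)$ since $N^*_T$ has a finite second moment. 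This produces the $V_1h$ term and completes the estimate.

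The main obstacle is the bound $\bb P(A^c)=O(h)$. One must set up the first-discrepancy decomposition carefully, conditioning on the history generated up to $T^*_k$ so that $U_k$ and $V_k$ remain uniform and independent, and then verify that the conditional discrepancy probability is genuinely linear in $h$ for \emph{both} the intensity test and the discrete transition. This is precisely where the finiteness of $\Theta$ and the Lipschitz assumptions on $\lambda$ and $Q$ are used, the former to keep the sum $\sum_i|a_i(\theta,\nu)-a_i(\theta,\overline\nu)|$ finite. The remaining work — propagating the continuous error via Lemma \ref{strong_error_lemme2} and controlling the number of jumps by the second moment of the dominating Poisson process — is routine.
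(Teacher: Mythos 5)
Your proposal is correct and follows essentially the same route as the paper: the coincidence event $A$ is the paper's event $\{\min(T_{\overline \tau^\dagger},\overline T_{\overline \tau^\dagger})>T\}$, the $h^2$ term on $A$ is obtained exactly as in the paper via the Lipschitz property of $F$ and Lemma \ref{strong_error_lemme2}, and your first-discrepancy decomposition of $A^c$ into an accept/reject mismatch (controlled by the Lipschitz continuity of $\lambda$) and a post-jump-state mismatch (controlled by Lemma \ref{strong_error_lemme1} and the Lipschitz continuity of $Q$) is precisely the paper's splitting into the terms $\overline I_k$ and $\overline J_k$. The only cosmetic difference is that you index the first discrepancy by the proposal time $T^*_k$ rather than by the accepted jump number, which changes nothing since both counts are dominated by $N^*_T$, whose second moment is finite.
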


\begin{Rem}
When the numerical scheme $\overline \Phi_\theta$ is of order $p \geq 1$, which means $\sup_{t \in [0,T]} |\Phi_\theta(t,\nu_1) - \overline \Phi_\theta(t,\nu_2)| \leq e^{C_1 T}|\nu_1-\nu_2| + C_2 h^p $ we have $\E \left[ |F(\overline x_T) - F(x_T)|^2 \right] \leq V_1 h^p + V_2 h^{2p}$.
\end{Rem}

\begin{hyp}\label{hyp_tilde}
There exist positive constants $\rho$, $\tilde \lambda_{\min}$, $\tilde \lambda_{\max}$ such that for all $(i,j) \in \Theta^2$, $\rho \leq \tilde Q(i,j)$ and $\tilde \lambda_{\min} \leq \tilde\lambda(i) \leq \tilde \lambda_{\max}<\lambda^*$.
\end{hyp}

\begin{theorem}\label{thm_strong_cv_tilde}
Let $\Phi_\theta$ and $\overline \Phi_\theta$ satisfying \eqref{error_flow} and let $(\tilde x_t, t \in [0,T])$ and $(\underline{ \tilde x}_t, t \in [0,T])$ be the corresponding PDPs constructed in section \ref{construction} with $\underline{\tilde x}_0 =\tilde x_0 =  x$ for some $x \in E$. 
Let $(\tilde R_t, t \in [0,T])$ and $(\underline{ \tilde R}_t, t \in [0,T])$ be defined as in Corollary \ref{cor2_prop2}.  
Under assumptions \ref{hyp_lambda_lip} and \ref{hyp_tilde} and for all bounded functions $F: E \rightarrow \bb R$ such that for all $\theta \in \Theta$ the function $\nu \mapsto F(\theta,\nu)$ is $L_F$-Lipschitz ($L_F > 0$), there exists a positive constant $\tilde V_1$ independent of the time step $h$ such that
\begin{equation*}
\bb E \left[ |F(\underline{\tilde x}_T) \underline{\tilde R}_T - F(\tilde x_T) \tilde R_T|^2 \right]
\leq 
\tilde V_1 h^2,
\end{equation*}
 where $\tilde R_T$ has been defined in Corollary \ref{cor2_prop2}.
\end{theorem}

\noindent We now introduce the random variable $\overline \tau^\dagger$ which will play an important role in the strong error estimate of Theorem \ref{thm_strong_cv} as well as in the identification of the coefficient $c_1$ in the weak error expansion in section \ref{section_weak_expansion} (see the proof of Theorem \ref{weak_error_thm} in section \ref{errfaible}).
\begin{defi}\label{tau*}
Let us define $\overline \tau^\dagger := \inf\left\{ k > 0 : (\tau_k,\theta_k) \neq (\overline \tau_k,\overline \theta_k) \right\}$.
\end{defi}

\noindent The random variable $\overline \tau^\dagger$  enables us to partition the trajectories of the couple $(x_t,\overline x_t)$ in a sense that we precise now. Consider the event 
\begin{equation}\label{min_tau_dag}
\{ \min( T_{\overline \tau^\dagger}, \overline T_{\overline \tau^\dagger}) > T \} = \left\{ N_T = \overline N_T, (T_1,\theta_1) = (\overline T_1,\overline \theta_1),\ldots, (T_{N_T},\theta_{N_T}) = (\overline T_{\overline N_T},\overline \theta_{\overline N_T}) \right\},
\end{equation}

\noindent where $(T_n)$ and $(\overline T_n)$ denote the sequences of jump times of $(x_t)$ and $(\overline x_t)$. On this event $\{ \min( T_{\overline \tau^\dagger}, \overline T_{\overline \tau^\dagger}) > T\}$ the trajectories of the discrete time processes $(T_n,\theta_n)$ and $(\overline T_n, \overline \theta_n)$ are equal for all $n$ such that $T_n \in [0,T]$ (or equivalently $\overline T_n \in [0,T]$). Moreover the complement i.e $\{ \min( T_{\overline \tau^\dagger}, \overline T_{\overline \tau^\dagger}) \leq T \}$ contains the trajectories for which $(T_n,\theta_n)$ and $(\overline T_n,\overline \theta_n)$ differ on $[0,T]$ (there exits $n \leq N_T \smax \overline N_T$ such that $T_n \neq \overline T_n$ or $\theta_n \neq \overline \theta_n$).

\subsection{Preliminary lemmas}

In this section we start with two lemmas which will be useful to prove Theorems \ref{strong_error_x} and \ref{strong_error_tilde_x}.

\begin{lemme}\label{strong_error_lemme1}
Let $ K $ be a finite set. We denote by $|K|$ the cardinal of $K$ and for $i=1,\ldots,|K|$ we denote by $k_i$ its elements. Let $( p_i, 1 \leq i \leq |K| )$ and $(\overline p_i, 1 \leq i \leq |K|)$ be two probabilities on $ K $. Let $a_j := \sum_{i=1}^{j} p_i $ and $ \overline a_j := \sum_{i=1}^{j} \overline p_i$ for all $j \in \{1,\ldots,|K|\}$. 
By convention, we set $a_0 = \overline a_0 := 0$. 
Let $ X $ and $ \overline X $ be two $K$-valued random variables defined by 
\[
X := G(U),
\hspace{0.5cm}
\overline X := \overline G (U),
 \]
where $ U \sim \mathcal U ([0,1]) $, $ G(u) = \sum_{j=1}^{|K|} k_j \ind { a_{j-1} < u \leq a_{j} }$ and $ \overline G (u) = \sum_{j=1}^{|K|} k_j \ind { \overline a_{j-1} < u \leq \overline a_j }$ for all $ u\in [0,1]$. Then, we have
\[
\bb{P}(X \neq \overline X) \leq \sum_{j=1}^{|K|-1}|a_{j}-\overline a_{j}|.
\]
\end{lemme}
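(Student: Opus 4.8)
The plan is to use the fact that $X$ and $\overline X$ are built from the \emph{same} uniform variable $U$, which is exactly the inverse-transform coupling, and to reduce everything to comparing the ``ranks'' of the two outcomes. For $u\in[0,1]$ let $R(u)$ be the index $j$ with $a_{j-1}<u\le a_j$, so that $X=k_{R(U)}$, and likewise let $\overline R(u)$ be the index with $\overline a_{j-1}<u\le \overline a_j$, so that $\overline X=k_{\overline R(U)}$. Since the $k_j$ are distinct, $\{X\neq\overline X\}=\{R(U)\neq\overline R(U)\}$, and the problem becomes one of comparing two integer-valued step functions of $U$. The elementary identities that drive the argument are
\[
\{R(U)\le j\}=\{U\le a_j\},\qquad \{\overline R(U)\le j\}=\{U\le \overline a_j\},
\]
valid (up to a $\bb P$-null set) for each $j$, because $a_j$ (resp. $\overline a_j$) is precisely the threshold below which $U$ produces one of the first $j$ values.

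The main step, and the one I expect to be the only genuine obstacle, is the combinatorial inclusion
\[
\{R(U)\neq\overline R(U)\}\subseteq \bigcup_{j=1}^{|K|-1}\Big(\{U\le a_j\}\,\triangle\,\{U\le \overline a_j\}\Big),
\]
where $\triangle$ denotes symmetric difference. To prove it I would argue pointwise: if $R(U)\neq\overline R(U)$, then one is strictly smaller, say $R(U)<\overline R(U)$ (the other case being symmetric). Setting $j:=R(U)$, we have $1\le j\le|K|-1$ (since $\overline R(U)>j$ forces $j<|K|$), and then $R(U)\le j$ while $\overline R(U)>j$; equivalently $U\le a_j$ but $U>\overline a_j$, so $U$ lies in the symmetric difference at level $j$. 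This is where the summation range $1,\ldots,|K|-1$ of the statement enters naturally, and where one must check that a disagreement of the two ranks is always witnessed by at least one threshold level.

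It then remains to apply the union bound together with the value of each term. Since $U$ is uniform, for every $j$
\[
\bb P\big(\{U\le a_j\}\,\triangle\,\{U\le \overline a_j\}\big)=a_j+\overline a_j-2\min(a_j,\overline a_j)=|a_j-\overline a_j|.
\]
Summing over $j=1,\ldots,|K|-1$ and using subadditivity of $\bb P$ yields
\[
\bb P(X\neq\overline X)\le \sum_{j=1}^{|K|-1}|a_j-\overline a_j|,
\]
which is the claim. One may note that the level $j=|K|$ would contribute nothing, as $a_{|K|}=\overline a_{|K|}=1$, consistent with the sum stopping at $|K|-1$.
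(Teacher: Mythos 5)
Your proof is correct, and it takes a genuinely different route from the paper's. The paper works with the agreement event: it writes $\bb P(X=\overline X)=\sum_{j}\bb P\bigl(U\in\,]a_{j-1},a_j]\cap]\overline a_{j-1},\overline a_j]\bigr)$, lower-bounds each term by $a_j\smin\overline a_j-a_{j-1}\smax\overline a_{j-1}$ (using that the positive part dominates), and then inserts $a_j\smax\overline a_j$ to telescope, the boundary values $a_0=\overline a_0=0$ and $a_{|K|}=\overline a_{|K|}=1$ giving $\bb P(X=\overline X)\geq 1-\sum_{j=1}^{|K|-1}|a_j-\overline a_j|$. You instead work with the disagreement event directly, covering $\{R(U)\neq\overline R(U)\}$ by the symmetric differences $\{U\le a_j\}\,\triangle\,\{U\le\overline a_j\}$ for $1\le j\le |K|-1$ and applying the union bound; your pointwise witness argument (take $j$ equal to the smaller of the two ranks) is complete, and the null-set caveat at $U=0$ is correctly flagged. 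What your version buys is that it avoids both the case split on empty intersections and the telescoping manipulation, and it transfers verbatim to countable $K$ or to general inverse-transform couplings; what the paper's version buys is an explicit accounting of the overlap structure of the two partitions, which makes the equality case visible. Both arguments rest on the same coupling through the common uniform $U$ and yield the identical bound.
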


\begin{proof}[Proof of Lemma \ref{strong_error_lemme1}]
By definition of $X$ and $\overline X$ and since the intervals $] a_{j-1} , a_j ] \cap ] \overline a_{j-1} , \overline a_j]$ are disjoints for $j=1,\ldots,K$, we have
\begin{equation*}
\bb P ( X = \overline X ) 
=
\sum_{j=1}^{|K|} \bb P \Big ( U \in { ] a_{j-1} , a_j ] \cap ] \overline a_{j-1} , \overline a_j] } \Big ).
\end{equation*}

\noindent Moreover, for all $1 \leq j \leq |K|$, we have
\begin{equation*}
\bb P \Big ( U \in { ] a_{j-1} , a_j ] \cap ] \overline a_{j-1} , \overline a_j] } \Big )
=\left\{
\begin{array}{ll}
0 \hfill \text{ if } ] a_{j-1} , a_j ] \cap ] \overline a_{j-1} , \overline a_j] = \emptyset, \\
a_j \smin \overline a_j - a_{j-1} \smax \overline a_{j-1} \hfill \text{ if } ] a_{j-1} , a_j ] \cap ] \overline a_{j-1} , \overline a_j] \neq \emptyset.
\end{array}
\right.
\end{equation*}

\noindent Thus, denoting by $x^+ := \max(x,0)$ the positive part of $x \in \bb R$ and using that $x^+ \geq x$, we obtain
\begin{equation*}
\bb P ( X = \overline X )
\geq
\sum_{j=1}^{|K|} ( a_j \smin \overline a_j - a_{j-1} \smax \overline a_{j-1} ).
\end{equation*}

\noindent Adding and subtracting $ a_j \smax \overline a_j$ in the the above sum yields
\begin{equation*}
\bb P ( X = \overline X )
\geq
\sum_{j=1}^{|K|} (a_j \smax \overline a_j - a_{j-1} \smax \overline a_{j-1}) + \sum_{j=1}^{|K|}  (a_j \smin \overline a_j - a_j \smax \overline a_j).
\end{equation*}

\noindent The first sum above is a telescopic sum. Since $a_{|K|} = \overline a_{|K|} = 1$ and $a_{0} = \overline a_{0} = 0$, we have $\bb P ( X = \overline X ) \geq 1 - \sum_{j=1}^{|K|-1} | a_j - \overline a_j |$.

\end{proof}

\begin{lemme}\label{lemme_product}
Let $(a_n, n \geq 1)$ and $(b_n, n \geq 1)$ be two real-valued sequences. For all $n \geq 1$, we have 
\[
\prod_{i=1}^n a_i - \prod_{i=1}^n b_i = \sum_{i=1}^n \left( a_i - b_i \right) \prod_{j=i+1}^n a_j \prod_{j=1}^{i-1} b_j
\] 
\end{lemme}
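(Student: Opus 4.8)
The plan is to realise the difference of products as a telescoping sum over a family of hybrid products that interpolate between $\prod_{i=1}^n a_i$ and $\prod_{i=1}^n b_i$ by swapping one factor at a time. Concretely, for $0 \le i \le n$ I would set
\[
P_i := \prod_{j=1}^{i} b_j \prod_{j=i+1}^{n} a_j,
\]
with the usual convention that an empty product equals $1$, so that $P_0 = \prod_{i=1}^n a_i$ and $P_n = \prod_{i=1}^n b_i$. The quantity on the left-hand side of the claimed identity is then exactly $P_0 - P_n$.

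Next I would write $P_0 - P_n = \sum_{i=1}^n (P_{i-1} - P_i)$ as a telescoping sum and evaluate each increment. Since $P_{i-1}$ and $P_i$ share the factors $\prod_{j=1}^{i-1} b_j$ and $\prod_{j=i+1}^n a_j$ and differ only in whether the $i$-th factor is $a_i$ or $b_i$, factoring out these common terms gives
\[
P_{i-1} - P_i = \left( \prod_{j=1}^{i-1} b_j \right) \left( \prod_{j=i+1}^{n} a_j \right) (a_i - b_i).
\]
Substituting this into the telescoping sum yields precisely the announced formula, with the summand $(a_i - b_i) \prod_{j=i+1}^n a_j \prod_{j=1}^{i-1} b_j$.

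The argument is entirely elementary and I expect no genuine obstacle; the only point requiring care is the bookkeeping of the empty-product conventions at the endpoints $i=1$ (where $\prod_{j=1}^{0} b_j = 1$) and $i=n$ (where $\prod_{j=n+1}^{n} a_j = 1$), so that the telescoping collapses correctly to $P_0 - P_n$. Alternatively the identity can be established by a direct induction on $n$, writing $\prod_{i=1}^{n+1} a_i - \prod_{i=1}^{n+1} b_i = a_{n+1}\big( \prod_{i=1}^n a_i - \prod_{i=1}^n b_i \big) + (a_{n+1}-b_{n+1})\prod_{i=1}^n b_i$ and applying the inductive hypothesis to the first term; but the telescoping proof is cleaner and makes the combinatorial structure of the summand transparent.
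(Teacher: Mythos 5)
Your proposal is correct and complete. The paper itself dispatches this lemma with the single line ``By induction,'' whereas you give a direct telescoping argument: interpolating between the two products via the hybrids $P_i = \prod_{j=1}^{i} b_j \prod_{j=i+1}^{n} a_j$, writing $P_0 - P_n = \sum_{i=1}^n (P_{i-1}-P_i)$, and factoring each increment. The two routes are essentially equivalent in content --- your inductive sketch at the end, $\prod_{i=1}^{n+1} a_i - \prod_{i=1}^{n+1} b_i = a_{n+1}\bigl(\prod_{i=1}^n a_i - \prod_{i=1}^n b_i\bigr) + (a_{n+1}-b_{n+1})\prod_{i=1}^n b_i$, is exactly what the paper's induction would unwind into --- but the telescoping version has the advantage of producing the summand $(a_i-b_i)\prod_{j=i+1}^n a_j \prod_{j=1}^{i-1} b_j$ in closed form in one step, which makes it transparent why this particular arrangement of the $a_j$'s after index $i$ and the $b_j$'s before index $i$ appears. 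Your attention to the empty-product conventions at $i=1$ and $i=n$ is the only point where care is needed, and you handle it correctly.
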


\begin{proof}[Proof of Lemma \ref{lemme_product}]
By induction.
\end{proof}

\subsection{Proof of Theorem \ref{thm_strong_cv}}\label{strong_error_x}

First, we write
\begin{equation*}
\begin{split}
&\bb E \left[ | F(\overline x_T) - F(x_T)|^2 \right]
\\&=
\bb E \left[ \ind{\min(T_{\overline \tau^\dagger}, \overline T_{\overline \tau^\dagger}) \leq T} |F(\overline x_T) - F(x_T)|^2  \right]
+\bb E \left[ \ind{\min(T_{\overline \tau^\dagger}, \overline T_{\overline \tau^\dagger})  > T} |F(\overline x_T) - F(x_T)|^2  \right]
\\&=:
\overline P + \overline D,
\end{split}
\end{equation*}

\noindent where $\overline \tau^\dagger$ is defined in Definition \ref{tau*}. The order of the term $\overline P$ is the order of the probability that the discrete processes $(T_n,\theta_n)$ and $(\overline T_n, \overline \theta_n)$ differ on $[0,T]$. The order of the term $\overline D$ is given by the order of the Euler scheme squared because the discrete processes $(T_n,\theta_n)$ and $(\overline T_n,\overline \theta_n)$ are equal on $[0,T]$. 
In the following we prove that $\overline P = O(h)$ and that $\overline D = O(h^2)$.

\bigskip \noindent \textit{Step 1: estimation of $\overline P$.}  The function $F$ being bounded we have $ \overline P \leq 4 M_F^2 \bb P \left( \min(T_{\overline \tau^\dagger},\overline T_{\overline \tau^\dagger}) \leq T \right)$ where $M_F>0$. Moreover, for $k \geq 1$, $\left\{\overline \tau^\dagger = k \right\} = \left\{ \overline \tau^\dagger > k-1  \right\} \bigcap \left\{ \left(\tau_k,\theta_k \right) \neq \left(\overline \tau_k,\overline \theta_k \right) \right\}$. Hence
\begin{align*}
\bb P \left( \min(T_{\overline \tau^\dagger},\overline T_{\overline \tau^\dagger}) \leq T \right)
&=
\sum_{k \geq 1} \E \left[ \ind{\min(T_k,\overline T_k) \leq T} \ind{\overline \tau^\dagger = k} \right] \\
&=
\sum_{k \geq 1} \E \left[ \ind{\min(T_k,\overline T_k) \leq T} \ind{\overline \tau^\dagger > k-1}  \ind{\left(\tau_k,\theta_k \right) \neq \left(\overline \tau_k,\overline \theta_k \right)} \right] \\
&\leq
\sum_{k \geq 1} \overline J_k + 2 \overline I_k
\end{align*}

\noindent where
\begin{equation}\label{I_J}
\overline J_k
:=
\E \left[\ind{\min(T_k,\overline T_k) \leq T} \ind{\overline \tau^\dagger > k-1}   \ind{\tau_k =  \overline \tau_k} \ind{\theta_k \neq \overline \theta_k} \right],
\hspace{0,5cm}
\overline I_k
:=
\E \left[ \ind{\min(T_k,\overline T_k) \leq T} \ind{\overline \tau^\dagger > k-1}  \ind{\tau_k \neq  \overline \tau_k}  \right].
\end{equation}

\noindent We start with $\overline J_k$. 
First note that, for $k \geq 1$, $\{\tau_k = \overline \tau_k\} = \{T_k = \overline T_k\}$ and that on the event $\{T_k =  \overline T_k\}$, we have $\min(T_k,\overline T_k) = T_k$, so that
$\overline J_k
=
\E \left[\ind{T_k \leq T} \ind{\overline \tau^\dagger > k-1}   \ind{\tau_k =  \overline \tau_k} \ind{\theta_k \neq  \overline \theta_k}  \right].
$
We emphasize that it makes no difference in the rest of the proof if we choose $\min(T_k,\overline T_k) = \overline T_k$. Since $\{ \overline \tau^\dagger > k-1 \} = \bigcap_{i=0}^{k-1} \{ (\tau_i,\theta_i) = (\overline \tau_i,\overline \theta_i) \}$, we can rewrite $\overline J_k$ as follows
\begin{equation}\label{JST1}
\sum_{\Gfrac{1 \leq p_1 < \ldots < p_k}{\alpha_1,\ldots,\alpha_{k-1} \in \Theta}}
\E [ \ind{\{\tau_i= \overline \tau_i =p_i, 1\leq i \leq k\}} \ind{\{\theta_i = \overline \theta_i=\alpha_i, 1\leq i \leq k-1\}} \ind{T^*_{p_k} \leq T}  \ind{\theta_k \neq  \overline \theta_k} ].
\end{equation}

\noindent By construction we have $\theta_k = H((\theta_{k-1},\nu_k),V_k)$ and $\overline \theta_k = H((\overline \theta_{k-1},\overline \nu_k),V_k)$. The random variable $\ind{\{\tau_i= \overline \tau_i =p_i, 1\leq i \leq k\}} \ind{\{\theta_i = \overline \theta_i=\alpha_i, 1\leq i \leq k-1\}} \ind{T^*_{p_k} \leq T}$ depends on the vector $(U_i,1 \leq i \leq p_k, T^*_j, 1 \leq j \leq p_k, V_q, 1 \leq q \leq k-1 )$ which is independent of $V_k$. Conditioning by this vector in \eqref{JST1} and applying Lemma \ref{strong_error_lemme1} yields
\begin{align*}
&\E [ \ind{\{\tau_i= \overline \tau_i =p_i, 1\leq i \leq k\}} \ind{\{\theta_i = \overline \theta_i=\alpha_i, 1\leq i \leq k-1\}} \ind{T^*_{p_k} \leq T}  \ind{\theta_k \neq  \overline \theta_k} ] \\
&\leq
\E \left[ \ind{\{\tau_i= \overline \tau_i =p_i, 1\leq i \leq k\}} \ind{\{\theta_i = \overline \theta_i=\alpha_i, 1\leq i \leq k-1\}} \ind{T^*_{p_k} \leq T} \sum_{j=1}^{|\Theta|-1} \left| a_j( \alpha_{k-1},\overline\nu_k) - a_j(\alpha_{k-1},\nu_k) \right| \right].
\end{align*} 

\noindent From the definition of $a_j$ (see \eqref{aj}), the triangle inequality and since $Q$ is $L_Q$-Lipschitz, we have $\sum_{j=1}^{|\Theta|-1} \left| a_j( \alpha_{k-1},\overline\nu_k) - a_j(\alpha_{k-1},\nu_k) \right| \leq \frac{\left( |\Theta|-1 \right) |\Theta|}{2} L_{Q}  |\overline \nu_k -  \nu_k|$. Since we are on the event $\{\tau_i= \overline \tau_i =p_i, 1\leq i \leq k\} \bigcap \{\theta_i = \overline \theta_i=\alpha_i, 1\leq i \leq k-1\}$, the application of Lemma \ref{strong_error_lemme2} yields $|\overline \nu_k - \nu_k| \leq e^{L T^*_{p_k}} k C h$. Thus $\overline J_k \leq C_1 h \E[\ind{T_k \leq T}k]$ where $C_1$ is a constant independent of $h$. Moreover, $\sum_{k \geq 1 }\ind{T_k \leq T} k = \sum_{k=1}^{N_T} k \leq N_T^2 $ and $\E[ N_T^2] \leq \E[ (N^*_T)^2] < +\infty$ so that $\sum_{k \geq 1} \overline J_k = O(h)$.
From the definition of $\overline I_k$  (see \eqref{I_J}), we can write
\begin{equation*}
\begin{split}
\overline I_k
&=
\E \left[ \ind{\min(T_k,\overline T_k) \leq T} \ind{\overline \tau^\dagger > k-1} ( \ind{\tau_k <  \overline \tau_k} + \ind{\tau_k >  \overline \tau_k} ) \right] \\
&=
\E \left[ \ind{T_k \leq T} \ind{\overline \tau^\dagger > k-1}  \ind{\tau_k <  \overline \tau_k} \right]
+
\E \left[ \ind{\overline T_k \leq T} \ind{\overline \tau^\dagger > k-1} \ind{\tau_k >  \overline \tau_k}  \right]\\
&=:
\overline I^{(1)}_k + \overline I^{(2)}_k.
\end{split}
\end{equation*}

\noindent The second equality above follows since $\{\tau_k < \overline \tau_k \} = \{T_k < \overline T_k\}$ and $\{\tau_k > \overline \tau_k \} = \{T_k > \overline T_k\}$. We only treat the term $\overline I^{(1)}_k$, the term $\overline I^{(2)}_k$ can be treated similarly by interchanging the role of $(\tau_k,T_k)$ and $(\overline \tau_k,\overline T_k)$. Just as in the previous case, we can rewrite $\overline I^{(1)}_k$ as follows
\begin{equation}\label{JST2}
\sum_{\Gfrac{1 \leq p_1 < \ldots < p_k}{\alpha_1,\ldots,\alpha_{k-1} \in \Theta}}
\E [ \ind{\{\tau_i= \overline \tau_i =p_i, 1\leq i \leq k-1\}} \ind{\{\theta_i = \overline \theta_i=\alpha_i, 1\leq i \leq k-1\}} \ind{T^*_{p_k} \leq T} \ind{\tau_k =  p_k} \ind{p_k <  \overline \tau_k} ].
\end{equation}

\noindent In \eqref{JST2} we have $\{\tau_k =  p_k\} \cap \{p_k <  \overline \tau_k\} \subseteq \{ \lambda(\alpha_{k-1}, \overline \Phi_{\alpha_{k-1}}(T^*_{p_k}-T^*_{p_{k-1}},\overline \nu_{k-1})) < U_{p_k} \lambda^* \leq \lambda( \alpha_{k-1},  \Phi_{\alpha_{k-1}}(T^*_{p_k}-T^*_{p_{k-1}}, \nu_{k-1})) \}$. The random variable $\ind{\{\tau_i= \overline \tau_i =p_i, 1\leq i \leq k-1\}}$ $\ind{\{\theta_i = \overline \theta_i=\alpha_i, 1\leq i \leq k-1\}}$ $\ind{T^*_{p_k} \leq T}$ depends on $(U_i,1 \leq i \leq p_{k-1}, T^*_j, 1 \leq j \leq p_k, V_q, 1 \leq q \leq k-1 )$ which is independent of $U_{p_k}$. Conditioning by this vector in \eqref{JST2} yields
\begin{align*}
&\E [ \ind{\{\tau_i= \overline \tau_i =p_i, 1\leq i \leq k-1\}} \ind{\{\theta_i = \overline \theta_i=\alpha_i, 1\leq i \leq k-1\}} \ind{T^*_{p_k} \leq T} \ind{\tau_k =  p_k} \ind{p_k <  \overline \tau_k} ]\\
&\leq
\E [ \ind{\{\tau_i= \overline \tau_i =p_i, 1\leq i \leq k-1\}} \ind{\{\theta_i = \overline \theta_i=\alpha_i, 1\leq i \leq k-1\}} \ind{T^*_{p_k} \leq T} \\
&|\lambda(\alpha_{k-1}, \overline \Phi_{\alpha_{k-1}}(T^*_{p_k}-T^*_{p_{k-1}},\overline \nu_{k-1}))-\lambda( \alpha_{k-1},  \Phi_{\alpha_{k-1}}(T^*_{p_k}-T^*_{p_{k-1}}, \nu_{k-1})) | ]
\end{align*}

\noindent Using the Lipschitz continuity of $\lambda$ then Lemma \ref{strong_error_lemme2} we get that $\overline I^{(1)}_k \leq C_2 h \E[\ind{T_k \leq T} k]$ where $C_2$ is a constant independent of $h$. Concerning the term $\overline I^{(2)}_k$, we will end with the estimate $\overline I^{(2)}_k \leq C_2 h \E[\ind{\overline T_k \leq T} k]$. We conclude in the same way as in the estimation of $\overline J_k$ above that $\sum_{k \geq 1 } \overline I_k = O(h)$.

\bigskip \noindent \textit{Step 2: estimation of $\overline D$.} Note that for $n \geq 0$ we have
$
\{N_T = n \} \cap \{ \min( T_{\overline \tau^\dagger}, \overline T_{\overline \tau^\dagger}) > T \}
=
\{N_T = n \} \cap \{ \overline N_T = n \} \cap \{ \overline \tau^\dagger > n \},
$
\noindent where we can interchange the role of $\{N_T = n \}$ and $\{\overline N_T = n \}$. Thus, using the partition $ \{N_T = n,n \geq 0\}$, we have
\begin{equation*}
\overline D
=
\sum_{n \geq 0} \E \left[ \ind{N_T=n} \ind{\overline N_T=n} \ind{\overline \tau^\dagger > n} \left| F(\theta_n, \overline \Phi_{ \theta_n}(T- T_n,\overline \nu_n)) - F(\theta_n,\Phi_{\theta_n}(T-T_n,\nu_n)) \right|^2  \right] 
\end{equation*}

\noindent The application of the Lipschitz continuity of $F$ and of Lemma \ref{strong_error_lemme2} yields
\begin{equation*}
\left| F(\theta_n, \overline \Phi_{\theta_n}(T-T_n,\overline \nu_n)) - F(\theta_n,\Phi_{\theta_n}(T-T_n,\nu_n)) \right|
\leq 
L_F e^{ L T} (n+1)Ch.
\end{equation*} 

\noindent Then, we have $\overline D \leq C_3 h^2 \sum_{n \geq 0} \E \left[ \ind{N_T=n} (n+1)^2 \right]$ where $C_3$ is a constant independent of $h$. Since $\sum_{n \geq 0} \E \left[ \ind{N_T=n} (n+1)^2 \right] = \E[(N_T+1)^2] \leq \E[(N^*_T+1)^2] < +\infty$, we conclude that $\overline D = O(h^2)$.

\qed

\subsection{Proof of Theorem \ref{thm_strong_cv_tilde}}\label{strong_error_tilde_x}
First we reorder the terms in $\tilde R_T$. We write $\tilde R_T = \tilde{\mathsf Q}_T \tilde{\mathsf S}_T \tilde{\mathsf H}_T$ where
\begin{align}
& \tilde{\mathsf Q}_T = \prod_{l=1}^{\tilde N_T} \frac{Q(\tilde x_{T^*_{\tilde \tau_l}}^-,\tilde \theta_l)}{\tilde Q(\tilde \theta_{l-1},\tilde \theta_l)}\label{QT},\\
& \tilde{\mathsf S}_T= \prod_{l=1}^{\tilde N_T} \frac{\lambda(\tilde \theta_{l-1},\Phi_{\tilde \theta_{l-1}}(T^*_{\tilde \tau_l}-T^*_{\tilde \tau_{l-1}},\tilde \nu_{l-1}))}{\lambda^*} \prod_{k=\tilde \tau_{l-1}+1}^{\tilde \tau_l} (1 - \frac{\lambda(\tilde \theta_{l-1},\Phi_{\tilde \theta_{l-1}}(T^*_{k}-T^*_{\tilde \tau_{l-1}},\tilde \nu_{l-1}))}{\lambda^*})\label{ST} \\
&\prod_{l=\tilde \tau_{\tilde N_T}+1}^{N^*_T} (1 - \frac{\lambda(\tilde \theta_{\tilde N_T},\Phi_{\tilde \theta_{\tilde N_T}}(T^*_{l}-T^*_{\tilde \tau_{\tilde N_T}},\tilde \nu_{\tilde N_T}))}{\lambda^*}), \nonumber \\
& \tilde{\mathsf H}_T = \prod_{l=1}^{\tilde N_T} \left( \frac{\tilde \lambda(\tilde \theta_{l-1})}{\lambda^*} (1-\frac{\tilde \lambda(\tilde \theta_{l-1})}{\lambda^*})^{\tilde \tau_l - \tilde \tau_{l-1}-1} \right)^{-1} \left( (1-\frac{\tilde \lambda(\tilde \theta_{\tilde N_T})}{\lambda^*})^{N^*_T - \tilde \tau_{\tilde N_T}} \right)^{-1}. \label{HT}
\end{align} 

\noindent Likewise we reorder the terms in $\underline{\tilde R}_T$ writing $\underline{\tilde R}_T = \underline{\tilde{\mathsf Q}}_T \underline{\tilde{\mathsf S}}_T \tilde{\mathsf H}_T$ where $\underline{\tilde{\mathsf Q}}_T$ and $\underline{\tilde{\mathsf S}}_T$ are defined as \eqref{QT} and \eqref{ST} replacing $\tilde x$ and $\Phi$ by $\underline{\tilde x}$ and $\overline \Phi$. Since the processes $(\tilde \theta_n)$ and $(\tilde \tau_n)$ do not depend on $\Phi$ or $\overline \Phi$, the term $\tilde{\mathsf H}$ is the same in $\tilde R$ and $\underline{\tilde R}$ .
To prove Theorem \ref{thm_strong_cv_tilde}, let us decompose the problem and write
\begin{equation*}
\begin{split}
|F(\underline{\tilde x}_T) \underline{\tilde R}_T - F(\tilde x_T) \tilde R_T | 
&= 
| (F(\underline{\tilde x}_T) - F(\tilde x_T)) \underline{\tilde R}_T + (\underline{\tilde R}_T- \tilde R_T) F(\tilde x_T) | \\
&\leq
| F(\underline{\tilde x}_T) - F(\tilde x_T) | |\underline{\tilde R}_T| + |\underline{\tilde R}_T- \tilde R_T| |F(\tilde x_T)|,
\end{split}
\end{equation*}

\noindent so that
\begin{equation*}
\begin{split}
\bb E \left[ |F(\underline{\tilde x}_T) \underline{\tilde R}_T - F(\tilde x_T) \tilde R_T|^2 \right]
&\leq
2 \E \left[ |  F(\underline{\tilde x}_T) - F(\tilde x_T) |^2 |\underline{\tilde R}_T|^2 \right]
+
2 \E \left[ |\underline{\tilde R}_T- \tilde R_T|^2 |F(\tilde x_T) |^2 \right]
\\&=: 2 \overline D + 2 \overline C.
\end{split}
\end{equation*}

\noindent In the following we show  that $\overline C = O(h^2)$ and that $\overline D = O(h^2)$.

\bigskip \noindent \textit{Step 1: estimation of $\overline C$.} The function $F$ being bounded we have $\overline C \leq M_F^2 \E \left[ |\underline{\tilde R}_T - \tilde R_T|^2  \right]$ where $M_F$ is a positive constant. Moreover, for all $\theta \in \Theta$, we have $(1-\tilde \lambda(\theta)/\lambda^*)^{-1} \leq (1-\tilde \lambda_{\text{max}}/\lambda^*)^{-1}$ and $(\tilde \lambda(\theta)/\lambda^*)^{-1} \leq (\tilde \lambda_\text{min}/\lambda^*)^{-1}$. Thus, $\tilde H_T \leq \left( \frac{\tilde \lambda_\text{min}}{\lambda^*}(1-\frac{\tilde \lambda_\text{max}}{\lambda^*}) \right)^{- N^*_T}$ and using the definition of $\tilde R$ and $\underline{\tilde R}$ (see \eqref{QT}, \eqref{ST} and \eqref{HT}) we can write
\begin{equation*}
|\underline{\tilde R}_T - \tilde R_T|
\leq
\left( \frac{\tilde \lambda_\text{min}}{\lambda^*}(1-\frac{\tilde \lambda_\text{max}}{\lambda^*}) \right)^{- N^*_T} \left( |\underline{\tilde{\mathsf Q}}_T - \tilde{ \mathsf Q}_T| \tilde{\mathsf S}_T + |\underline{\tilde{\mathsf S}}_T - \tilde{\mathsf S}_T|  \underline{\tilde{\mathsf Q}}_T \right).
\end{equation*} 

\noindent We set $\overline J = |\underline{\tilde{\mathsf Q}}_T - \tilde{ \mathsf Q}_T| \tilde{\mathsf S}_T$ and $\overline I = |\underline{\tilde{\mathsf S}}_T - \tilde{\mathsf S}_T|  \underline{\tilde{\mathsf Q}}_T$. To provide the desired estimate for $\overline C$, we proceed as follows. First, we work $\omega$ by $\omega$ to determine (random) bounds for $\overline J$ and $\overline I$ from which we deduce a (random) bound for $|\underline{\tilde R}_T - \tilde R_T|$. Finally, we take the expectation. We start with $\overline I$. 
For all $(\theta,\nu) \in E$ and for all $t \geq 0$ we have, from Assumption \ref{hyp_lambda1}, that $1-\lambda(\theta,\Phi_\theta(t,\nu))/\lambda^* \leq 1$ and $\lambda(\theta,\Phi_\theta(t,\nu))/\lambda^* \leq 1$. Then, using Lemma \ref{lemme_product} (twice) we have
\begin{equation*}
|\underline{\tilde{\mathsf S}}_T - \tilde{\mathsf S}_T|
\leq
\frac{1}{\lambda^*} \sum_{l=1}^{\tilde N_T+1} \sum_{k=\tilde \tau_{l-1}+1}^{\tilde \tau_l \smin N^*_T} |\lambda(\tilde \theta_{l-1}, \overline \Phi_{\tilde \theta_{l-1}}(T^*_k-T^*_{\tilde \tau_{l-1}},\underline{\tilde \nu}_{l-1}))-\lambda(\tilde \theta_{l-1},  \Phi_{\tilde \theta_{l-1}}(T^*_k-T^*_{\tilde \tau_{l-1}},{\tilde \nu}_{l-1}))|.
\end{equation*}

\noindent Using the Lipschitz continuity of $\lambda$ and Lemma \ref{strong_error_lemme2}, we find that, for all $l=1,\ldots,\tilde N_T+1$ and $k=\tilde \tau_{l-1}+1,\ldots,\tilde \tau_l \smin N^*_T$,
\begin{align*}
|\lambda(\tilde \theta_{l-1}, \overline \Phi_{\tilde \theta_{l-1}}(T^*_k-T^*_{\tilde \tau_{l-1}},\underline{\tilde \nu}_{l-1}))-\lambda(\tilde \theta_{l-1},  \Phi_{\tilde \theta_{l-1}}(T^*_k-T^*_{\tilde \tau_{l-1}},{\tilde \nu}_{l-1}))|
\leq
e^{LT} C h l.
\end{align*}

\noindent Moreover, for all $l=1,\ldots,\tilde N_T+1$ we have $\tilde \tau_l \smin N^*_T-\tilde \tau_{l-1} \leq N^*_T$ so that $|\underline{\tilde{\mathsf S}}_T - \tilde{\mathsf S}_T| \leq N^*_T (N^*_T+1)^2 C_1 h$ where $C_1$ is a positive constant independent of $h$. Finally, since $\underline{\tilde{\mathsf Q}}_T \leq \rho^{-N^*_T}$ we have
\begin{equation}\label{overlineI}
\overline I 
\leq 
\rho^{-N^*_T} N^*_T (N^*_T+1)^2 C_1 h.
\end{equation}

\noindent Now, consider $\overline J$. Note that from Assumption \ref{hyp_lambda1} we have $\tilde{\mathsf S}_T \leq 1$. We use the same type of arguments as for $\overline I$. That is, we successively use Lemma \ref{lemme_product}, the Lipschitz continuity of $Q$ and Lemma \ref{strong_error_lemme2} to obtain
\begin{equation}\label{overlineJ}
\overline J \leq \rho^{- N^*_T} (N^*_T)^2 C_2 h,
\end{equation}

\noindent where $C_2$ is a positive constant independent of $h$.
Then, we derive from the previous estimates \eqref{overlineI} and \eqref{overlineJ} that
\begin{equation*}
|\underline{\tilde R}_T - \tilde R_T|
\leq
\Xi_1(N^*_T) C_3 h,
\end{equation*} 
 
\noindent where $\Xi_1(n) = \left( \rho \frac{\tilde \lambda_\text{min}}{\lambda^*}(1-\frac{\tilde \lambda_\text{max}}{\lambda^*}) \right)^{- n} n (n+1)^2$ and $C_3=\max(C_1,C_2)$. Finally, we have $\E[|\underline{\tilde R}_T - \tilde R_T|^2] \leq C_3 h^2 \E [ \Xi_1(N^*_T)^2 ]$. Since $\E [ \Xi_1(N^*_T)^2 ] < +\infty$ we conclude that $\overline C = O(h^2)$.

\bigskip \noindent \textit{Step 2: estimation of $\overline D$.} Recall that $\tilde x_T=(\tilde \theta_{\tilde N_T}, \Phi_{\tilde \theta_{\tilde N_T}}(T-\tilde T_{\tilde N_T},\tilde \nu_{\tilde N_T}))$ and $\underline{\tilde x}_T=(\tilde \theta_{\tilde N_T}, \overline \Phi_{\tilde \theta_{\tilde N_T}}(T-\tilde T_{\tilde N_T},\underline{\tilde \nu}_{\tilde N_T}))$. Then, using the Lipschitz continuity of $F$, Lemma \ref{strong_error_lemme2} and since $\tilde N_T \leq N^*_T$ we get
\begin{equation*}
|F(\underline{\tilde x}_T) - F(\tilde x_T)|
\leq
L_F e^{L T} (\tilde N_T+1)C h
\leq
L_F e^{L T} (N^*_T+1)C h.
\end{equation*}

\noindent Moreover, $|\underline{\tilde R}_T| \leq \left( \rho \frac{\tilde \lambda_\text{min}}{\lambda^*}(1-\frac{\tilde \lambda_\text{max}}{\lambda^*}) \right)^{-N^*_T}$ so that $\overline D \leq C_4 h^2 \E [\Xi_2(N^*_T)^2 ]$ where $C_4$ is a positive constant independent of $h$ and $\Xi_2(n) = (n+1)\left( \rho \frac{\tilde \lambda_\text{min}}{\lambda^*}(1-\frac{\tilde \lambda_\text{max}}{\lambda^*}) \right)^{-n}$. Since $\E [\Xi_2(N^*_T)^2 ] < +\infty$ we conclude that $\overline D = O(h^2)$.

\qed

\section{Weak error expansion}\label{section_weak_expansion}

In this section we are interested in a weak error expansion for the PDMP $(x_t)$ of section \ref{section_application} and its associated Euler scheme $(\overline x_t)$. First of all, we recall from \cite{davarticle} that the generator $\cali A$ of the process $(t,x_t)$ which acts on functions $g$ defined on $\bb R_+ \times E$ is given by
\begin{equation}\label{PDMP_generator}
\cali A g(t,x) = \gt(t,x) + f(x) \gnu(t,x) + \lambda(x) \int_E ( g(t,y) - g(t,x) ) Q(x,dy),
\end{equation}

\noindent where for notational convenience we have set $\gnu(t,x) := \frac{\partial g}{ \partial \nu}(t,\theta,\nu)$, $\gt(t,x) := \frac{\partial g}{ \partial t}(t,x)$ and $f(x) = f_\theta(\nu)$ for all $x =(\theta,\nu) \in E$. Below, we state the assumptions and the main theorem of this section. Its proof which is inspired by \cite{TaTu90} (see also \cite{pages2018} or \cite{talay}) is delayed in section \ref{errfaible}. 
\begin{hyp}\label{hyp_derivability}
For all $ \theta \in \Theta $ and for all $A \in \cali B(\Theta)$, the functions $\nu \mapsto Q \left( (\theta,\nu), A \right)$, $\nu \mapsto \lambda\left(\theta,\nu \right)$ and $\nu \mapsto f_\theta\left(\nu \right)$ are bounded and twice continuously differentiable with bounded derivatives.
\end{hyp}

\begin{hyp}\label{hyp_u}
The solution $u$ of the integro differential equation 
\begin{equation}\label{integro_diff}
\left\{
\begin{array}{ll}
\cali A u(t,x) = 0, \hspace{0.5cm} (t,x) \in [0,T[ \times E, \\
u(T,x) = F(x), \hspace{0.5cm} x \in E,
\end{array}
\right.
\end{equation}

\noindent with $F : E \rightarrow \bb R$ a bounded function and $\cali A$ given by (\ref{PDMP_generator}) is such that for all $\theta \in \Theta$, the function $(t,\nu)  \mapsto u(t,\theta,\nu)$ is bounded and two times differentiable with bounded derivatives. Moreover the second derivatives of $(t,\nu)  \mapsto u(t,\theta,\nu)$ are uniformly Lipschitz in $\theta$.
\end{hyp}

\begin{theorem}\label{weak_error_thm}
Let $(x_t, t \in [0,T])$ be a PDMP and $(\overline x_t, t \in [0,T])$ its approximation constructed in section \ref{section_application} with $x_0 = \overline x_0 = x$ for some $x \in E$. Under assumptions \ref{hyp_derivability}. and \ref{hyp_u}. for any bounded function $F: E \rightarrow \bb R$ there exists a constant $c_1$ independent of $h$ such that
\begin{equation}\label{weak_expansion}
\E [F(\overline x_T)] - \E [F(x_T)] 
=
h c_1 + O(h^2).
\end{equation}
\end{theorem}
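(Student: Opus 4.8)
The plan is to adapt the Talay--Tubaro martingale argument \cite{TaTu90} to the PDMP setting, exploiting the Feynman--Kac representation furnished by Assumption \ref{hyp_u}. First I would record that the solution $u$ of the integro-differential equation \eqref{integro_diff} satisfies $u(0,x)=\E[F(x_T)]$: since $\cali A u\equiv 0$ and $u$ is bounded, the process $\big(u(t,x_t)\big)_t$ is a bounded martingale along the exact dynamics, so $\E[u(T,x_T)]=u(0,x)$, and as $u(T,\cdot)=F$ this reads $\E[F(x_T)]=u(0,x)$. Using $u(T,\cdot)=F$ once more on the approximate side and $\overline x_0=x$, the weak error becomes
\begin{equation*}
\E[F(\overline x_T)]-\E[F(x_T)]=\E[u(T,\overline x_T)]-u(0,\overline x_0),
\end{equation*}
reducing everything to the increment of $u$ along the approximate trajectory.

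The core step is a Dynkin-type formula for $(\overline x_t)$. Writing $\overline x_s=(\overline\theta_s,\overline\nu_s)$, between jumps $\overline\nu_s$ is the absolutely continuous Euler polygon \eqref{continuous_euler_scheme} with instantaneous velocity $f_{\overline\theta_s}(\overline y_{i(s)})$ frozen at the last grid value $\overline y_{i(s)}$, while the jump intensity and kernel are still $\lambda$ and $Q$ evaluated at the current position $\overline x_s$. Taking the semimartingale decomposition of $u(t,\overline x_t)$, compensating the jump part into a martingale, and taking expectations gives $\E[u(T,\overline x_T)]-u(0,x)=\E\big[\int_0^T\overline{\cali A}_s u(s,\overline x_s)\,ds\big]$, where $\overline{\cali A}_s$ is \eqref{PDMP_generator} with the drift $f_{\overline\theta_s}(\overline\nu_s)$ replaced by the frozen velocity $f_{\overline\theta_s}(\overline y_{i(s)})$. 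Because $\partial_t u$, $\lambda$ and $Q$ enter both operators identically, subtracting the pointwise identity $\cali A u(s,\overline x_s)=0$ leaves only the drift discrepancy:
\begin{equation*}
\E[F(\overline x_T)]-\E[F(x_T)]=\E\left[\int_0^T\big(f_{\overline\theta_s}(\overline y_{i(s)})-f_{\overline\theta_s}(\overline\nu_s)\big)\,\partial_\nu u(s,\overline x_s)\,ds\right].
\end{equation*}
I would emphasize that this needs no Markov property of $(\overline x_t)$, only the semimartingale decomposition valid for any PDP together with integrability, which follows from the boundedness of $u,\partial_\nu u$ in Assumption \ref{hyp_u} and the finiteness of the moments of $N_T\le N^*_T$.

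Next I would extract the leading order by a one-step Taylor expansion. On an Euler sub-interval $[\sigma,\sigma+h]$ with grid value $y$ one has $\overline\nu_s-y=(s-\sigma)f_{\overline\theta_s}(y)$, so by Assumption \ref{hyp_derivability}, $f_{\overline\theta_s}(y)-f_{\overline\theta_s}(\overline\nu_s)=-f'_{\overline\theta_s}(y)f_{\overline\theta_s}(y)(s-\sigma)+O(h^2)$. Replacing $\partial_\nu u(s,\overline x_s)$ by its value at $\sigma$ (an $O(h)$ modification) and integrating $\int_\sigma^{\sigma+h}(s-\sigma)\,ds=h^2/2$, each full step contributes $-\tfrac{h^2}{2}f'_{\overline\theta}f_{\overline\theta}\,\partial_\nu u+O(h^3)$. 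Summing the $O(1/h)$ full steps turns the $(h^2/2)$-weighted Riemann sum into $\tfrac{h}{2}\int_0^T$; the finitely many partial steps adjacent to jumps contribute only $O(h^2)$, and the per-step remainders sum to $O(h^2)$ thanks to the Poisson moment bounds. This yields
\begin{equation*}
\E[F(\overline x_T)]-\E[F(x_T)]=-\frac{h}{2}\,\E\left[\int_0^T f'_{\overline\theta_s}(\overline\nu_s)f_{\overline\theta_s}(\overline\nu_s)\,\partial_\nu u(s,\overline x_s)\,ds\right]+O(h^2).
\end{equation*}

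Finally, to turn the $h$-coefficient into a genuine constant independent of $h$, I would replace $(\overline x_s)$ by the exact $(x_s)$ inside the expectation, and this is exactly where $\overline\tau^\dagger$ (Definition \ref{tau*}) enters. Splitting on $\{\min(T_{\overline\tau^\dagger},\overline T_{\overline\tau^\dagger})>T\}$, where the discrete chains coincide so that $\overline\theta_s=\theta_s$ and $|\overline\nu_s-\nu_s|=O(h)$ by Lemma \ref{strong_error_lemme2}, against its complement, whose probability is $O(h)$ by Step~1 of the proof of Theorem \ref{thm_strong_cv}, the difference between the two path integrals is $O(h)$; multiplied by $h/2$ it is absorbed into $O(h^2)$. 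Hence \eqref{weak_expansion} holds with
\begin{equation*}
c_1=-\frac12\,\E\left[\int_0^T f'_{\theta_s}(\nu_s)f_{\theta_s}(\nu_s)\,\partial_\nu u(s,x_s)\,ds\right].
\end{equation*}
I expect the main obstacle to be the rigorous handling of the random Euler grid in the Taylor summation: the grid points $\overline T_n+kh$ are random and the steps are interrupted by jumps, so the bookkeeping of full versus partial steps and the uniform control of every remainder term, summed against the random number of steps, must be carried out carefully using the integrability of $N^*_T$.
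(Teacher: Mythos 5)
Your proposal is correct and proves the theorem; it shares the paper's skeleton (Feynman--Kac representation of $u$, the It\^o formula \eqref{Ito_approx} for $(\overline x_t)$, a Taylor expansion over the random Euler cells, and the $\overline\tau^\dagger$-coupling of Definition \ref{tau*} to replace $\overline x$ by $x$ in the order-$h$ coefficient), but your handling of the central step is genuinely different and, in my view, preferable. You use the exact pointwise identity $\overline{\cali A}u(s,\overline x_s,\overline x_{\overline \eta(s)})=\cali A u(s,\overline x_s)+\big(f(\overline x_{\overline \eta(s)})-f(\overline x_s)\big)\ux(s,\overline x_s)=\big(f(\overline x_{\overline \eta(s)})-f(\overline x_s)\big)\ux(s,\overline x_s)$ and Taylor-expand this drift discrepancy along one Euler step via $\overline \nu_s-\overline \nu_{\overline \eta(s)}=(s-\overline \eta(s))f(\overline x_{\overline \eta(s)})$. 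The paper instead applies the It\^o formula a second time to $\ut$, $\ux$ and $\cali S u$ on $[\overline \eta(s),s]$ and asserts that the increments $\overline M^{g}_s-\overline M^{g}_{\overline \eta(s)}$ have zero expectation; that amounts to optional sampling at $\overline \eta(s)$, which is \emph{not} a stopping time (the event $\{\overline \eta(s)\le t\}$ anticipates the absence of jumps on $(t,s]$), and indeed pointwise $\overline M^{g}_s-\overline M^{g}_{\overline \eta(s)}=-\int_{\overline \eta(s)}^s\cali S g(r,\overline x_r)\,dr$ since no jump occurs on $(\overline \eta(s),s]$, an order-$h$ contribution. Your route bypasses this entirely, and that is precisely why your constant $c_1=-\tfrac12\,\E\big[\int_0^T f'_{\theta_s}(\nu_s)f_{\theta_s}(\nu_s)\,\ux(s,x_s)\,ds\big]$ differs from the paper's $\tfrac12\E\big[\int_0^T\Psi(s,x_s)\,ds\big]$: the paper's $\Psi$ carries the extra terms $\cali S(\ut)+f\,\cali S(\ux)+\cali S(\cali S u)=-\lambda(x)\int_E(f(y)-f(x))\ux(\cdot,y)Q(x,dy)$ that the discarded martingale increments would have cancelled, and the two expressions coincide only in the jump-free limit, where both reduce to the classical Euler weak-error constant. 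Since the theorem claims no more than the existence of a constant $c_1$ independent of $h$, both arguments establish the statement, but your identification of $c_1$ is the one I would rely on. What remains to be written out carefully is exactly what you flag — the It\^o/Dynkin formula for the non-Markov polygon process (the paper's Theorem \ref{Ito_formulas}) and the control of all remainders summed over the random grid via $\E[(N^*_T)^2]<+\infty$ — plus the small additional step, as in the paper's Step 2, of moving the time argument from $\overline \eta(s)$ to $s$ in the limiting integral, an $O(h)$ change absorbed by the $h/2$ prefactor.
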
 

\begin{Rem}
If $(\tilde x_t)$ is a PDMP whose characteristics $\tilde \lambda, \tilde Q$ satisfy the assumptions of Proposition \ref{DECOMP2} and $(\underline{\tilde x}_t)$ is its approximation we deduce from Theorem \ref{weak_error_thm} that 
\begin{equation}\label{weak_tilde}
\E[F(\underline{\tilde x}_T) \underline{\tilde R}_T] - \E [F(\tilde x_T) \tilde R_T]= h c_1 + O(h^2).
\end{equation} 
\end{Rem}

\subsection{Further results on PDMPs: Itô and Feynman-Kac formulas}\label{Ito_FK_section}



\begin{defi}\label{def_generator_PDMP}
Let us define the following operators which act on functions $g$ defined on $\bb R_+ \times E$.
\begin{equation*}
\cali T g(t,x)
:=
\gt(t,x) + f(x) \gnu(t,x),
\end{equation*}
\begin{equation*}
\cali S g(t,x)
:=
\lambda(x) \int_E ( g(t,y) - g(t,x) ) Q(x,dy).
\end{equation*}
\end{defi}

\noindent From Definition \ref{def_generator_PDMP}, the generator $\cali A$ defined by \eqref{PDMP_generator} reads $\cali A g(t,x) = \cali T g(t,x) + \cali S g(t,x)$. We introduce the random counting measure $p$ associated to the PDMP $(x_t)$ defined by $p([0,t] \times A) := \sum_{n \geq 1} \ind{T_n \leq t} \ind{Y_n \in A}$ for $t \in [0,T]$ and for $A \in \cali B(E)$. The compensator of $p$, noted $ p'$, is given from \cite{davarticle} by 
\begin{equation*}
p'( [0,t] \times A ) = \int_0^t \lambda(x_s) Q(x_s,A) ds.
\end{equation*}

\noindent Hence, $q := p - p'$ is a martingale with respect to the filtration generated by $p$ noted $(\cali F_t^p)_{t \in [0,T]}$. Similarly, we introduce $\overline p$, $\overline p'$, $\overline q$ and $(\cali F_t^{\overline p})_{t \in [0,T]}$ to be the same objects as above but corresponding to the approximation $(\overline x_t)$. The fact that $\overline p'$ is the compensator of $\overline p$ and that $\overline q$ is a martingale derives from arguments of the marked point processes theory, see \cite{Bremaud}.

\begin{defi}\label{def_generators_approx}
Let us define the following operators which act on functions $g$ defined on $\bb R_+ \times E$.
\begin{equation*}
\overline{\cali T} g(t,x,y)
:=
\gt(t,x) + f(y) \gnu(t,x),
\end{equation*}
\begin{equation*}
\overline{\cali A} g(t,x,y)
:=
\overline{\cali T} g(t,x,y) + \cali S g(t,x).
\end{equation*}
\end{defi}

\begin{Rem}
For all functions $g$ defined on $\bb R_+ \times E$, $\overline{\cali T} g(t,x,x) = \cali T g(t,x)$, so that $\overline{\cali A} g(t,x,x) = \cali A g(t,x)$.
\end{Rem}

\noindent The next theorem provides Itô formulas for the PDMP $(x_t)$ and its approximation $(\overline x_t)$. For all $s \in [0,T]$, we set $\overline \eta(s) := \overline T_n + kh$ if $s \in [\overline T_n + kh, (\overline T_n + (k+1)h) \smin \overline T_{n+1}[$ for some $n \geq 0$ and for some $k \in \{0,\ldots, \lfloor (\overline T_{n+1} - \overline T_n)/h \rfloor\}$.
\begin{theorem}\label{Ito_formulas}
Let $(x_t, t \in [0,T])$ and $(\overline x_t, t \in [0,T])$ be a PDMP and its approximation respectively constructed in section \ref{section_application} with $x_0 = \overline x_0 = x$ for some $x \in E$. For all bounded functions $g : \bb R_+ \times E \rightarrow \bb R$ continuously differentiable with bounded derivatives, we have 
\begin{equation}\label{Ito_PDMP}
g(t,x_t) = g(0,x) + \int_0^t  \cali A g (s,x_s) ds + M^g_t,
\end{equation} 

\noindent where $M^g_t := \int_0^t \int_E ( g(s,y) - g(s,x_{s-}) ) q(dsdy)$ is a true $\cali F_t^p$-martingale, and 
\begin{equation}\label{Ito_approx}
g(t,\overline x_t) = g(0,x) + \int_{0}^{t} \overline{\cali A} g(s,\overline x_s,\overline x_{\overline \eta(s)}) ds + \overline M^g_t,
\end{equation}
where, $\overline M^g_t := \int_0^t \int_E ( g(s,y) - g(s,\overline x_{s-}) ) \overline q(dsdy)$ is a true $\cali F_t^{\overline p}$-martingale.
\end{theorem}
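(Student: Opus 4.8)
The plan is to derive both identities from the elementary fact that between two consecutive jump times each process evolves along a (piecewise) smooth deterministic curve, so that the classical fundamental theorem of calculus applies on each piece, and then to compensate the resulting jump sum using the random measures $p,p'$ (resp. $\overline p,\overline p'$) introduced above.

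For the PDMP $(x_t)$, I would fix $t \in [0,T]$ and write $g(t,x_t) - g(0,x)$ as a telescoping sum over the inter-jump intervals $[T_n, T_{n+1} \smin t]$. On each such interval $x_s = (\theta_n, \phi_{\theta_n}(s - T_n, \nu_n))$, where the $\nu$-component is $C^1$ with derivative $f(x_s)$ by the ODE \eqref{ODE} and the $\theta$-component is constant; hence $s \mapsto g(s,x_s)$ is $C^1$ on the interior with $\tfrac{d}{ds} g(s,x_s) = \gt(s,x_s) + f(x_s)\gnu(s,x_s) = \cali T g(s,x_s)$. Summing the continuous contributions yields $\int_0^t \cali T g(s,x_s)\,ds$, while the boundary terms assemble into the jump sum $\sum_{n : T_n \le t} \big(g(T_n, x_{T_n}) - g(T_n, x_{T_n^-})\big) = \int_0^t \int_E \big(g(s,y) - g(s,x_{s^-})\big)\, p(ds\,dy)$. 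Splitting $p = q + p'$ and using the explicit form of the compensator $p'$ recalled above, together with the fact that the jump times form a Lebesgue-null set so that $x_{s^-} = x_s$ for a.e.\ $s$, I would identify $\int_0^t \int_E \big(g(s,y) - g(s,x_{s^-})\big)\,p'(ds\,dy) = \int_0^t \lambda(x_s) \int_E \big(g(s,y) - g(s,x_s)\big) Q(x_s,dy)\,ds = \int_0^t \cali S g(s,x_s)\,ds$. Adding the two contributions gives \eqref{Ito_PDMP} with $M^g_t = \int_0^t \int_E \big(g(s,y) - g(s,x_{s^-})\big)\,q(ds\,dy)$, since $\cali A = \cali T + \cali S$.

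The approximation formula \eqref{Ito_approx} follows the same scheme, the only difference being the shape of the deterministic curve. On an inter-jump interval the continuous component of $\overline x_s$ is the Euler polygon \eqref{continuous_euler_scheme}, which on each grid cell $[\overline\eta(s), \overline\eta(s)+h)$ is affine with slope $f_{\overline\theta_n}(\overline y_k) = f(\overline x_{\overline\eta(s)})$, i.e.\ the drift frozen at the last grid point $\overline\eta(s)$. Thus $s \mapsto g(s,\overline x_s)$ is continuous and piecewise $C^1$, with $\tfrac{d}{ds} g(s,\overline x_s) = \gt(s,\overline x_s) + f(\overline x_{\overline\eta(s)}) \gnu(s,\overline x_s) = \overline{\cali T} g(s,\overline x_s, \overline x_{\overline\eta(s)})$ at every point that is neither a jump time nor a grid point. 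Summing over the inter-jump intervals and the grid cells they contain produces $\int_0^t \overline{\cali T} g(s,\overline x_s, \overline x_{\overline\eta(s)})\,ds$ plus the jump sum; the latter is compensated exactly as before, and using that $\overline p'$ is the compensator of $\overline p$ I would rewrite the compensator integral as $\int_0^t \cali S g(s,\overline x_s)\,ds$, yielding \eqref{Ito_approx} with $\overline{\cali A} = \overline{\cali T} + \cali S$.

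It remains to check that $M^g$ and $\overline M^g$ are genuine martingales rather than merely local ones. Since $g$ is bounded, the integrand $g(s,y) - g(s,x_{s^-})$ is bounded by $2\|g\|_\infty$; as the total mass of the compensator on $[0,T]$ is controlled by $\E[p'([0,T]\times E)] = \E[\int_0^T \lambda(x_s)\,ds] \le \lambda^* T < +\infty$ (and likewise for $\overline p'$), the stochastic integrals against $q$ and $\overline q$ are square-integrable martingales by the standard $L^2$-theory for integrals with respect to compensated random measures, see \cite{Bremaud}. I expect the point requiring the most care to be the identification of the Lebesgue integral in the approximate case: one must keep track of the two nested partitions — the random jump grid $(\overline T_n)$ and the deterministic Euler grid $(\overline\eta(s))$ — to be sure that the frozen drift $f(\overline x_{\overline\eta(s)})$ is exactly what appears in $\overline{\cali T}$, and that the finitely many grid points and jump times in $[0,T]$, being Lebesgue-negligible, do not contribute to the absolutely continuous part.
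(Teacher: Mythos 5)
Your proposal is correct and follows essentially the same route as the paper: the paper cites Davis for \eqref{Ito_PDMP} and proves \eqref{Ito_approx} by exactly the decomposition you describe — telescoping over the inter-jump intervals, refining over the Euler grid cells $[\overline T_k + ih, (\overline T_k+(i+1)h)\smin \overline T_{k+1}]$, applying the fundamental theorem of calculus with the frozen drift $f_\theta(\overline y_i(x))$, and compensating the jump sum via $\overline p' $. Your additional remarks on the true-martingale property (bounded integrand, compensator mass bounded by $\lambda^* T$) make explicit what the paper delegates to the marked point process references.
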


\begin{proof}[Proof of Theorem \ref{Ito_formulas}]
The proof of \eqref{Ito_PDMP} is given in \cite{davarticle}. We prove \eqref{Ito_approx} following the same arguments. Since $\overline q = \overline p - \overline p'$, we have
\begin{equation*}
\overline M^g_t
=
\sum_{k \geq 1 } \ind{\overline T_k \leq t} \left( g(\overline T_k, \overline x_{\overline T_k}) - g(\overline T_k, \overline x_{\overline T_k}^-)  \right) - \int_0^t \cali S g(s,\overline x_s)ds.
\end{equation*}

\noindent Consider the above sum. As in \cite{davarticle}, we write, on the event $\{\overline N_t = n\}$, that 
\begin{equation*}
\begin{split}
&\sum_{k \geq 1 } \ind{\overline T_k \leq t} \left( g(\overline T_k, \overline x_{\overline T_k}) - g(\overline T_k, \overline x_{\overline T_k}^-)  \right)
\\&=
g(t,\overline x_t) - g(0,x) - \left[ g(t,\overline x_t) - g(\overline T_n, \overline x_{\overline T_n}) + \sum_{k=0}^{n-1} g(\overline T_{k+1}, \overline x_{\overline T_{k+1}}^-) - g(\overline T_{k}, \overline x_{\overline T_{k}}) \right].
\end{split}
\end{equation*}

\noindent For all $k \leq n-1$, we decompose the increment $g(\overline T_{k+1},\overline x_{\overline T_{k+1}}^-) - g(\overline T_k,\overline x_{\overline T_k})$ as a sum of increments on the intervals $[\overline T_k + ih, (\overline T_k + (i+1)h)\smin \overline T_{k+1} ] \subset [\overline T_k, \overline T_{k+1} ]$. Without loss of generality we are led to consider increments of the form $g(t,\theta,\overline \phi_\theta(t,\nu)) - g(ih,\theta,\overline y_i(x))$ for some $i \geq 0$, $t \in [ih,(i+1)h]$ and for all $x = (\theta,\nu) \in E$ where we recall that $\overline \phi$ is defined by \eqref{continuous_euler_scheme}. The function $g$ is smooth enough to write
\begin{equation*}
g(t,\theta,\overline \phi_\theta(t,\nu)) - g(ih,\theta,\overline y_i(x))
=
\int_{ih}^{t} \left( \gt + f_{\theta}(\overline y_i(x)) \gx \right)(s,\theta,\overline \phi_{\theta}(s, \nu) ) ds.
\end{equation*}

\noindent Then, the above arguments together with definition \ref{def_generators_approx} yields
\begin{equation*}
g(t,\overline x_t) - g(\overline T_{n},\overline x_{\overline T_{n}}) + \sum_{k=0}^{n-1} g(\overline T_{k+1},\overline x_{\overline T_{k+1}}^-) - g(\overline T_k,\overline x_{\overline T_k})
=
\int_{0}^t \overline{\cali T} g (s,\overline x_s,\overline x_{\overline \eta(s)}) ds.
\end{equation*}
\end{proof}

\noindent The following theorem gives us a way to represent the solution of the integro-differential equation \eqref{integro_diff} as the conditional expected value of a functional of the terminal value of the PDMP $(x_t)$. It plays a key role in the proof of Theorem \ref{weak_error_thm}.

\begin{theorem}[PDMP's Feynman-Kac formula \cite{dav}]\label{FK_PDMP}
Let $F : E \rightarrow \bb R$ be a bounded function. Then the integro-differential equation 
\eqref{integro_diff} has a unique solution $u : \bb R_+ \times E \rightarrow \bb R$ given by
\begin{equation*}
u(t,x) = \E [ F(x_T) | x_t = x], \hspace{0.5cm} (t,x) \in [0,T] \times E.
\end{equation*}
\end{theorem}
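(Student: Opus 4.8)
The plan is to prove the representation by the martingale method, the available engine being the Itô formula of Theorem \ref{Ito_formulas}. Fix $(t_0,x_0) \in [0,T] \times E$ and, relying on the Markov property and the time-homogeneity of the characteristics established in section \ref{section_application}, restart the PDMP from $x_{t_0} = x_0$. I would then apply the Itô formula \eqref{Ito_PDMP} to $g = u$ between $t_0$ and $T$, which gives
\begin{equation*}
u(T,x_T) = u(t_0,x_0) + \int_{t_0}^T \cali A u(s,x_s)\, ds + \left( M^u_T - M^u_{t_0} \right),
\end{equation*}
a step that is legitimate precisely because Assumption \ref{hyp_u} grants $u$ the boundedness and differentiability demanded by Theorem \ref{Ito_formulas}.

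Next I would exploit that $u$ solves \eqref{integro_diff}: the relation $\cali A u(s,x_s)=0$ for $s \in [t_0,T)$ annihilates the Lebesgue integral, while the terminal condition $u(T,\cdot)=F$ turns the left-hand side into $F(x_T)$, leaving
\begin{equation*}
F(x_T) = u(t_0,x_0) + \left( M^u_T - M^u_{t_0} \right).
\end{equation*}
Since $M^u$ is a \emph{true} martingale — which follows from Theorem \ref{Ito_formulas} together with the boundedness of $u$ and its derivatives (Assumption \ref{hyp_u}) and the bounded intensity $\lambda^*$ of Assumption \ref{hyp_lambda1}, ensuring integrability of the compensator — taking the conditional expectation given $x_{t_0}=x_0$ kills the martingale increment and yields $u(t_0,x_0) = \E[F(x_T) \mid x_{t_0}=x_0]$. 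As $(t_0,x_0)$ is arbitrary this is the announced formula, and it simultaneously delivers uniqueness: any solution carrying the regularity of Assumption \ref{hyp_u} must coincide with $(t,x)\mapsto \E[F(x_T)\mid x_t=x]$.

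For existence I would set $v(t,x) := \E[F(x_T)\mid x_t=x]$ and verify it solves \eqref{integro_diff}. The terminal condition $v(T,\cdot)=F$ is immediate, and the Markov property makes $s\mapsto v(s,x_s)$ a martingale (it equals the Doob martingale $\E[F(x_T)\mid\cali F_s]$); comparing the Itô decomposition \eqref{Ito_PDMP} of $v$ with its martingale part $M^v$ forces the finite-variation drift $\int_0^s \cali A v(r,x_r)\,dr$ to be a continuous martingale, hence to vanish, so that $\cali A v \equiv 0$. The delicate point is to justify beforehand that $v$ is genuinely $\cali C^1$ in $(t,\nu)$ with bounded derivatives, so that $\cali A v$ is even well defined; this I would obtain by a first-jump decomposition, writing $v$ as a renewal-type integral equation over the first jump time through the interjump density \eqref{interjumpdensity} and the kernel \eqref{PDMPkernel}, then differentiating under the integral sign and invoking the smoothness of $f_\theta$, $\lambda$ and $Q$ from Assumption \ref{hyp_derivability}.

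The routine part is the verification and uniqueness direction, which reduces cleanly to the Itô formula plus the martingale property. The genuine obstacle is the existence/regularity direction: establishing that the value function $v$ inherits enough smoothness for the generator $\cali A$ to act on it. This regularity does not follow from the martingale identity alone and must be extracted from the explicit first-jump structure of the PDMP; it is exactly what Assumption \ref{hyp_u} encapsulates and the reason the statement is quoted from \cite{dav}.
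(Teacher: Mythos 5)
The paper does not prove this statement at all: it is imported verbatim from Davis \cite{dav}, and the surrounding text (Assumption \ref{hyp_u}) simply \emph{postulates} the regularity of the solution $u$ rather than deriving it. So there is no ``paper proof'' to match yours against; what you have written is the standard martingale argument, and it splits cleanly into a part that works and a part that is only named.

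The verification/uniqueness half is correct and complete as you state it: given a solution $u$ of \eqref{integro_diff} with the regularity of Assumption \ref{hyp_u}, the It\^o formula \eqref{Ito_PDMP} applied between $t_0$ and $T$ (or, equivalently, subtracting the formula at $t_0$ from the formula at $T$ and using that $M^u$ is a true martingale, which Theorem \ref{Ito_formulas} already guarantees), together with $\cali A u=0$ and $u(T,\cdot)=F$, forces $u(t_0,x_0)=\E[F(x_T)\mid x_{t_0}=x_0]$. That pins down any sufficiently regular solution, hence uniqueness in that class.

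The existence half is where the genuine content lies, and your proposal does not close it. Two points. First, the conclusion ``the finite-variation drift is a continuous martingale, hence vanishes, so $\cali A v\equiv 0$'' only gives $\cali A v(r,x_r)=0$ for a.e.\ $r$ along trajectories; to get $\cali A v(t,x)=0$ for \emph{every} $(t,x)$ you should instead run the Dynkin quotient $\lim_{s\downarrow t}(s-t)^{-1}\E[v(s,x_s)-v(t,x)\mid x_t=x]$ for each fixed $(t,x)$, which is the standard repair and is available here since the initial point is arbitrary. Second, and more seriously, the claim that $v(t,x)=\E[F(x_T)\mid x_t=x]$ is $\cali C^1$ (let alone $\cali C^2$ with bounded derivatives, as Assumption \ref{hyp_u} demands for the weak-error proof) does not follow from differentiating the first-jump renewal equation ``under the integral sign'': $v$ itself appears inside that integral, so the argument is a fixed-point/Neumann-series iteration in which one must propagate derivative bounds through the flow $\phi_\theta$, the survival factor $e^{-\int\lambda}$ and the kernel $Q$, uniformly in the number of jumps. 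For a bounded measurable $F$ this can fail in the classical sense; Davis's theorem is actually formulated for the \emph{extended} generator, where differentiability means absolute continuity along the flow, which is precisely why the present paper quotes the result and then separately \emph{assumes} classical smoothness of $u$ in Assumption \ref{hyp_u} instead of proving it. Your sketch correctly identifies this as the obstacle, but identifying it is not the same as overcoming it: as written, the existence direction of your proof rests on an unproved regularity assertion.
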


\subsection{Proof of Theorem \ref{weak_error_thm}}\label{errfaible}

We provide a proof in two steps. First, we give an appropriate representation of the weak error $\E [F (\overline x_T)] - \E [F (x_T)]$. Then, we use this representation to identify the coefficient $c_1$ in \eqref{weak_expansion}.

\bigskip \noindent \textit{Step 1: Representing $\E [F (\overline x_T)] - \E [F (x_T)]$.} Let $u$ denote the solution of \eqref{integro_diff}. From Theorem \ref{FK_PDMP} we can write $\E [F (\overline x_T)] - \E [F (x_T)]=\E [u(T,\overline x_T)] - u(0,x)$. Then, the application of the Itô formula \eqref{Ito_approx} to $u$ at time $T$ yields
\begin{equation*}
u(T,\overline x_T) = u(0,x) + \int_{0}^{T} \overline{\cali A} u (s,\overline x_s,\overline x_{\overline \eta(s)}) ds + \overline M^u_T.
\end{equation*}

\noindent Since $(\overline M^u_t)$ is a true martingale, we obtain
\begin{equation*}
\E [u(T,\overline x_T) - u(0,x)] = \E \left[ \int_{0}^{T}  \overline{\cali A} u (s,\overline x_s,\overline x_{\overline \eta(s)}) ds \right].
\end{equation*}

\noindent For $s \in [0,T]$ we have 
$\overline{\cali A} u(s,\overline x_s,\overline x_{\overline \eta(s)})
=
\ut(s,\overline x_s) + f(\overline x_{\overline \eta(s)})\ux(s,\overline x_s) + \cali S u(s,\overline x_s)$ (see Definition \ref{def_generators_approx}).
From the regularity of $\lambda$, $Q$ and $u$ (see assumptions \ref{hyp_derivability} and \ref{hyp_u}), the functions $\ut$, $\ux$ and $\cali S u$ are smooth enough to apply the Itô formula \eqref{Ito_approx} between $\overline \eta(s)$ and $s$ respectively. This yields
\begin{equation*}
\ut(s,\overline x_s) = \ut(\overline \eta(s),\overline x_{\overline \eta(s)}) + \int_{\overline \eta(s)}^{s} \overline{\cali A} (\ut)(r,\overline x_r,\overline x_{\overline \eta(r)}) dr + \overline M^{\ut}_s - \overline M^{\ut}_{\overline \eta(s)},
\end{equation*}

\begin{equation*}
\ux(s,\overline x_s) = \ux(\overline \eta(s),\overline x_{\overline \eta(s)}) + \int_{\overline \eta(s)}^{s} \overline{\cali A} (\ux)(r,\overline x_r,\overline x_{\overline \eta(r)}) dr + \overline M^{\ux}_s - \overline M^{\ux}_{\overline \eta(s)},
\end{equation*}

\begin{equation*}
\cali S u(s,\overline x_s) = \cali S u(\overline \eta(s),\overline x_{\overline \eta(s)}) + \int_{\overline \eta(s)}^{s} \overline{\cali A} (\cali S u)(r,\overline x_r,\overline x_{\overline \eta(r)}) ds + \overline M^{\cali S u}_s - \overline M^{\cali S u}_{\overline \eta(s)}.
\end{equation*}

\noindent Moreover, since $\overline \eta(r) = \overline \eta(s)$ for $r \in [\overline \eta(s),s]$, we have
\begin{align*}
f(\overline x_{\overline \eta(s)})\ux(s,\overline x_s) 
&= 
f(\overline x_{\overline \eta(s)})\ux(\overline \eta(s),\overline x_{\overline \eta(s)}) 
\\&+ 
\int_{\overline \eta(s)}^{s} f(\overline x_{\overline \eta(r)})\overline{\cali A} (\ux)(r,\overline x_r,\overline x_{\overline \eta(r)}) dr + f(\overline x_{\overline \eta(s)})(\overline M^{\ux}_s - \overline M^{\ux}_{\overline \eta(s)}),
\end{align*}

\noindent so that
\begin{align*}
\overline{\cali A} u(s,\overline x_s,\overline x_{\overline \eta(s)})
&=
\overline{\cali A}u(\overline \eta(s),\overline x_{\overline \eta(s)},\overline x_{\overline \eta(s)}) + \int_{\overline \eta(s)}^s \Upsilon(r,\overline x_r,\overline x_{\overline \eta(r)})dr
\\&+
\overline M^{\ut}_s - \overline M^{\ut}_{\overline \eta(s)} + f(\overline x_{\overline \eta(s)})(\overline M^{\ux}_s - \overline M^{\ux}_{\overline \eta(s)}) + \overline M^{\cali S u}_s - \overline M^{\cali S u}_{\overline \eta(s)},
\end{align*}

\noindent where, 
\begin{equation}\label{Upsilon}
\Upsilon(t,x,y) := \left( \overline{\cali A} (\ut) + f(y) \overline{\cali A} (\ux) + \overline{\cali A} (\cali S u) \right)(t,x,y).
\end{equation}

\noindent Since $\overline{\cali A}u(t,x,x) = \cali Au(t,x)$, the first term in the above equality is 0 by Theorem \ref{FK_PDMP}. By using Fubini's theorem and the fact that $(\overline M^{\ut}_t)$ and $(\overline M^{\cali S u}_t)$ are true martingales, we obtain
\begin{equation*}
\E \left[ \int_0^T \overline M^{\ut}_s - \overline M^{\ut}_{\overline \eta(s)} ds \right]
=
\E \left[ \int_0^T \overline M^{\cali S u}_s - \overline M^{\cali S u}_{\overline \eta(s)} ds \right]
=
0.
\end{equation*}

\noindent Moreover, since $(\overline M^{\ux}_t)$ is a $\cali F^{\overline p}_{t}$-martingale, we have
\begin{align*}
\E \left[ \int_0^T f(\overline x_{\overline \eta(s)})(\overline M^{\ux}_s - \overline M^{\ux}_{\overline \eta(s)}) ds \right]
=
\int_0^T
\E \left[ f(\overline x_{\overline \eta(s)}) \E[ \overline M^{\ux}_s - \overline M^{\ux}_{\overline \eta(s)} | \cali F^{\overline p}_{\overline \eta(s)} ] \right]
ds
=
0.
\end{align*}
 
\noindent Collecting the previous results, we obtain 
$
\E [F(\overline x_T)] - \E [F(x_T)]
=
\E \left[ \int_0^T \int_{\overline \eta(s)}^s \Upsilon(r,\overline x_{r},\overline x_{\overline \eta(r)}) dr ds \right].
$
We can compute an explicit form of $\Upsilon$ in term of $u$, $f$, $\lambda$, $Q$ and their derivatives. Indeed, $\Upsilon$ is given by \eqref{Upsilon}, and we have
\begin{equation*}
\begin{split}
&\overline{\cali A}(\ut)(t,x,y) 
=
\utt(t,x) + f(y)\utx(t,x) + \cali S (\ut)(t,x),
\\\vspace{0.2cm}&
\left( f\overline{\cali A}(\ux) \right)(t,x,y)
=
f(y) \left( \utx(t,x) + f(y)\uxx(t,x) + \cali S(\ux)(t,x) \right),
\\\vspace{0.2cm}&
\overline{\cali A}(\cali S u)(t,x,y)
=
\partial_t (\cali S u)(t,x) + f(y) \partial_\nu (\cali S u)(t,x) + \cali S(\cali S u)(t,x).  
\end{split}
\end{equation*}

\noindent The application of the Taylor formula to the functions $\utt$, $\utx$, $\uxx$, $\cali S (\ut)$, $\cali S(\ux)$, $\partial_t (\cali S u)$, $\partial_\nu (\cali S u)$ and $\cali S(\cali S u)$ at the order 0 around $(\overline \eta(r),\overline x_{\overline \eta(r)})$ yields
$
\Upsilon(r,\overline x_r,\overline x_{\overline \eta(r)})
=
\Upsilon(\overline \eta(r),\overline x_{\overline \eta(r)},\overline x_{\overline \eta(r)}) + O(h).
$
Setting $\Psi(t,x) = \Upsilon(t,x,x) $ and recalling that for $r \in [\overline \eta(s),s]$, $\overline \eta(r) = \overline \eta(s)$ and that $|s - \overline \eta(s)| \leq h$, we obtain
\begin{equation*}
\E [F(\overline x_T)] - \E [F(x_T)]
=
\E \left[ \int_0^T (s - \overline \eta(s) )\Psi(\overline \eta(s),\overline x_{\overline \eta(s)}) ds \right] + O(h^2).
\end{equation*}

\noindent Consider the expectation in the right-hand side of the above equality. We decompose the integral into a (finite) sum of integrals on the intervals $[\overline T_n + kh, (\overline T_n + (k+1)h)\smin \overline T_{n+1} ]$ where $\Psi$ is constant. Without loss of generality, we are led to consider integrals of the form $\int_{kh}^t (s-kh)C ds $ for some $k \geq 0$, $t \in [kh,(k+1)h]$ and $C$ a bounded constant. We have $\int_{kh}^t (s-kh)C ds = \frac{t - kh}{2} \int_{kh}^tC ds $ moreover adding and subtracting $h$ in the numerator of $(t - kh)/2$ yields
\begin{equation*}
\int_{kh}^t (s-kh)C ds
=
\frac{h}{2} \int_{kh}^tC ds + \frac{t - (k+1)h}{2} \int_{kh}^tC ds.
\end{equation*}

\noindent Since $C$ is bounded we deduce that $\int_{kh}^t (s-kh)C ds = \frac{h}{2} \int_{kh}^tC ds + O(h^2)$. Since $\Psi$ is assumed bounded and $\E[\overline N_T] < +\infty$, the above arguments yields the following representation
\begin{equation}\label{representation_weak}
\E [F(\overline x_T)] - \E [F(x_T)]
=
\frac{h}{2}\E \left[ \int_0^T \Psi(\overline \eta(s),\overline x_{\overline \eta(s)}) ds \right] + O(h^2).
\end{equation} 

\bigskip \noindent \textit{Step 2: From the representation \eqref{representation_weak} to the expansion at the order one.} In this step, we show that $\E \left[ \int_0^T \Psi(\overline \eta(s),\overline x_{\overline \eta(s)}) ds \right]=\E \left[ \int_0^T \Psi(s,x_{s}) ds \right]+O(h)$.
First, we introduce the random variables $\overline \Gamma$ and $\Gamma$ defined by $\overline \Gamma :=\int_{0}^{ T}  \Psi (\overline \eta(s),\overline x_{\overline \eta(s)}) ds $ and $\Gamma :=\int_{0}^{ T}  \Psi (\overline \eta(s),x_{\overline \eta(s)}) ds $ and write
\begin{equation*}
\E [ |\overline \Gamma - \Gamma| ]
=
\E \left[\ind{\min(T_{\overline \tau^\dagger},\overline T_{\overline \tau^\dagger}) \leq T} |\overline \Gamma - \Gamma|  \right] 
+ 
\E \left[\ind{\min(T_{\overline \tau^\dagger},\overline T_{\overline \tau^\dagger}) > T} |\overline \Gamma - \Gamma|  \right],
\end{equation*} 

\noindent where $\overline \tau^\dagger$ is defined in Definition \ref{tau*}. Since $\Psi$ is bounded and $\bb P(\min(T_{\overline \tau^\dagger},\overline T_{\overline \tau^\dagger}) \leq T ) = O(h) $ (see the proof of Theorem \ref{thm_strong_cv}), we have 
$
\E \left[ |\overline \Gamma - \Gamma| \ind{\min(T_{\overline \tau^\dagger},\overline T_{\overline \tau^\dagger}) \leq T} \right]
=O(h).
$
Now, recall from \eqref{min_tau_dag} that, on the event $\{\min(T_{\overline \tau^\dagger},\overline T_{\overline \tau^\dagger}) > T\}$, we have $T_k = \overline T_k$ and $\theta_k = \overline \theta_k$ for all $k \geq 1$ such that $T_k \in [0,T]$. Thus, for all $n \leq \overline N_T$ and for all $s \in [\overline T_n,\overline T_{n+1}[$ we have $\overline x_{\overline \eta(s)} = (\overline \theta_n, \overline \phi_{\overline \theta_n}(\overline \eta(s)-\overline T_n,\overline \nu_n))$ and $ x_{\overline \eta(s)} = (\overline \theta_n,  \phi_{\overline \theta_n}(\overline \eta(s)-\overline T_n, \nu_n))$. Consequently, on the event $\{\min(T_{\overline \tau^\dagger},\overline T_{\overline \tau^\dagger}) > T \}$ we have
\begin{align*}
|\overline \Gamma - \Gamma|
\leq
\sum_{n=0}^{\overline N_T} \int_{\overline T_n}^{\overline T_{n+1} \smin T} | \Psi (\overline \eta(s),\overline \theta_n,\overline \phi_{\overline \theta_n}(\overline \eta(s)-\overline T_n,\overline \nu_n)) -  \Psi (\overline \eta(s),\overline \theta_n, \phi_{\overline \theta_n}(\overline \eta(s)-\overline T_n, \nu_n)) | ds.
\end{align*}

\noindent From the regularity assumptions \ref{hyp_derivability} and \ref{hyp_u}, the function $\nu \mapsto \Psi(t,\theta,\nu)$ is uniformly Lipschitz in $(t,\theta)$ with constant $L_\Psi$ as sum and product of bounded Lipschitz functions. Thus, from this Lipschitz property and the application of Lemma \ref{strong_error_lemme2}, we get
\begin{equation*}
| \Psi (\overline \eta(s),\overline \theta_n,\overline \phi_{\overline \theta_n}(\overline \eta(s)-\overline T_n,\overline \nu_n)) -  \Psi (\overline \eta(s),\overline \theta_n, \phi_{\overline \theta_n}(\overline \eta(s)-\overline T_n, \nu_n)) |
\leq
L_\Psi C e^{LT} (n+1) h.
\end{equation*}

\noindent From the above inequality, we find that $\E \left[ \ind{\min(T_{\overline \tau^\dagger},\overline T_{\overline \tau^\dagger}) > T} |\overline \Gamma - \Gamma| \right] \leq L_\Psi C e^{LT} T h \E [\overline N_T (\overline N_T+1)]$. Since $\overline N_T \leq N^*_T$ and $\E [ N^*_T (N^*_T+1)] < +\infty$ we conclude that $\E \left[ \ind{\min(T_{\overline \tau^\dagger},\overline T_{\overline \tau^\dagger}) > T} |\overline \Gamma - \Gamma| \right] = O(h)$. We have shown that $\E \left[ \int_0^T \Psi(\overline \eta(s),\overline x_{\overline \eta(s)}) ds \right]=\E \left[ \int_0^T \Psi(\overline \eta(s), x_{\overline \eta(s)}) ds \right]+O(h)$.
Secondly, from the regularity assumptions \ref{hyp_derivability} and \ref{hyp_u}, the function $(t,\nu) \mapsto \Psi(t,\theta,\nu)$ is uniformly Lipschitz in $\theta$. Moreover, for all $s \in [0,T]$ there exits $k \geq 0$ such that both $s$ and $\overline \eta(s)$ belong to the same interval $[\overline T_k,\overline T_{k+1}[$ so that $x_s = (\theta_k,\phi_{\theta_k}(s-\overline T_k,\nu_k))$ and $x_{\overline \eta(s)} = (\theta_k,\phi_{\theta_k}(\overline \eta(s)-\overline T_k,\nu_k))$. Thus, from the Lipschitz continuity of $\Psi$, from the fact that $|s-\overline \eta(s)| \leq h$ and since $f_\theta$ is uniformly bounded in $\theta$ we have $|\Psi(s,x_s) - \Psi (\overline \eta(s),x_{\overline \eta(s)})| \leq C h$ where $C$ is a constant independent of $h$. Then, we obtain
$
\sup_{s \in [0,T]} | \E [\Psi(s,x_s)] - \E [\Psi (\overline \eta(s),x_{\overline \eta(s)}) ] |
\leq C h
$
from which we deduce that
$
\left| \E \left[  \int_{0}^T  \Psi (\overline \eta(s),x_{\overline \eta(s)}) ds \right]
-
\E \left[  \int_{0}^T  \Psi (s,x_s) ds \right] \right|
\leq CT h.
$
Finally, the weak error expansion reads
\begin{equation*}
\E [F (\overline x_T)] - \E [F (x_T)]
=
\frac{h}{2} \E \left[  \int_{0}^T  \Psi (s,x_s) ds \right] +O(h^2).
\end{equation*}

\qed

\section{Numerical experiment}\label{section_numerical_res}

In this section, we use the theoretical results above to apply the MLMC method to the PDMP 2-dimensional Morris-Lecar (shortened PDMP 2d-ML).

\subsection{The PDMP 2-dimensional Morris-Lecar}\label{ML_model}

The deterministic Morris-Lecar model has been introduced in 1981 by Catherine Morris and Harold Lecar in \cite{lecar} to explain the dynamics of the barnacle muscle fiber. This model belongs to the family of conductance-based models (just as the Hodgkin-Huxley model \cite{hodgkin}) and takes the following form
\begin{equation}\label{deter_ML}
\left\{
\begin{array}{ll}
\frac{dv}{dt} = \frac{1}{C} \Big{(} I - g_{\text{Leak}} (v - V_{\text{Leak}}) - g_{\text{Ca}} M_{\infty}(v) (v - V_{\text{Ca}}) - g_{\text{K}} n (v - V_{\text{K}}) \Big{)},\\
\frac{dn}{dt} = (1 - n) \alpha_\text{K}(v) - n \beta_\text{K}(v),
\end{array}
\right.
\end{equation}

\noindent where $M_{\infty}(v) = (1 + \tanh[ (v-V_1 )/ V_2 ] )/2$,  $\alpha_\text{K}(v) = \lambda_\text{K}(v) N_{\infty}(v)$, $\beta_\text{K}(v) = \lambda_\text{K}(v) (1- N_{\infty}(v))$, $N_{\infty}(v) = (1 + \tanh[ (v-V_3 )/ V_4 ] )/2$, $\lambda_\text{K}(v) = \overline \lambda_\text{K} \cosh((v-V_3 )/ 2V_4 )$.

\bigskip \noindent In this section we consider the PDMP version of \eqref{deter_ML} that we denote by $(x_t, t \in [0,T])$, $T > 0$, whose characteristics $(f,\lambda,Q)$ are given by
\begin{itemize}
\item $f(\theta,\nu) = \frac{1}{C} \Big{(} I - g_{\text{Leak}} (\nu - V_{\text{Leak}}) - g_{\text{Ca}} M_{\infty}(\nu) (\nu - V_{\text{Ca}}) - g_{\text{K}} \frac{\theta}{N_\text{K}} (\nu - V_{\text{K}}) \Big{)}$,

\item $\lambda(\theta,\nu) = (N_\text{K} - \theta) \alpha_\text{K}(\nu) + \theta \beta_\text{K}(\nu)$,

\item $Q\Big{(} (\theta,\nu), \{\theta +1\} \Big{)} = \frac{(N_\text{K} - \theta) \alpha_\text{K}(\nu)}{\lambda(\theta,\nu)}$, \hspace{0.3cm} $Q\Big{(} (\theta,\nu), \{\theta - 1\} \Big{)} = \frac{\theta \beta_\text{K}(\nu)}{\lambda(\theta,\nu)}$.
\end{itemize}

\noindent The state space of the model is $E = \{0,\ldots,N_\text{K}\} \times \bb R$ where $N_\text{K} \geq 1$ stands for the number of potassium gates.
The values of the parameters used in the simulations are $V_1 = -1.2$ , $V_2 = 18$, $V_3 = 2$, $V_4 = 30$, $\overline \lambda_\text{K} = 0.04$, $C = 20$, $g_{\text{Leak}} = 2$, $V_{\text{Leak}} = -60$, $g_{\text{Ca}} = 4.4$, $V_{\text{Ca}} = 120$, $g_{\text{K}} = 8$, $V_{\text{K}} = -84$, $I = 60$, $N_{\text{K}} = 100$.

\begin{figure}[h]
  \begin{subfigure}{.5\linewidth}
    \resizebox{\columnwidth}{!}{\input{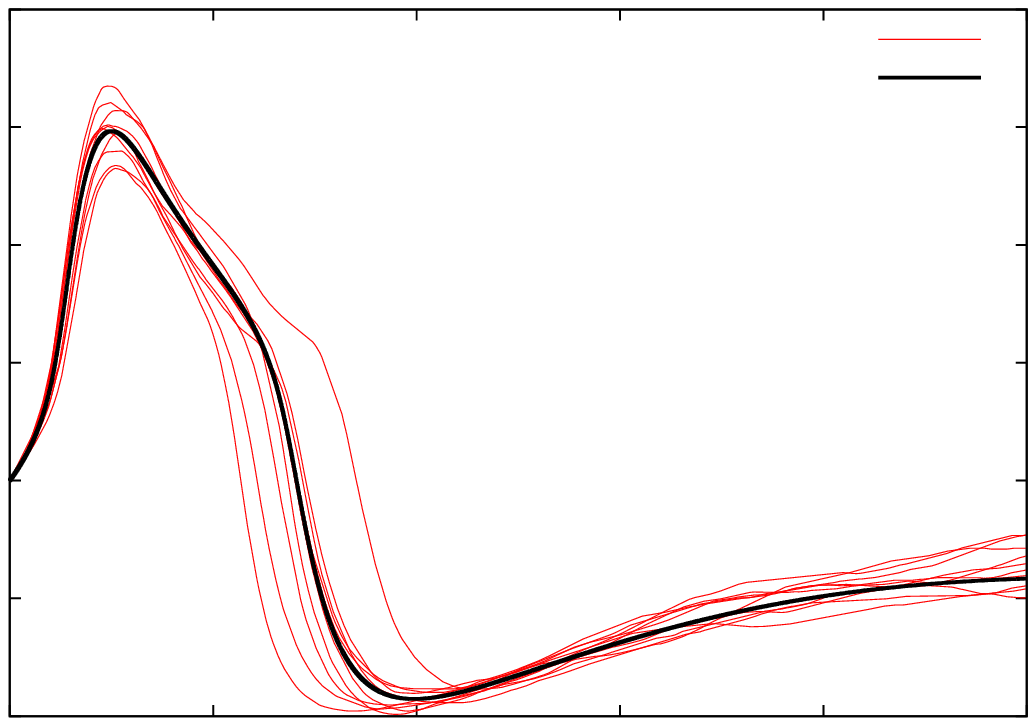}}
    \caption{Membrane potential as a function of time. Red curves: stochastic potential. Black curve: deterministic potential.}
  \end{subfigure} 
  \begin{subfigure}{.5\linewidth}
    \resizebox{\columnwidth}{!}{\input{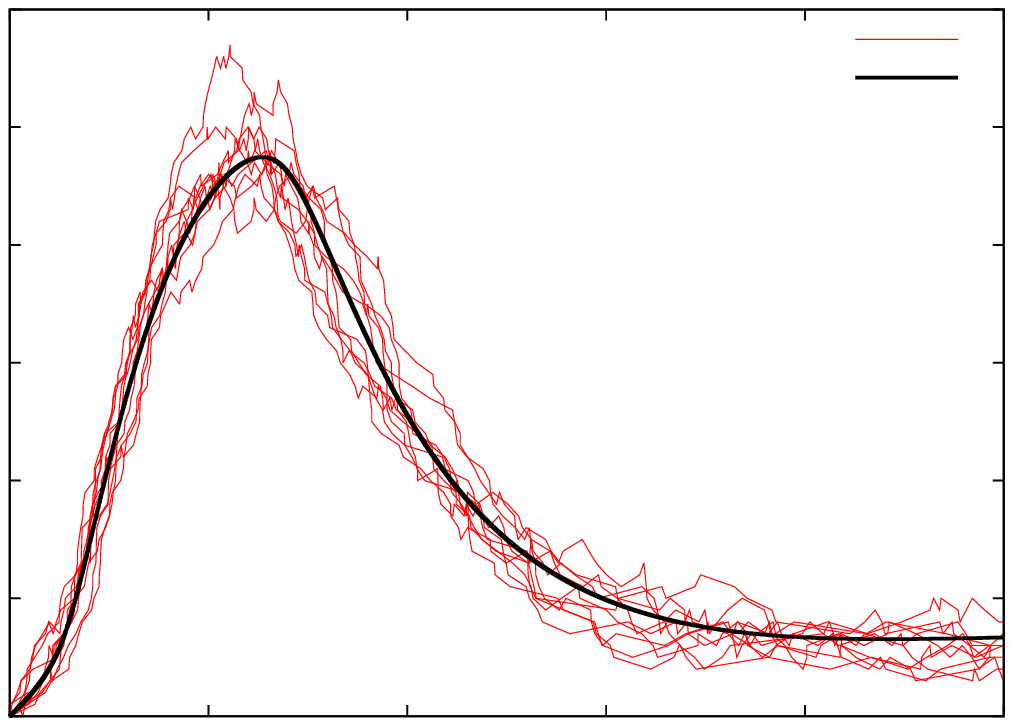}}
    \caption{Proportion of opened gates as a function of time. Red curves: stochastic gates ($\theta/N_{\text{K}}$). Black curve: deterministic gates ($n$).}
  \end{subfigure}
    \begin{subfigure}{.5\linewidth}
    \resizebox{\columnwidth}{!}{\input{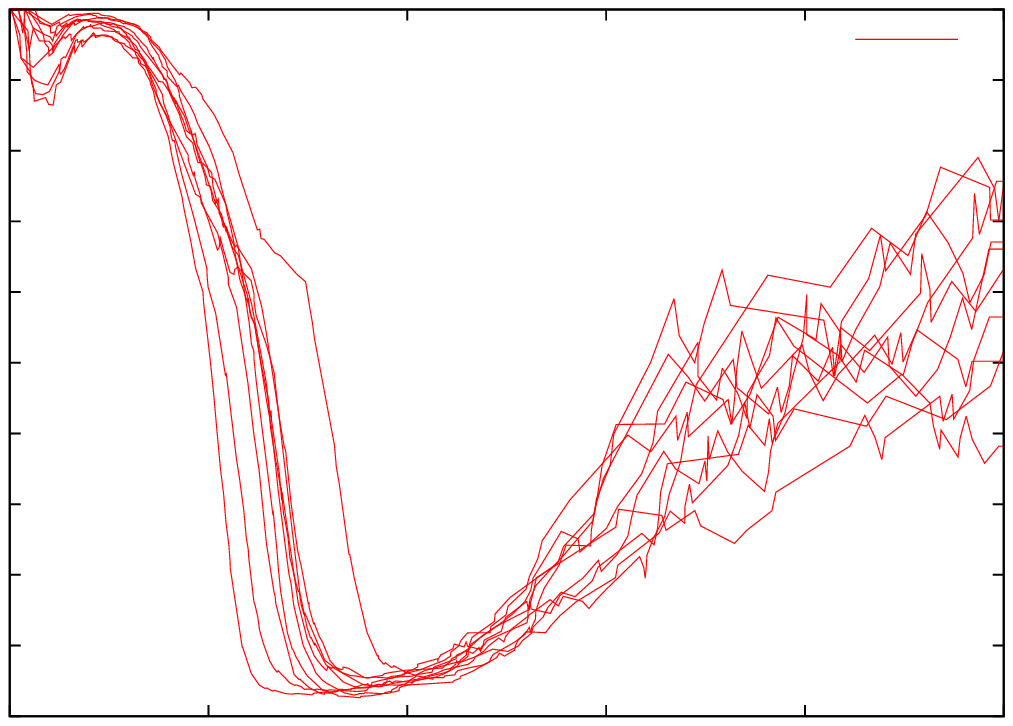}}
    \caption{Probability of opening a gate ($Q(x_t, \{\theta_t+1\})$) as a function of time.}
  \end{subfigure} 
  \begin{subfigure}{.5\linewidth}
    \resizebox{\columnwidth}{!}{\input{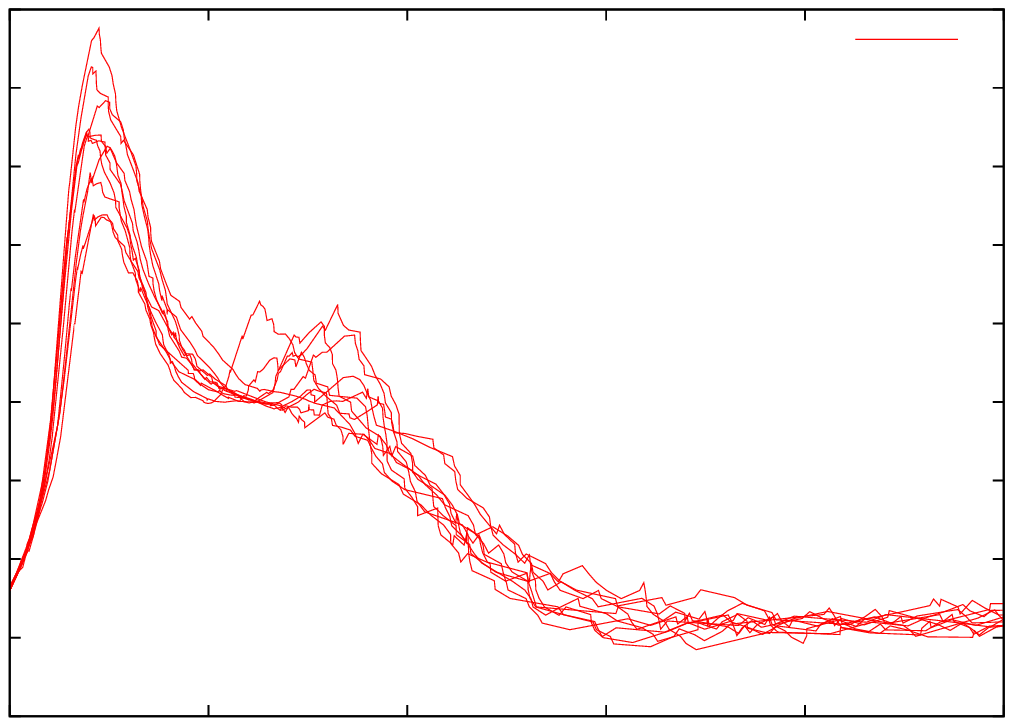}}
            \caption{Jump rate ($\lambda(x_t)$) as a function of $t$.}
  \end{subfigure}
\caption{10 trajectories of the characteristics of the PDMP 2d-ML on $[0,100]$.}
\label{fig1}
\end{figure}

\subsection{Classical and Multilevel Monte Carlo estimators}\label{MC_MLMC_section}

In this section we introduce the classical and multilevel Monte Carlo estimators in order to estimate the quantity $\E \left[ F(x_T) \right]$  where $(x_t, t \in [0,T])$ is the PDMP 2d-ML and $F(\theta,\nu) = \nu$ for $(\theta,\nu) \in E$ so that $F(x_T)$ gives the value of the membrane potential at time $T$. Note that other possible choices are $F(\theta,\nu) = \nu^n$ or $F(\theta,\nu) = \theta^n$ for some $n \geq 2$. In those cases, the quantity $\E \left[ F(x_T) \right]$ gives the moments of the membrane potential or the number of open gates at time $T$ so that we can compute statistics on these biological variables. 

\noindent Let $X := F(x_T)$. In the sequel it will be convenient to emphasize the dependence of the Euler scheme $(\overline x_t)$ on a time step $h$. We introduce a family of random variables $(X_h, h > 0)$ defined by $X_h := F(\overline x_T)$ where for a given $h>0$ the corresponding PDP $(\overline x_t)$ is constructed as in section \ref{section_application} with time step $h$. In particular, the processes $(\overline x_t)$ for $h >0$ are correlated through the same randomness $(U_k)$, $(V_k)$ and $(N^*_t)$. We build a classical Monte Carlo estimator of $\E[X]$ based on the family $(X_h, h > 0)$ as follows 
\begin{equation}\label{crude_MC_estimator}
Y^{\text{MC}} = \frac{1}{N} \sum_{k=1}^N X_h^{k},
\end{equation}

\noindent where $(X_h^{k},k \geq 1)$ is an i.i.d sequence of random variables distributed like $X_h$. The parameters $h>0$ and $N \in \bb N$ have to be determined. We build a multilevel Monte Carlo estimator based on the family $(X_h, h > 0)$ as follows
\begin{equation}\label{MLMC_estimator}
Y^{\text{MLMC}} = \frac{1}{N_1} \sum_{k=1}^{N_1} X_{h^*}^{k} + \sum_{l=2}^L \frac{1}{N_l} \sum_{k=1}^{N_l} (X_{h_l}^{k} - X_{h_{l-1}}^{k}),
\end{equation}
 
\noindent where $\left( (X_{h_l}^{k}, X_{h_{l-1}}^{k}), k \geq 1 \right)$ for $l=2,\ldots,L$ are independent sequences of independent copies of the couple $(X_{h_l}, X_{h_{l-1}})$ and independent of the i.i.d sequence $(X_{h^*}^{k}, k \geq 1)$. The parameter $h^*$ is a free parameter that we fix in section \ref{section_numerical_results}. The parameters $L \geq 2$, $M \geq 2$, $N \geq 1$ and $q = (q_1, \ldots, q_L) \in ]0,1[^L$ with $\sum_{l=1}^L q_l = 1$ have to be determined, then we set $N_l := \lceil N q_l\rceil$, $h_l := h^* M^{-(l-1)}$.

\smallskip \noindent We also set $\tilde X := F(\tilde x_T) \tilde R_T $ where $\tilde R_T$ is defined as in Proposition \ref{DECOMP2} with an intensity $\tilde \lambda$ and a kernel $\tilde Q$ that will be specified in section \ref{section_numerical_results} and let $(\tilde X_h, h > 0)$ be such that $\tilde X_h := F(\underline{\tilde x}_T) \underline{\tilde R}_T$ for all $h >0$ .
By Proposition \ref{DECOMP2}, we have $\E[X] = \E[\tilde X]$ and $\E[X_h] = \E[\tilde X_h]$ for $h >0$. Consequently, we build likewise a multilevel estimator $\tilde Y^{\text{MLMC}}$ based on the family $(\tilde X_h, h > 0)$. 

\smallskip \noindent The complexity of the classical Monte Carlo estimator $Y^{\text{MC}}$ depends on the parameters $(h,N)$ and the one of the multilevel estimators $Y^{\text{MLMC}}$ and $\tilde Y^{\text{MLMC}}$ depends on $(L,q,N)$. In order to compare those estimators we proceed as in \cite{LP} (see also \cite{pages2018}), that is to say, for each estimator we determine the parameters which minimize the global complexity (or cost) subject to the constraint that the resulting L$^2$-error must be lower than a prescribed $\epsilon >0$. 

\noindent As in \cite{LP}, we call $V_1$, $c_1$, $\alpha$, $\beta$ and $\text{Var}(X)$ the structural parameters associated to the family $(X_h,h > 0)$ and $X$. We know theoretically from Theorem \ref{thm_strong_cv} (strong estimate) and Theorem \ref{weak_error_thm} (weak expansion) that $(\alpha,\beta) = (1,1)$ whereas $V_1$, $c_1$ and $\text{Var}(X)$ are not explicit (we explain how we estimate them in section \ref{section_methodo}). Moreover, the structural parameters $\tilde V_1$, $\tilde c_1$, $\tilde \alpha$, $\tilde \beta$ and $\text{Var}(\tilde X)$ associated to $(\tilde X_h, h > 0)$ and $\tilde X$ are such that $\tilde \alpha = \alpha$, $\tilde c_1 = c_1$ (see \eqref{weak_tilde}), $\tilde \beta = 2$ (see Theorem \ref{thm_strong_cv_tilde}) and $\tilde V_1$, $\text{Var}(\tilde X)$ are not explicit.

\noindent The classical and the multilevel estimators defined above are linear and of Monte Carlo type in the sense described in \cite{LP}. The optimal parameters of those estimators are then expressed in term of the corresponding structural parameters as follows (see \cite{LP} or \cite{pages2018}).
For a user prescribed $\epsilon > 0$, the classical Monte Carlo parameters $h$ and $N$ are
\begin{equation}\label{MC_parameters}
h(\epsilon) = (1 + 2\alpha)^{\frac{-1}{2\alpha}} \left( \frac{\epsilon}{|c_1|} \right)^{\frac{1}{\alpha}}, 
\hspace{0,5cm}
N(\epsilon) = \left( 1 + \frac{1}{2\alpha} \right) \frac{\text{Var}(X) \left( 1 + \rho h^{\beta/2}(\epsilon) \right)^2}{\epsilon^2},
\end{equation}

\noindent where $\rho = \sqrt{V_1 / \text{Var}(X)}$. The parameters of the estimator $Y^{\text{MLMC}}$ are given in Table \ref{MLMC_parameters}  where $n_l := M^{l-1}$ for $l = 1, \ldots, L$ with the convention $n_0 = n_0^{-1} = 0$. The parameters of $\tilde Y^{\text{MLMC}}$ are given in a similar way using $\tilde V_1$, $\tilde \beta$ and $\text{Var}(\tilde X)$. Finally, the parameter $M(\epsilon)$ is determined as in \cite{LP} section 5.1.

{\renewcommand{\arraystretch}{3}
{\setlength{\tabcolsep}{0.5cm}
\begin{table}
\begin{tabular}{c|c} 
$L$ & $\left\lceil 1 + \frac{\log(|c_1|^{\frac{1}{\alpha}}  h^*)}{\log(M)} + \frac{\log(A/\epsilon)}{\alpha \log(M)} \right\rceil$, \hspace{0.2cm} $A = \sqrt{1+2\alpha}$ \\
\hline
$q$ & 
\begin{tabular}{c} 
$q_1 = \mu^* (1 + \rho (h^*)^{\frac{\beta}{2}})$ \\ 
$q_j = \mu^* \rho (h^*)^{\frac{\beta}{2}} \left( \frac{n_{j-1}^{\frac{-\beta}{2}}+n_j^{\frac{-\beta}{2}}}{\sqrt{n_{j-1}+n_j}} \right), j=2,\ldots,L; \mu^* = 1/\sum_{1 \leq j \leq L} q_j$
\end{tabular} \\
\hline
$N$ & $\left(1+\frac{1}{2 \alpha} \right) \frac{ \text{Var}(X) \left( 1 + \rho (h^*)^{\frac{\beta}{2}} \sum_{j=1}^L \left( n_{j-1}^{\frac{-\beta}{2}}+n_j^{\frac{-\beta}{2}} \right) \sqrt{n_{j-1}+n_j} \right)^2 }{\epsilon^2 \sum_{j=1}^L q_j (n_{j-1}+n_j)}$ \\
\end{tabular}  
\caption{Optimal parameters for the MLMC estimator \eqref{MLMC_estimator}.}
\label{MLMC_parameters} 
\end{table}}}

\subsection{Methodology}\label{section_methodo}

We compare the classical and the multilevel Monte Carlo estimators in term of \textit{precision}, \textit{CPU-time} and \textit{complexity}. 
The \textit{precision} of an estimator $Y$ is defined by the L$^2$-error $\parallel Y - \E[X] \parallel_2 = \sqrt{ (\E[Y] - \E[X])^2 + \text{Var}(Y) }$ also known as the Root Mean Square Error (RMSE).
The \textit{CPU-time} represents the time needed to compute one realisation of an estimator. 
The \textit{complexity} is defined as the number of time steps involved in the simulation of an estimator.
Let $Y$ denote the estimator \eqref{crude_MC_estimator} or \eqref{MLMC_estimator}. We estimate the bias of $Y$ by
\begin{equation*}
\widehat b_R = \frac{1}{R} \sum_{k=1}^R Y^{k} - \E [X],
\end{equation*} 

\noindent where $Y^{1},\ldots,Y^{R}$ are $R$ independent replications of the estimator. We estimate the variance of $Y$ by
\begin{equation*}
\widehat v_R = \frac{1}{R} \sum_{k=1}^R v^{k},
\end{equation*}

\noindent where $v^{1},\ldots,v^{R}$ are $R$ independent replications of $v$ the empirical variance of $Y$. In the case where $Y$ is the crude Monte Carlo estimator we set
\begin{equation*}
v = \frac{1}{N(N-1)} \sum_{k=1}^N (X_h^{k} - m_N)^2,
\hspace{0.5cm}
m_N = \frac{1}{N} \sum_{k=1}^N X_h^{k}.
\end{equation*}

\noindent If $Y$ is the MLMC estimator, we set
\begin{equation*}
v = \frac{1}{N_1 (N_1 -1)} \sum_{k=1}^{N_1} (X_h^{k} - m^{(1)}_{N_1})^2 + \sum_{l = 2}^L \frac{1}{N_l (N_l-1)} \sum_{k=1}^{N_l} (X_{h_l}^{k}-X_{h_{l-1}}^{k} - m^{(l)}_{N_l})^2,
\end{equation*}

\noindent where $m^{(1)}_{N_1} = \frac{1}{N_1} \sum_{k=1}^{N_1} X_{h}^{k}$ and for $l \geq 2$, $m^{(l)}_{N_l} = \frac{1}{N_l} \sum_{k=1}^{N_l} X_{h_l}^{k}-X_{h_{l-1}}^{k}$. Then, we define the empirical RMSE $\widehat \epsilon_R$ by
\begin{equation}\label{empirical_RMSE}
\widehat \epsilon_R = \sqrt{ \widehat b_R^2 + \widehat v_R  }.
\end{equation} 

\noindent The numerical computation of \eqref{empirical_RMSE} for both estimators \eqref{crude_MC_estimator} and \eqref{MLMC_estimator} requires the computation of the optimal parameters given by \eqref{MC_parameters} and in table \ref{MLMC_parameters} of section \ref{MC_MLMC_section} which are expressed in term of the structural parameters $c_1$, $V_1$ and $\text{Var}(X)$. Moreover the computation of the bias requires the value $\E[X]$. Since there is no closed formula for the mean and variance of $X$ we estimate them using a crude Monte Carlo estimator with $h=10^{-5}$ and $N=10^6$. The constants $c_1$ and $V_1$ are not explicit, we use the same estimator of $V_1$ as in \cite{LP} section $5.1$, that is 
\begin{equation}\label{V1_estimator}
\widehat{V_1} = (1 + M^{-\beta/2})^{-2} h^{- \beta} \E \left[ |X_h - X_{h/M}|^2 \right],
\end{equation} 

\noindent and we use the following estimator of $c_1$
\begin{equation}\label{c1_estimator}
\widehat{c_1} = \left(1- M^{- \alpha} \right)^{-1} h^{-\alpha} \E \left[ X_{h/M} - X_h\right].
\end{equation}

\noindent The estimator of $c_1$ is obtained writing the weak error expansion for the two time steps $h$ and $h/M$, summing and neglecting the $O(h^2)$ term. In \eqref{V1_estimator} we use $(h,M)=(0.1,4)$ and in \eqref{c1_estimator}, we use $(h,M) = (1,4)$ and the expectations are estimated using a classical Monte Carlo of size $N=10^4$ on $(X_{h/M},X_h)$. We emphasize that we interested in the order of $c_1$ and $V_1$ so that we do not need a precise estimation here.

\subsection{Numerical results}\label{section_numerical_results}

In this section we first illustrate the results of Theorems \ref{thm_strong_cv} and \ref{thm_strong_cv_tilde} on the Morris-Lecar PDMP, then we compare the MC and MLMC estimators. The simulations were carried out on a computer with a processor Intel Core i5-4300U CPU @ 1.90GHz $\times$ 4. The code is written in C++ language. We implement the estimator $\tilde Y^{\text{MLMC}}$ (see section \ref{MC_MLMC_section}) for the following choices of the parameters $(\tilde \lambda, \tilde Q)$.

\smallskip \noindent \textbf{Case 1:} $\tilde \lambda(\theta) = 1$ and $\tilde Q\Big{(} \theta, \{\theta +1\} \Big{)} = \frac{N_\text{K} - \theta}{N_\text{K}}$, $\tilde Q\Big{(} \theta, \{\theta - 1\} \Big{)} = \frac{\theta}{N_\text{K}}$.

\smallskip \noindent \textbf{Case 2:} $\tilde \lambda(x,t) = \lambda(\theta, v(t))$ and $\tilde Q((x,t),dy) = Q((\theta,v(t)),dy)$ where $v$ denotes the first component of the solution of \eqref{deter_ML}.

\smallskip \noindent Cases 1 and 2 correspond to the application of Proposition \ref{DECOMP2}. Based on Corollary \ref{cor1_propr2} we also consider the following case.

\smallskip \noindent \textbf{Case 3:} Consider the quantity $\E [F(x_T) - F(\tilde x_T)]$ where $(x_t)$ and $(\tilde x_t)$ are PDPs with characteristics $(\Phi,\lambda,Q)$ and $(\tilde \Phi,\lambda,Q)$ respectively. By Corollary \ref{cor1_propr2}, we have $\E[F(\tilde x_T)] = \E[F(y_T) \tilde R_T]$ where $(y_t)$ is a PDP whose discrete component jumps in the same states and at the times as the discrete component of $(x_t)$ do and $(\tilde R_t)$ is the corresponding corrective process. Thus, we consider the quantity  $\E [F(x_T) - F(y_T) \tilde R_T]$ instead of $\E [F(x_T) - F(\tilde x_T)]$. 

\smallskip \noindent The case 3 implies to use the following MLMC estimator which is slightly different from \eqref{MLMC_estimator}.
\begin{equation*}
\tilde Y^{\text{MLMC}} = \frac{1}{N_1} \sum_{k=1}^{N_1} X_{h^*}^{k} + \sum_{l=2}^L \frac{1}{N_l} \sum_{k=1}^{N_l} X_{h_l}^{k} - \tilde X_{h_{l-1}}^{k},
\end{equation*}

\noindent where $\left( (X_{h_l}^{k}, \tilde X_{h_{l-1}}^{k}), k \geq 1 \right)$ for $l=2,\ldots,L$ are independent sequences of independent copies of the couple $(X_{h_l}, \tilde X_{h_{l-1}}) = (F(\overline x_T),F(\overline y_T)\tilde R_T)$ where $(\overline y_t)$ is a PDP whose discrete component jumps in the same states and at the same times as the Euler scheme $(\overline x_t)$ with time step $h_l$ do, whose deterministic motions are given by the approximate flows with time step $h_{l-1}$ and $(\tilde R_t)$ is the corresponding corrective process (see Corollary \ref{cor1_propr2}).

\smallskip \noindent The figure \ref{variance_graph} confirms numerically that $\E [ | X_{h_l} - X_{h_{l-1}} |^2 ] = O(h_l)$ and that $\E[ |\tilde X_{h_l} - \tilde X_{h_{l-1}}|^2 ] = O(h_l^2)$ for the cases 1,2 and 3 (see Theorems \ref{thm_strong_cv} and \ref{thm_strong_cv_tilde} respectively). Indeed, for $T=10$ (see figure \ref{variance_decay_10}), we observe that the curve corresponding to the decay of $\E[ | X_{h_l} -  X_{h_{l-1}}|^2 ]$ as $l$ increases is approximately parallel to a line of slope -1 and that the curves corresponding to the decay of $\E[ |\tilde X_{h_l} - \tilde X_{h_{l-1}}|^2 ]$ in the cases 1,2 and 3 are parallel to a line of slope -2. We also see that the curves corresponding to the cases 2 and 3 are approximately similar and that for some value of $l$ those curves go below the one corresponding to $\E[ | X_{h_l} -  X_{h_{l-1}}|^2 ]$. The curve corresponding to the case 1 is always above all the other ones, this indicates that the L$^2$-error (or the variance) in the case 1 is too big (w.r.t the others) and that is why we do do not consider this case in the sequel. As $T$ increases (see figures \ref{variance_decay_20} and \ref{variance_decay_30}), the theoretical order of the numerical schemes is still observed. However, for $T=20$, a slight difference begin to emerge between the cases 2 and 3 (the case 3 being better) and this difference is accentuated for $T=30$ so that we do not represent the case 2. 

\begin{figure}[h]
  \begin{subfigure}{0.5\textwidth}
    \resizebox{\textwidth}{!}{\input{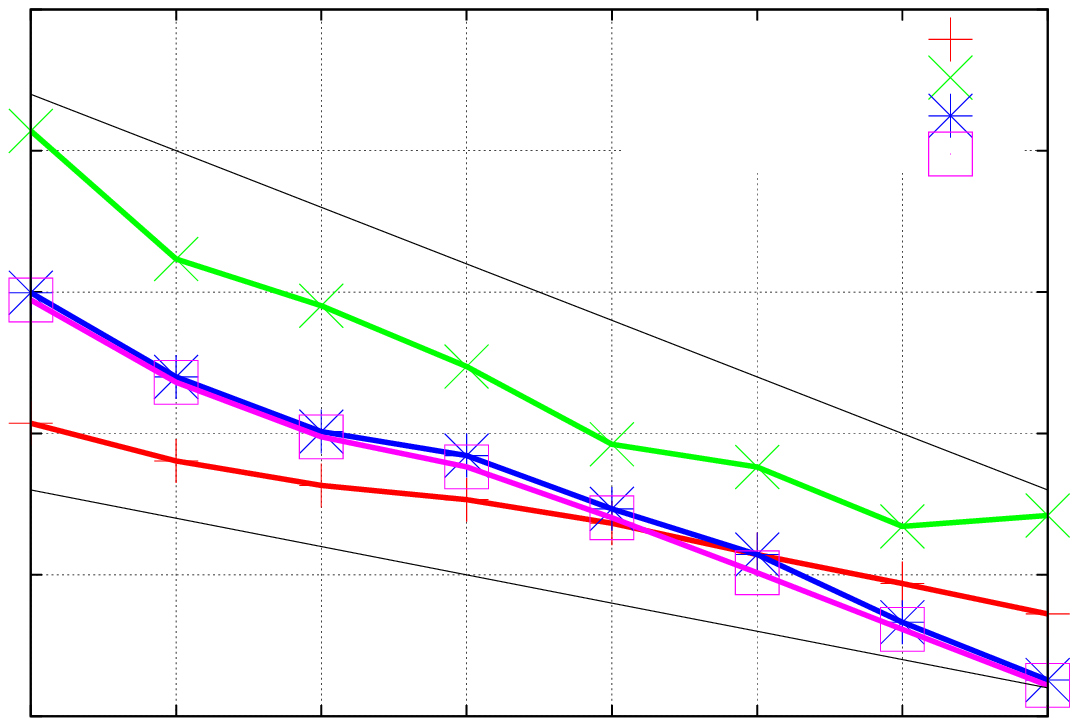}}
    \caption{T=10.}
    \label{variance_decay_10}
  \end{subfigure} 
  \begin{subfigure}{0.5\textwidth}
    \resizebox{\textwidth}{!}{\input{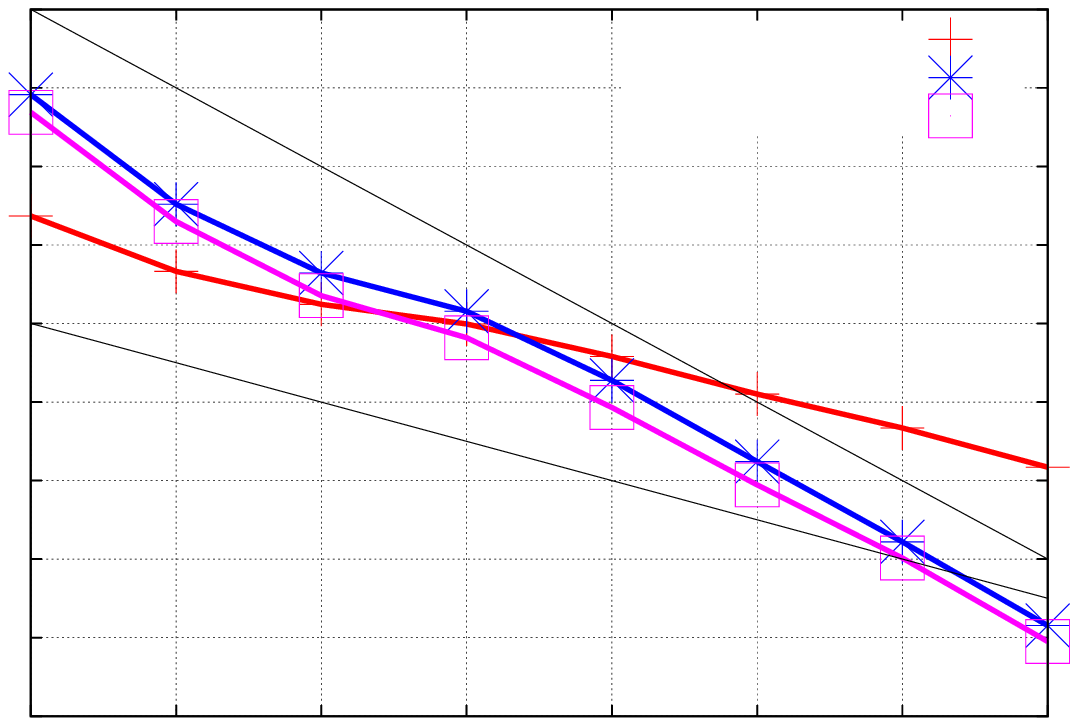}}
    \caption{T=20.}
    \label{variance_decay_20}
  \end{subfigure}
  \begin{subfigure}{0.5\textwidth}
    \resizebox{\textwidth}{!}{\input{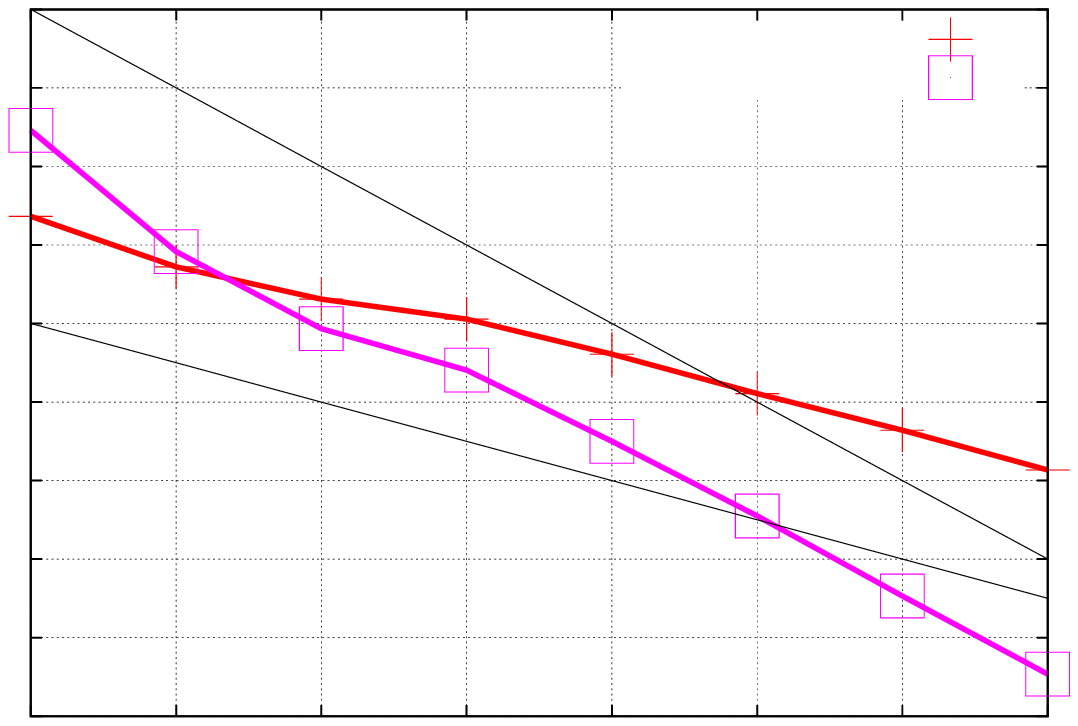}}
    \caption{T=30.}
    \label{variance_decay_30}
  \end{subfigure} 
  \caption{The plots (a),(b) and (c) show the decay of $\E [(X_{h_l} - X_{h_{l-1}})^2]$ and $\E[\tilde X_{h_l} - \tilde X_{h_{l-1}})^2]$ ($y$-axis, $\log_M$ scale) as a function of $l$ with $h_l = h \times M^{-(l-1)}$, $h=1$, $M=4$, for different values of the final time $T$. For visual guide, we added black solid lines with slopes -1 and -2.}
\label{variance_graph}
\end{figure}

\noindent For the Monte Carlo simulations we set $T=30$, $\lambda^*=10$ and the time step involved in the first level of the MLMC is set to $h^*= 0.1$. We choose this value for $h^*$ because it represents (on average) the size of an interval $[T^*_n,T^*_{n+1}]$ of two successive jump times of the auxiliary Poisson process $(N^*_t)$. The estimation of the true value and variance leads $\E[X] = -31.4723$ and Var$(X) = 335$. Note that $v(30) = -35.3083$ where $v$ is the deterministic membrane potential solution of \eqref{deter_ML} so that there is an offset between the deterministic potential and the mean of the stochastic potential. We replicate 100 times the simulation of the classical and multilevel estimators to compute the empirical RMSE so that $R=100$ in \eqref{empirical_RMSE}.

\begin{table}[h]
\begin{tabular}{|c|c|c|c|c|c||c|c|c|} 
\hline
$k$ & $\epsilon = 2^{-k}$ & $\widehat \epsilon_{100}$ & $\widehat b_{100}$ & $\widehat v_{100}$ & time (sec) & $N$ & $h$ & cost \\
\hline
1 & 5.00e-01 & 4.32e-01 & 2.34e-01 & 1.52e-01 & 3.10e-01 & 2.16e+03 & 6.30e-02 & 3.43e+04 \\
\hline
2 & 2.50e-01 & 2.59e-01 & 1.69e-01 & 3.87e-02 & 1.55e+00 & 8.47e+03 & 3.15e-02 & 2.69e+05\\
\hline
3 & 1.25e-01 & 1.17e-01 & 6.25e-02 & 9.78e-03 & 8.80e+00 & 3.34e+04 & 1.58e-02 & 2.12e+06\\
\hline
4 & 6.25e-02 & 5.67e-02 & 2.73e-02 & 2.47e-03 & 5.62e+01 & 1.32e+05 & 7.88e-03 & 1.68e+07\\
\hline
5 & 3.12e-02 & 2.50e-02 & -1.78e-03 & 6.21e-04 & 3.93e+02 & 5.24e+05 & 3.94e-03 & 1.33e+08\\
\hline
\end{tabular} 
\caption{Results and parameters of the Monte Carlo estimator $Y^{\text{MC}}$. Estimated values of the structural parameters: $c_1=4.58$, $V_1=7.25$.}
\label{MC1_results} 
\end{table}

\begin{table}[h]
\begin{tabular}{|c|c|c|c|c|c||c|c|c|c|c|} 
\hline
$k$ & $\epsilon = 2^{-k}$ & $\widehat \epsilon_{100}$ & $\widehat b_{100}$ & $\widehat v_{100}$ & time (sec) & $L$ & $M$ & $h$ & $N$ & cost \\
\hline
1 & 5.00e-01 & 3.89e-01 & 1.14e-01 & 1.38e-01 & 3.62e-01 & 2 & 2 & 0.1 & 2.60e+03 & 2.82e+04\\
\hline
2 & 2.50e-01 & 2.29e-01 & 1.19e-01 & 3.83e-02 & 1.44e+00 & 2 & 4 & 0.1 & 1.04e+04 & 1.16e+05\\
\hline
3 & 1.25e-01 & 1.21e-01 & 6.24e-02 & 1.07e-02 & 5.76e+00 & 2 & 7 & 0.1 & 4.22e+04 & 4.85e+05\\
\hline
4 & 6.25e-02 & 5.91e-02 & 1.38e-02 & 3.30e-03 & 2.69e+01 & 3 & 4 & 0.1 & 1.90e+05 & 2.37e+06\\
\hline
5 & 3.12e-02 & 3.47e-02 & -1.39e-02 & 1.01e-03 & 1.08e+02 & 3 & 6 & 0.1 & 7.71e+05 & 9.99e+06\\
\hline
\end{tabular} 
\caption{Results and parameters of the Multilevel Monte Carlo estimator $ Y^{\text{MLMC}}$. Estimated values of the structural parameters: $c_1=4.58$, $V_1=7.25$.}
\label{MLMC1_results} 
\end{table}

\begin{table}[h]
\begin{tabular}{|c|c|c|c|c|c||c|c|c|c|c|} 
\hline
$k$ & $\epsilon = 2^{-k}$ & $\widehat \epsilon_{100}$ & $\widehat b_{100}$ & $\widehat v_{100}$ & time (sec) & $L$ & $M$ & $h$ & $N$ & cost \\
\hline
1 & 5.00e-01 & 4.28e-01 & 1.98e-01 & 1.44e-01 & 3.13e-01 & 2 & 2 & 0.1 & 2.38e+03 & 2.50e+04\\
\hline
2 & 2.50e-01 & 2.47e-01 & 1.55e-01 & 3.72e-02 & 1.26e+00 & 2 & 3 & 0.1 & 9.46e+03 & 1.00e+05\\
\hline
3 & 1.25e-01 & 1.36e-01 & 8.90e-02 & 1.05e-02 & 5.00e+00 & 2 	& 6 & 0.1 & 3.80e+04 & 4.11e+05\\
\hline
4 & 6.25e-02 & 6.22e-02 & 2.15e-02 & 3.41e-03 & 2.09e+01 & 3 & 4 & 0.1 & 1.58e+05 & 1.75e+06\\
\hline
5 & 3.12e-02 & 3.17e-02 & 6.07e-03 & 9.71e-04 & 8.35e+01 & 3 & 5 & 0.1 & 6.30e+05 & 7.02e+06\\
\hline
\end{tabular} 
\caption{Results and parameters of the Multilevel Monte Carlo estimator $ \tilde Y^{\text{MLMC}}$ (case 3). Estimated values of the structural parameters: $\tilde c_1=3.91$, $\tilde V_1=34.1$.}
\label{MLMC2_results} 
\end{table}

\noindent The results of the Monte Carlo simulations are shown in tables \ref{MC1_results} for the classical Monte Carlo estimator $Y^{\text{MC}}$ and in tables \ref{MLMC1_results} and \ref{MLMC2_results} for the multilevel estimators $Y^{\text{MLMC}}$ and $\tilde Y^{\text{MLMC}}$ (case 3). As an example, the first line of table \ref{MLMC1_results} reads as follows: for a user prescribed $\epsilon= 2^{-1} = 0.5$, the MLMC estimator $Y^{\text{MLMC}}$ is implemented with $L = 2 $ levels, the time step at the first level is $h^* = 0.1$, this time step is refined by a factor $n_l = M^{l-1}$ with $M=2$ at each levels and the sample size is $N = 2600$. For such parameters, the numerical complexity of the estimator is $\text{Cost}(Y^{\text{MLMC}} ) = 28200$, the empirical RMSE $\widehat \epsilon_{100} = 0.389$ and the computational time of one realisation of $Y^{\text{MLMC}}$ is $0.362$ seconds. We also reported the empirical bias $\widehat b_{100}$ and the empirical variance $\widehat v_{100}$ in view of \eqref{empirical_RMSE}.

\noindent The results indicate that the MLMC outperforms the classical MC. More precisely, for small values of $\epsilon$ (i.e $k=1,2,3$) the complexity and the CPU-time of the classical and the multilevel MC estimators are of the same order. As $\epsilon$ decreases (i.e as $k$ increases) the difference in complexity and CPU-time between classical and multilevel MC increases. Indeed, for $k=5$ the complexity of the estimator $Y^{\text{MC}}$ is approximately 13 times superior to the one of $Y^{\text{MLMC}}$ and 19 times superior to the one of $\tilde Y^{\text{MLMC}}$. The same fact appears when we look at the complexity ratio of the estimators $Y^{\text{MLMC}}$ and $\tilde Y^{\text{MLMC}}$ (i.e Cost($Y^{\text{MLMC}}$)/Cost($\tilde Y^{\text{MLMC}}$)) as $\epsilon$ decreases. However, the difference between the complexity of these two MLMC estimators increases more slowly than the one between a MC and a MLMC estimator. 
Recall that the computational benefit of the MLMC over the MC grows as the prescribed $\epsilon$ decreases.

\noindent Both classical and multilevel estimators provide an empirical RMSE which is close to the prescribed precision (see tables \ref{MC1_results}, \ref{MLMC1_results} and \ref{MLMC2_results}). We can conclude that the choice of the parameters is well adapted. 
For the readability, figures \ref{ratio_cost}, \ref{ratio_time} show the ratios of the complexities and the CPU-times of the three estimators $Y^{\text{MC}}$, $Y^{\text{MLMC}}$ and $\tilde Y^{\text{MLMC}}$ as a function of $\epsilon$.

\begin{figure}[h]
	\begin{subfigure}{.5\linewidth}
    		\resizebox{\columnwidth}{!}{\input{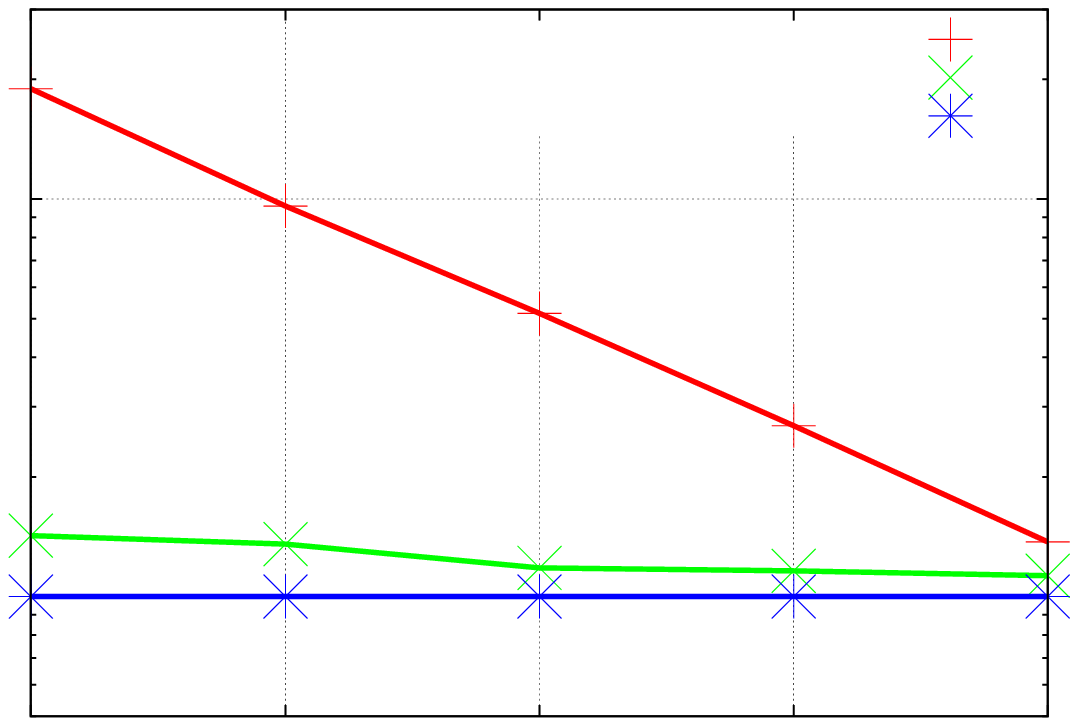}}
    		\caption{Ratio of the complexities.}
    	\label{ratio_cost}
    \end{subfigure}
    \begin{subfigure}{.5\linewidth}
    		\resizebox{\columnwidth}{!}{\input{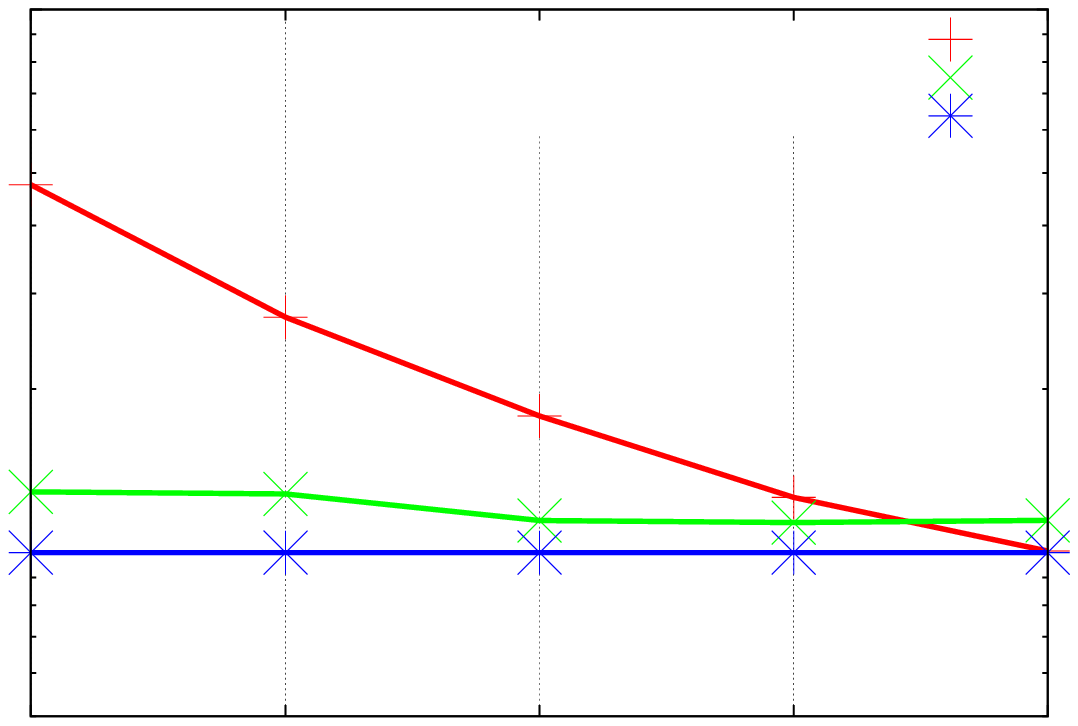}}
    		\caption{Ratio of the CPU-times.}
    	\label{ratio_time}
    \end{subfigure}
\caption{The plots (a) and (b) show the complexity and CPU-time ratios w.r.t the complexity and CPU-time of the estimator $\tilde Y^{\text{MLMC}}$ as a function of the prescribed $\epsilon$ ($\log_2$ scale for the $x$-axis, $\log$ scale for the $y$-axis).}
\end{figure} 

\bibliographystyle{plain}
\bibliography{bibMLMC}

\begin{thebibliography}{10}

\bibitem{AnHi12}
D.F. Anderson and D.J. Higham.
\newblock Multilevel {M}onte {C}arlo for continuous time {M}arkov chains, with
  applications in biochemical kinetics.
\newblock {\em Multiscale Model. Simul.}, 10(1):146--179, 2012.

\bibitem{AnHiSu14}
D.F. Anderson, D.J. Higham, and Y.~Sun.
\newblock Complexity of {M}ultilevel {M}onte {C}arlo tau-leaping.
\newblock {\em SIAM Journal on Numerical Analysis}, 52(6):3106--3127, 2014.

\bibitem{benaim2012}
M.~Benaïm, S.~Le~Borgne, F.~Malrieu, and P-A. Zitt.
\newblock Quantitative ergodicity for some switched dynamical systems.
\newblock {\em Electron. Commun. Probab.}, 17:14 pp., 2012.

\bibitem{Bremaud}
P.~Brémaud.
\newblock {\em {P}oint {P}rocesses and {Q}ueues, {M}artingale {D}ynamics}.
\newblock {S}pringer-{V}erlag {N}ew {Y}ork {I}nc, 1981.

\bibitem{davarticle}
M.H.A. Davis.
\newblock Piecewise-deterministic {M}arkov processes: A general class of
  non-diffusion stochastic models.
\newblock {\em Journal of the Royal statistical Society}, 46:353--388, 1984.

\bibitem{dav}
M.H.A. Davis.
\newblock {\em Markov Models and Optimization}.
\newblock Chapman and Hall, {L}ondon, 1993.

\bibitem{Der11}
S.~Dereich.
\newblock Multilevel {M}onte {C}arlo algorithms for {L}évy-driven {S}{D}{E}s
  with {G}aussian correction.
\newblock {\em The Journal of Applied Probability}, 21(1):283--311, 2011.

\bibitem{DH11}
S.~Dereich and F.~Heidenreich.
\newblock A {M}ultilevel {M}onte {C}arlo algorithm for {L}évy-driven
  {S}tochastic {D}ifferential {E}quations.
\newblock {\em Stochastic Processes and their Applications}, 121(7):1565--1587,
  2011.

\bibitem{dev}
L.~Devroye.
\newblock {\em Non-uniform random variate generation}.
\newblock Springer-Verlag, New York Inc., 1986.

\bibitem{kyp2014}
A.~Ferreiro-Castilla, A.E. Kyprianou, R.~Scheichl, and G.~Suryanarayana.
\newblock Multilevel {M}onte {C}arlo simulation for {L}évy processes based on
  the {W}iener–{H}opf factorisation.
\newblock {\em Stochastic Processes and their Applications}, 124(2):985 --
  1010, 2014.

\bibitem{giles_2015}
M.~B. Giles.
\newblock Multilevel {M}onte {C}arlo methods.
\newblock {\em Acta Numerica}, 24:259–328, 2015.

\bibitem{Gi08}
M.B. Giles.
\newblock {Multilevel Monte Carlo path simulation}.
\newblock {\em Oper. Res.}, 56(3):607--617, 2008.

\bibitem{Giorgi}
D.~Giorgi.
\newblock {\em Théorémes limites pour estimateurs {M}ultilevel avec et sans
  poids. Comparaisons et applications.}
\newblock PhD thesis, Université {P}ierre et {M}arie {C}urie - {P}aris 6,
  2017.

\bibitem{GlRhot14}
P.W. Glynn and C-H. Rhee.
\newblock Exact estimation for markov chain equilibrium expectations.
\newblock {\em Journal of Applied Probability}, 51:377--389, 2014.

\bibitem{RhGl15}
P.W. Glynn and C-H. Rhee.
\newblock Unbiased {E}stimation with {S}quare {R}oot {C}onvergence for
  {S}{D}{E} models.
\newblock {\em Operations Research}, 63(5):1026--1043, 2015.

\bibitem{talay}
C.~Graham and D.~Talay.
\newblock {\em Stochastic simulation and {M}onte {C}arlo}.
\newblock Springer, 2013.

\bibitem{ODE}
E.~Hairer, S.P. Norsett, and G.~Wanner.
\newblock {\em Solving Ordinary Differential Equations I}.
\newblock Second {R}evised {E}dition, Springer, 2008.

\bibitem{Hei01}
S.~Heinrich.
\newblock Multilevel {M}onte {C}arlo methods.
\newblock {\em Large-scale scientific computing}, pages 58--67, 2001.

\bibitem{hodgkin}
A.L. Hodgkin and A.F. Huxley.
\newblock A quantitative description of membrane current and its application to
  conduction and excitation in nerve.
\newblock {\em Journal of Physiology}, 117:500--544, 1952.

\bibitem{jacobsen}
M.~Jacobsen.
\newblock {\em Point {P}rocess {T}heory and {A}pplications, {M}arked {P}oint
  and {P}iecewise {D}eterministic {P}rocesses}.
\newblock {B}irkhäuser {B}oston, 2006.

\bibitem{LP}
V.~Lemaire and G.~Pagès.
\newblock Multilevel {R}ichardson-{R}omberg extrapolation.
\newblock {\em Bernoulli}, 23(4A):2643--2692, 2017.

\bibitem{thinPDMP}
V.~Lemaire, M.~Thieullen, and N.~Thomas.
\newblock Exact simulation of the jump times of a class of {P}iecewise
  {D}eterministic {M}arkov {P}rocesses.
\newblock {\em Journal of {S}cientific {C}omputing}, 75:1776 -- 1807, 2018.

\bibitem{lecar}
C.~Morris and H.~Lecar.
\newblock Voltage oscillations in the barnacle giant muscle fiber.
\newblock {\em Biophysical Journal}, 35:193--213, 1981.

\bibitem{pages2018}
G.~Pagès.
\newblock {\em Numerical {P}robability: {A}n {I}ntroduction with {A}pplications
  to {F}inance}.
\newblock Universitext, {S}pringer, {C}ham, 2018.

\bibitem{wainfluid}
K.~Pakdaman, M.~Thieullen, and G.~Wainrib.
\newblock Fluid limit theorems for stochastic hybrid systems with application
  to neuron models.
\newblock {\em Advances in Applied Probability}, 42(3):761–794, 2010.

\bibitem{palmowski2002}
Z.~Palmowski and T.~Rolski.
\newblock A technique for exponential change of measure for {M}arkov processes.
\newblock {\em Bernoulli}, 8(6):767--785, 12 2002.

\bibitem{TaTu90}
D.~Talay and L.~Tubaro.
\newblock Expansion of the global error for numerical schemes solving
  stochastic differential equations.
\newblock {\em Stochastic Analysis and Applications}, 8(4):483--509, 1990.

\bibitem{Xia}
Y.~Xia and M.B. Giles.
\newblock Multilevel {P}ath {S}imulation for {J}ump-{D}iffusion {S}{D}{E}s.
\newblock In Leszek Plaskota and Henryk Wo{\'{z}}niakowski, editors, {\em Monte
  Carlo and Quasi-Monte Carlo Methods 2010}, pages 695--708, Berlin,
  Heidelberg, 2012. Springer Berlin Heidelberg.

\end{thebibliography}

\end{document}